\numberwithin{equation}{section}
\numberwithin{equation}{section}
\numberwithin{figure}{section}
\theoremstyle{plain}
\newtheorem{thm}{\protect\theoremname}
\theoremstyle{definition}
\theoremstyle{plain}
\newtheorem{cor}[thm]{\protect\corollaryname}
\theoremstyle{remark}
\newtheorem{rem}[thm]{\protect\remarkname}
\theoremstyle{plain}
\newtheorem{lem}[thm]{\protect\lemmaname}
\theoremstyle{plain}
\newtheorem{prop}[thm]{\protect\propositionname}
\theoremstyle{definition}
\newtheorem{example}[thm]{\protect\examplename}
\theoremstyle{remark}
\newtheorem*{claim*}{\protect\claimname}
\providecommand{\claimname}{Claim}
\providecommand{\corollaryname}{Corollary}
\providecommand{\definitionname}{Definition}
\providecommand{\examplename}{Example}
\providecommand{\lemmaname}{Lemma}
\providecommand{\propositionname}{Proposition}
\providecommand{\remarkname}{Remark}
\providecommand{\theoremname}{Theorem}
\newif\ifdraft\drafttrue
\newcommand\eq[2]{{\ifdraft{\ \tt [#1]}\else\ignorespaces\fi}\begin{equation}\label{#1}{#2}\end{equation}}
\newcommand {\equ}[1]{\eqref{#1}}
\newcommand{\df}{{\, \stackrel{\mathrm{def}}{=}\, }}
\begin{document}
\title{Random walks on tori and normal numbers in self-similar sets\label{chap:Random-walks}}
\author{Yiftach Dayan, Arijit Ganguly and Barak Weiss}
\begin{abstract}
We study random walks on a $d$-dimensional torus by affine expanding
maps whose linear parts commute. Assuming an irrationality
condition on their translation parts, we prove that the Haar measure
is the unique stationary measure. We deduce that if $K
\subset \mathbb{R}^d$ is an attractor of a finite iterated function
system of $n\geq 2$  
maps of the form $x \mapsto D^{-1} x + t_i \ (i=1, \ldots,
n)$, where
$D$ is an expanding $d\times d$ integer matrix, and is the same for all the maps,  
under an irrationality condition on
the translation parts $t_i$, almost every point in
$K$ (w.r.t. any Bernoulli measure) has an equidistributed
orbit under the map $x\mapsto Dx$ (multiplication mod $\mathbb{Z}^{d}$). In the
one-dimensional case, this conclusion 
amounts to normality to base $D$. Thus for example, almost every point in an
irrational dilation of 
the middle-thirds Cantor set is normal to base 3.
\end{abstract}

\maketitle

\section{Introduction\label{sec:Introduction}}

In this paper we will analyze random walks on a torus. For some random
walks, driven by finitely many expanding affine maps of the torus we will show
that the only stationary measure is the Haar measure, and hence for
any starting point, almost every random trajectory is equidistributed.
%By an affine map of a torus we mean a map of the form $x \mapsto D
%(x) + \alpha$, where $D$ is a torus 
%endomorphism and addition signifies the group operation on the
%torus.
We will use the random walks results to obtain new results on
digital expansion of typical points in certain self-affine sets. This paper
follows a scheme similar to that of \cite{SimmonsWeiss2019}, where related results
about random walks on homogeneous spaces were proved, leading to
results on Diophantine properties of typical points on self-similar
sets. 

\subsection{Random walks on tori}
Informally, a random walk on a torus may be described as follows.
Suppose $G$ is a semigroup acting on the torus and $\mu$ is some
probability measure on $G$. Given a point $x$ in the torus, the
random walk proceeds by sampling a random element $g\in G$ according
to $\mu$ and moving the point $x$ to $gx$. The process continues
indefinitely to obtain an infinite random path in the torus.

More formally, let $G$ be a second countable locally compact semigroup
acting on $\mathbb{T}^{d} \df \mathbb{R}^d/\mathbb{Z}^d$ and let $\mu$
be some Borel probability 
measure on $G$. To this system we associate a Bernoulli shift $\left(B,\beta,\mathcal{B},T\right)$,
where $B=G^{\mathbb{N}}$, $\beta=\mu^{\otimes\mathbb{N}}$ is the
product measure on $B$, $\mathcal{B}$ is the Borel $\sigma$-algebra
on $B$ and $T$ is the left shift. For a measure $\nu$ on
$\mathbb{T}^{d}$, the convolution 
of $\mu$ with $\nu$ is the measure on $\mathbb{T}^{d}$ which is
given by
\[
\mu*\nu\left(A\right)=\int\limits _{G}g_{*}\nu\left(A\right)d\mu\left(g\right),
\]
for every measurable set $A\subseteq\mathbb{T}^{d}$. A probability
measure $\nu$ for which $\mu*\nu = \nu$ is called {\em
  $\mu$-stationary.}

Clearly, every $G$-invariant measure is $\mu$-stationary, but the
converse is often false. The action is called {\em stiff} if any
$\mu$-stationary measure is invariant (this terminology was introduced
by Furstenberg in \cite{Furstenberg_stiffness}). Recently there have
been several breakthroughs 
for the case where $G$ acts on $\mathbb{T}^{d}$ by linear automorphisms.
Starting with the work of Bourgain, Furman, Lindenstrauss
and Mozes \cite{bourgain2011stationary}, followed by a series
of papers by Benoist and Quint \cite{BenoistQuint2009mesures,
  BenoistQuint2013stationaryII,BenoistQuint2013stationaryIII}, 
these results gave certain conditions guaranteeing stiffness.

In this
paper we establish stiffness for certain random walks generated
by {\em affine toral endormorphisms}, namely maps $\mathbb{T}^d \to
\mathbb{T}^d$ of the form $x \mapsto D(x) + \alpha$, where $D$ is a
linear toral endomorphism, $\alpha \in \mathbb{T}$ and addition is the
group law on $\mathbb{T}^d$. Moreover we show that these stiff random walks
have a unique invariant measure. 
We recall that a toral endomorphism is a map $\mathbb{T}^d \to \mathbb{T}^d
$ of the form $x \mapsto Dx \  (\mathrm{mod}\  \mathbb{Z}^d)$, where $D$ is a
matrix with integer coefficients.  In this paper we will use the same letter
to denote  a toral
endomorphism and the corresponding integer matrix. We denote the
identity $d \times d$ matrix by $ \mathbb{I}_{d}$. 
We call a  matrix {\em expanding} if all of
its (complex) eigenvalues have modulus greater than 1. 
\begin{thm}
\label{thm:unique stationary measures T^n}Let $D_{1},...,D_{n}$
be commuting $d\times d$ matrices with coefficients in $\mathbb{Z}$.
Assume that all the $D_{i}$ are expanding. For $i=1, \ldots, n$, let
$\alpha_{i}\in\mathbb{R}^{d}$ and let
$$h_i: \mathbb{T}^{d}\to\mathbb{T}^{d}, \ \
h_{i}\left(x\right)=D_{i}(x)+\alpha_{i}
.$$ 
Assume that
\[
\left\{
  \left(\mathbb{I}_{d}-D_{i}\right)\alpha_{j}-\left(\mathbb{I}_{d}-D_{j}\right)
  \alpha_{i}:\,i,j\in\left\{
    1,...,n\right\} \right\}  
\]
is not contained in any proper closed subgroup of
$\mathbb{T}^{d}$. Let $\mu$ be a probability measure such that
$\mathrm{supp} \, \mu = \left\{ h_{1},...,h_{n}\right\} $. Then Haar
measure is the unique $\mu$-stationary 
measure on $\mathbb{T}^{d}$. 
\end{thm}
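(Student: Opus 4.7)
The plan is to use Fourier analysis on $\mathbb{T}^{d}$, combined with a maximum-modulus argument that exploits the commutativity of the $D_{i}$. Let $\nu$ be a $\mu$-stationary probability measure and put $p_{i}=\mu(\{h_{i}\})$. Since $\widehat{(h_{i})_{*}\nu}(\xi)=e^{-2\pi i\langle\xi,\alpha_{i}\rangle}\hat\nu(D_{i}^{T}\xi)$ for $\xi\in\mathbb{Z}^{d}$, the relation $\nu=\sum_{i}p_{i}(h_{i})_{*}\nu$ transforms into the functional equation
$$\hat\nu(\xi) \;=\; \sum_{i=1}^{n}p_{i}\,e^{-2\pi i\langle\xi,\alpha_{i}\rangle}\,\hat\nu(D_{i}^{T}\xi)\qquad(\xi\in\mathbb{Z}^{d}).\qquad(\ast)$$
Since Haar measure is readily seen to be $\mu$-stationary, it suffices to show that $\hat\nu(\xi)=0$ for every $\xi\neq 0$.

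I would proceed by a maximum-modulus argument. Set $M=\sup_{\xi\in\mathbb{Z}^{d}\setminus\{0\}}|\hat\nu(\xi)|$ and assume, for contradiction, that $M>0$. Suppose first that $M$ is attained at some $\xi_{0}\neq 0$, and let $S=\{\xi\neq 0:|\hat\nu(\xi)|=M\}$. At each $\xi\in S$, saturation of the triangle inequality in $(\ast)$ forces (i) $|\hat\nu(D_{i}^{T}\xi)|=M$ for every $i$, so that $D_{i}^{T}S\subseteq S$, and (ii) the summands on the right of $(\ast)$ share a common argument. Writing $\hat\nu(\xi)=Me^{i\theta(\xi)}$ on $S$, condition (ii) reads $\theta(D_{i}^{T}\xi)-\theta(\xi)\equiv 2\pi\langle\xi,\alpha_{i}\rangle\pmod{2\pi}$. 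Chaining this identity once along $D_{i}^{T}D_{j}^{T}\xi$ and once along $D_{j}^{T}D_{i}^{T}\xi$, and using $D_{i}D_{j}=D_{j}D_{i}$ to equate the two left-hand sides, yields for all $\xi\in S$ and all $i,j$:
$$\langle\xi,\;(\mathbb{I}_{d}-D_{i})\alpha_{j}-(\mathbb{I}_{d}-D_{j})\alpha_{i}\rangle\in\mathbb{Z}.$$
By Pontryagin duality, the hypothesis that the set $\{(\mathbb{I}_{d}-D_{i})\alpha_{j}-(\mathbb{I}_{d}-D_{j})\alpha_{i}\}$ is not contained in any proper closed subgroup of $\mathbb{T}^{d}$ is equivalent to the triviality of its annihilator in $\mathbb{Z}^{d}$; hence $\xi_{0}=0$, a contradiction.

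The principal obstacle is that $M$ need not be attained on the infinite set $\mathbb{Z}^{d}\setminus\{0\}$. For a maximizing sequence with $|\hat\nu(\xi_{k})|\to M$, expansion of $D_{i}^{T}$ forces $|\xi_{k}|\to\infty$; the inequality $|\hat\nu(\xi_{k})|\le\sum_{i}p_{i}|\hat\nu(D_{i}^{T}\xi_{k})|$ together with $\limsup_{|\eta|\to\infty}|\hat\nu(\eta)|\le M$ then forces $|\hat\nu(D_{i}^{T}\xi_{k})|\to M$ for each $i$, and the phase-alignment reasoning yields $\langle\xi_{k},\gamma_{ij}\rangle\to 0\pmod{\mathbb{Z}}$, where $\gamma_{ij}=(\mathbb{I}_{d}-D_{i})\alpha_{j}-(\mathbb{I}_{d}-D_{j})\alpha_{i}$. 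This asymptotic alone is too weak, since integer sequences can simultaneously approximate arbitrary irrational phases. To close the gap I would quantify the decay: iterating $(\ast)$ $N$ times and, by commutativity, grouping words by their type so that two permutations of a single type differ in phase by $\langle\xi,D_{K}\gamma_{ij}\rangle$ for various multi-indices $K$, one should obtain via Weyl-type equidistribution a strict bound $|\hat\nu(\xi)|\le(1-\delta_{N}(\xi))M$ with $\delta_{N}(\xi)$ controlled uniformly along the problematic sequences. Combining this with a compactness argument on integer translates of $\nu$ should rule out the non-attained case. This quantitative step is where I expect the principal technical effort to lie.
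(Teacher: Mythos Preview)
Your Fourier-analytic approach is quite different from the paper's, and your treatment of the case where $M=\sup_{\xi\neq 0}|\hat\nu(\xi)|$ is attained is clean and correct: the phase-alignment/commutator calculation leading to $\langle\xi_{0},\gamma_{ij}\rangle\in\mathbb{Z}$ is exactly the right observation, and Pontryagin duality finishes that case.

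The gap lies precisely where you say it does, and it is a genuine one. In fact your own argument shows that when $M>0$ the supremum is \emph{never} attained (attainment at $\xi_{0}\neq 0$ already forces $\xi_{0}=0$), so the non-attained case is the entire content of the theorem, not a technical afterthought. Your proposed repair is not clearly workable. Grouping the $N$-fold iterate of $(\ast)$ by type gives $|\hat\nu(\xi)|\le\big(\sum_{\tau}|c_{\tau}(\xi)|\big)M$ with $c_{\tau}(\xi)=\sum_{w\in\tau}p_{w}e^{-2\pi i\phi_{w}(\xi)}$; for fixed $N$ this factor is $<1$ for each nonzero $\xi$, but it is not bounded away from $1$ uniformly in $\xi\in\mathbb{Z}^{d}$, because the fractional parts of $\langle\xi,D_{K}\gamma_{ij}\rangle$ can all be made arbitrarily small along suitable integer sequences $\xi$. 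Letting $N=N(\xi)$ grow and invoking equidistribution of $\{D_{K}\gamma_{ij}:|K|\le N\}$ in $\mathbb{T}^{d}$ would require control of the \emph{orbit} of the $\gamma_{ij}$ under the semigroup $\langle D_{1},\ldots,D_{n}\rangle$, whereas the hypothesis constrains only the $\gamma_{ij}$ themselves. The paper does prove something about these orbits, but only an almost-sure statement along random words, not the uniform-in-$\xi$ cancellation your outline would need. The appeal to ``compactness on integer translates of $\nu$'' is also unclear: along $\xi_{k}\to\infty$ there is no natural limiting measure in sight whose Fourier coefficients would inherit the maximizing property.

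The paper avoids this difficulty by a different mechanism. Rather than Fourier coefficients, it uses Furstenberg's limit measures $\nu_{b}=\lim_{m\to\infty}(b_{1}\circ\cdots\circ b_{m})_{*}\nu$, which exist for $\beta$-a.e.\ $b$ and satisfy $\nu=\int_{B}\nu_{b}\,d\beta$. The same commutator identity you found on phases is written at the level of maps as
\[
h_{j_{1}}\circ\cdots\circ h_{j_{m}}\circ h_{\ell}\circ h_{s}=R_{\mathbf{a}_{m}}\circ h_{j_{1}}\circ\cdots\circ h_{j_{m}}\circ h_{s}\circ h_{\ell},\qquad \mathbf{a}_{m}=D_{j_{1}}\cdots D_{j_{m}}\gamma_{\ell s}.
\]
Combined with Furstenberg's property that appending any fixed word on the right does not change $\nu_{b}$, this shows $\nu_{b}$ is invariant under rotation by every accumulation point of $(\mathbf{a}_{m})_{m}$. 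A separate lemma (using that the $D_i$ are expanding and a random-walk argument on the torus) shows that for $\beta$-a.e.\ $b$ these accumulation points generate a dense subgroup of $\mathbb{T}^{d}$, whence $\nu_{b}=\mathrm{Haar}$ and therefore $\nu=\mathrm{Haar}$. The passage to infinity is thus absorbed into the martingale convergence that produces $\nu_{b}$, and no maximum-modulus or uniformity-in-$\xi$ estimate is required.
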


The uniqueness property of a stationary measure for a random
walk has strong consequences. Indeed, using Breiman's
law of large numbers (\cite{Breiman1960}, see also \cite[Chap. 
2.2]{Benoist-QuintBook}), 
in the setting of Theorem \ref{thm:unique stationary measures T^n},
one obtains that for every $x\in\mathbb{T}^{d}$ and every $\varphi\in
C\left(\mathbb{T}^{d}\right)$, 
for $\beta$ - a.e. $b\in B$, 
\[
{\displaystyle \dfrac{1}{N}\sum_{k=0}^{N-1}\varphi\left(b_{k}\cdots
    b_{1}x\right) \underset{{\scriptstyle
      N\to\infty}}{\longrightarrow} \int\limits
  _{\mathbb{T}^{d}}\varphi \,  d \text{Haar}}.
\]
%The convergence also takes place in $L_{1}\left(B,\beta\right)$,
%uniformly in \textbf{$x$} (see \cite[section 2.2]{Benoist-QuintBook}).
By a standard argument using the separability of the space
$C\left(\mathbb{T}^{d}\right)$, we obtain that for every starting
point, a.e. trajectory is uniformly distributed. That is: 
\begin{cor}
\label{cor:a.s. equidistribution of trajecotries}For every $x\in\mathbb{T}^{d}$,
for $\beta$-a.e. $b\in B$,\textup{ 
\[
{\displaystyle \dfrac{1}{N}\sum_{k=0}^{N-1}\delta_{b_{k}\cdots b_{1}x}\underset{{\scriptstyle N\to\infty}}{\longrightarrow}\text{Haar}}
\]
} in the weak-{*} topology. 
\end{cor}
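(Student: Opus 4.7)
The plan is to promote the pointwise convergence statement displayed above (which holds, for each fixed $\varphi$, on a $\varphi$-dependent set of full $\beta$-measure) to a single statement about weak-$*$ convergence of empirical measures, holding on a single full-measure set of $b$'s. The mechanism is the separability of $C(\mathbb{T}^{d})$ (in the sup-norm), combined with the fact that each empirical measure and Haar measure are both probabilities, so that uniform approximation of test functions transfers integrals with a uniform error.

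Concretely, I would fix $x\in\mathbb{T}^{d}$ and begin by choosing a countable $\|\cdot\|_{\infty}$-dense subset $\{\varphi_{n}\}_{n\in\mathbb{N}}\subset C(\mathbb{T}^{d})$ (e.g.\ trigonometric polynomials with rational coefficients). For each $n$, the displayed consequence of Breiman's law applied to $\varphi_{n}$ provides a $\beta$-null set $E_{n}\subset B$ such that for $b\notin E_{n}$,
\[
\frac{1}{N}\sum_{k=0}^{N-1}\varphi_{n}(b_{k}\cdots b_{1}x)\underset{N\to\infty}{\longrightarrow}\int_{\mathbb{T}^{d}}\varphi_{n}\,d\text{Haar}.
\]
Setting $E\df\bigcup_{n}E_{n}$, we still have $\beta(E)=0$, and for every $b\notin E$ the above convergence holds simultaneously for all $n$.

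Next I would upgrade to arbitrary $\varphi\in C(\mathbb{T}^{d})$. Denote the empirical measure $\nu_{N}^{b}\df\frac{1}{N}\sum_{k=0}^{N-1}\delta_{b_{k}\cdots b_{1}x}$. Given $\varepsilon>0$, pick $\varphi_{n}$ with $\|\varphi-\varphi_{n}\|_{\infty}<\varepsilon$. Since $\nu_{N}^{b}$ and Haar measure are probability measures,
\[
\left|\int\varphi\,d\nu_{N}^{b}-\int\varphi_{n}\,d\nu_{N}^{b}\right|<\varepsilon \quad\text{and}\quad \left|\int\varphi\,d\text{Haar}-\int\varphi_{n}\,d\text{Haar}\right|<\varepsilon.
\]
Combined with the convergence for $\varphi_{n}$, this yields for $b\notin E$ the bound $\limsup_{N}|\int\varphi\,d\nu_{N}^{b}-\int\varphi\,d\text{Haar}|\le 2\varepsilon$; letting $\varepsilon\to 0$ gives convergence of integrals against $\varphi$, hence $\nu_{N}^{b}\to\text{Haar}$ in the weak-$*$ topology.

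There is essentially no serious obstacle: the entire content is the standard quantifier-exchange trick (first get a full-measure set per test function, then separability lets one intersect countably many of these). The only point requiring a line of care is the uniform-in-$N$ transfer from $\varphi_{n}$ to $\varphi$, which is immediate because all measures involved have total mass one.
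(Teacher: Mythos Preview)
Your proposal is correct and follows exactly the approach indicated in the paper: the paper simply remarks that the corollary follows from the displayed Breiman consequence ``by a standard argument using the separability of the space $C(\mathbb{T}^{d})$,'' and what you have written is precisely that standard argument spelled out in detail.
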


\subsection{Normal numbers in self similar sets}

\subsubsection{Iterated function systems and their attractors \label{subsec:Iterated-function-systems}}
In general, an iterated function system (IFS) is a finite collection
of maps $\left\{ \varphi_{i}\right\} _{i\in\Lambda}$ on a complete
metric space $\left(X,d\right)$, where for each $i\in\Lambda$, $\varphi_{i}:X\to X$
is contracting, i.e. for some $\rho\in\left(0,1\right)$, for every
$x,y\in X$, 
\[
d\left(\varphi_{i}x,\varphi_{i}y\right)\leq\rho\cdot d\left(x,y\right).
\]

For every such IFS there is a unique non-empty compact set $K\subseteq\mathbb{R}^{d}$
which satisfies 
\[
K=\bigcup\limits _{i\in\Lambda}\varphi_{i}(K),
\]
and is called the {\em attractor} of the IFS.

In this paper, we specialize to IFSs of $\mathbb{R}^{d}$ that 
consist of affine contractions. That is, for each $i\in\Lambda$,
$\varphi_{i}:\mathbb{R}^{d}\to\mathbb{R}^{d}$ is given by $\varphi_{i}\left(x\right)=A_{i}x+\alpha_{i}$,
where $A_{i}$ is a $d\times d$ matrix, and with respect to some
norm $\left\Vert \cdot\right\Vert $ on $\mathbb{R}^{d}$, for every
$i\in\Lambda$,
\[
\sup_{\begin{array}{c}
 x\in\mathbb{R}^{d}\setminus\{0\}\end{array}}\frac{\left\Vert A_{i}x\right\Vert }{\left\Vert x\right\Vert }<1,
\]
and $\alpha_{i}\in\mathbb{R}^{d}$ is called the {\em translation
of $\varphi_{i}$}. We will refer to an attractor of such an IFS
as a {\em self-affine set}. An important special case is when the
maps $\varphi_{i}$ are all similarity functions, i.e. they are given
by $\varphi_{i}\left(x\right)=r_{i}\cdot O_{i}x+\alpha_{i}$, where
$r_{i}\in\left(0,1\right)$ is called the {\em contraction ratio of $\varphi_{i}$}, $O_{i}$ is an orthogonal map and $\alpha_{i}\in\mathbb{R}^{d}$.
In this case, the attractor is called a \emph{self-similar set}.

Every point in the attractor $K$ of an IFS $\Phi=\left\{ \varphi_{i}\right\} _{i\in\Lambda}$
has a symbolic coding (possibly more than one), given by the so called
coding map $\pi_{\Phi}:\Lambda^{\mathbb{N}}\to K$, which may be defined
by 
\[
\forall i=\left(i_{1},i_{2},...\right)\in\Lambda^{\mathbb{N}},\,\,\pi_{\Phi}\left(i\right)=\lim_{n\rightarrow\infty}\varphi_{i_{1}}\circ\varphi_{i_{2}}\circ\cdots\circ\varphi_{i_{n}}(x_{0}),
\]
where $x_{0}$ is some arbitrary basepoint. It will be convenient
for us to choose $x_{0}=0$. For an introduction see \cite{falconer2013fractal}.

We will say that a measure on $K$ is a {\em Bernoulli measure}
if it is obtained by pushing forward a Bernoulli measure on the symbol
space $\Lambda^{\mathbb{N}}$ by the coding map $\pi_{\Phi}$. In
other words, the measure is of the form $\left(\pi_{\Phi}\right)_{*}P^{\mathbb{\otimes N}}$,
where $P$ is a probability measure on the finite set $\Lambda$ and
$P^{\mathbb{\otimes N}}$ is the product measure on $\Lambda^{\mathbb{N}}$.
Throughout this text, we assume that $P\left(\left\{ i\right\} \right)>0$
for every $i\in\Lambda$ (otherwise we can replace $\Lambda$ with
$\text{supp}\left(P\right)$). This definition depends on the underlying
IFS (rather than its attractor), and thus we shall sometimes refer
to a measure as a {\em $\Phi$-Bernoulli measure}, where $\Phi$
indicates the IFS.

\subsubsection{Normal numbers}
Let $D \geq 2$ be an integer. 
Recall that $x\in\mathbb{R}$ is called {\em normal to base $D$} if for
every $n\in\mathbb{N}$, every finite 
word $\omega\in\left\{ 0,...,D-1\right\} ^{n}$ occurs in the base
$D$ digital expansion of $x$ with asymptotic frequency $D^{-n}$.
Equivalently, $x\in\mathbb{R}$ is normal to base $D$ iff the forward
orbit of $x$ under the map $x\mapsto Dx$ (multiplication by $D$ modulo
1) is equidistributed
w.r.t. Lebesgue measure in $\left[0,1\right]$. A detailed exposition
on normal numbers, and in particular for the equivalence
stated above, may be found in \cite{bugeaud2012distribution}. One
useful property of normal numbers is that a number $x\in\mathbb{R}$
is normal to some base $D$ iff for every $s,t\in\mathbb{Q}$ s.t.
$s\neq0$, $sx+t$ is normal to base $D$ (this property was proved
by Wall in his Ph.D. thesis \cite{Wall1950}).

Since the map $x\mapsto Dx$ is ergodic (w.r.t. Lebesgue
measure on $\left[0,1\right]$), by Birkhoff's ergodic theorem,
a.e. real number is normal to every integer base\footnote{This fact
  was first proved by $\acute{\text{E}}$. Borel in 1909 without 
using ergodic theory.}. Focusing attention to self-similar
sets, one may inquire as to the size of the set of all numbers within
some self similar 
set that are normal to a given base. It was proved in
\cite{BBFKW} that the set 
of real numbers which are not normal to any integer base (these numbers
are called \emph{absolutely non-normal}) intersects any infinite self-similar
$K \subseteq \mathbb{R}$, in a set whose Hausdorff dimension is equal
to that of $K$. This extends
a result of Schmidt \cite{Schmidt1966}, which
provides the same conclusion for $K=\left[0,1\right]$, to nice self-similar
sets.

On the other hand, in many cases, with respect to natural measures
supported on self-similar sets in $\mathbb{R}$, almost every number
is normal to a given base $D$. Of course this is not the case for every
self-similar set and every base. For example, no number in the middle-thirds
Cantor set is normal to base 3. Several positive results in this
direction were obtained in \cite{cassels1959, schmidt1960,
  FeldmanSmorodinsky1992, Host1995, HochmanShmerkin2015}. In all of these
papers, some independence is assumed to hold, between the contraction ratios
of the IFS and the base $D$. In the context of self-similar sets, the
strongest assertion is the following theorem proved in 
\cite{HochmanShmerkin2015}:
\begin{thm}[Hochman-Shmerkin]
\label{thm:Hochman-Shmerkin}Let $K\subseteq\mathbb{R}$ be the attractor
of a contraction IFS $\Phi=\left\{ \varphi_{i}\right\} _{i\in\Lambda}$
with contraction ratios $r_{i}$ (for $i\in\Lambda$), and let $\mu$
be a $\Phi$-Bernoulli measure on $K$. Assume
that $\Phi$ satisfies the open set condition. Then for every integer $D \geq 2$
satisfying
\eq{eq: independence condition}{
  \text{there is } i \in \Lambda \text{  for which }
  \dfrac{\log\left(r_{i}\right)}{\log\left(D\right)}\notin\mathbb{Q},
  }
$\mu$-a.e. number is normal to base $D$.  
\end{thm}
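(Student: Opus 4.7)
The plan is to prove that $\mu$-a.e.\ $x$ is normal to base $D$ by establishing, for $\mu$-a.e.\ $x$, equidistribution of the forward orbit of $x$ under $T_{D}\colon \mathbb{T}\to\mathbb{T}$, $T_{D}(y) = Dy \pmod{1}$, with respect to Lebesgue measure. Identifying $K$ with its image modulo $\mathbb{Z}$, the goal becomes $\tfrac{1}{N}\sum_{n=0}^{N-1}\delta_{T_{D}^{n} x} \to \mathrm{Leb}$ in the weak-$*$ topology for $\mu$-a.e.\ $x$.

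I would place the problem in the scenery flow framework of Furstenberg and Hochman--Shmerkin. To each $x\in K$ and $t\ge 0$, associate the rescaled measure $\mu_{x,t}$ obtained by dilating $\mu$ around $x$ by factor $e^{t}$ and restricting to a fixed unit window. Under the open set condition, Hochman's exponential separation theorem implies that $\mu$ is exact-dimensional with $\dim\mu = \min(1, s_{\mu})$, where $s_{\mu}$ is the similarity dimension, and moreover that $\mu$ is a \emph{uniformly scaling measure}: for $\mu$-a.e.\ $x$, the empirical scenery averages $\tfrac{1}{T}\int_{0}^{T} \delta_{\mu_{x,t}}\,dt$ converge weakly to a common distribution $Q$ on the space of probability measures. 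The distribution $Q$ is ergodic for the scaling semigroup $S_{t}\colon \nu\mapsto e^{t}\nu$ and admits an explicit description in terms of the Bernoulli coding of $\mu$ via the IFS data $(r_{i},\alpha_{i},p_{i})$.

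The main step is a transfer from $Q$-ergodicity to $T_{D}$-equidistribution. The key observation is that $T_{D}$ acts locally like $S_{\log D}$ followed by reduction modulo $\mathbb{Z}$, so the Cesaro averages $\tfrac{1}{N}\sum_{n<N}(T_{D})^{n}_{*}\mu$ can be expressed as integrals against $Q$ of $S_{n\log D}$-translates. The hypothesis $\log r_{i}/\log D \notin \mathbb{Q}$ for some $i$ is precisely the non-arithmetic condition ensuring that the subgroup of $\mathbb{R}$ generated by $\log D$ together with $\{-\log r_{j}\}_{j}$ is dense; combined with the ergodicity of $Q$, this rules out $Q$ concentrating on measures whose scaling symmetry group contains $\log D$, forcing the weak-$*$ Cesaro limit to be Lebesgue. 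A Marstrand-type projection theorem for CP-distributions then upgrades this to a $\mu$-almost sure, pointwise statement about individual $T_{D}$-orbits.

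The main obstacle is establishing the uniformly scaling property with an explicit, ergodic scenery distribution $Q$, and then invoking the irrationality hypothesis to preclude arithmetic symmetries of $Q$. Both rest on the CP-chain machinery of Hochman--Shmerkin and a careful analysis of how the Bernoulli coding of $\mu$ interacts with the scaling flow. The open set condition enters to guarantee sufficient separation of cylinders, without which the scenery decomposition along the Bernoulli structure would break down; the irrationality hypothesis is the critical arithmetic input at the final step where one rules out $S_{\log D}$-invariant scenery limits other than Lebesgue.
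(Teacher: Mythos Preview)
The paper does not prove this theorem. Theorem~\ref{thm:Hochman-Shmerkin} is stated as background, attributed to Hochman and Shmerkin, and referenced to \cite{HochmanShmerkin2015}; the sentence immediately after it says it ``is a special case of more general results proved in \cite{HochmanShmerkin2015}.'' The paper's own contributions (Theorems~\ref{thm:unique stationary measures T^n} and~\ref{thm:normal number is fractals}) address the complementary regime where all contraction ratios are rational powers of $D^{-1}$, and are proved by an entirely different method (stationary measures for affine random walks on $\mathbb{T}^d$ via Furstenberg's limit-measure decomposition). So there is no ``paper's own proof'' to compare your proposal against.

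As for the proposal on its own terms: you have correctly identified the framework Hochman and Shmerkin actually use (uniformly scaling measures, CP-distributions, scenery flow), and the role of the irrationality hypothesis as a non-arithmeticity condition on the scaling spectrum. But several steps are imprecise. Under the open set condition one does not need exponential separation to obtain exact dimensionality or the uniformly-scaling property; these follow from the self-similar structure directly. More importantly, the passage from ergodicity of the scenery distribution $Q$ to \emph{pointwise} equidistribution of individual $T_D$-orbits is the substantive content of the Hochman--Shmerkin equidistribution theorem, not a ``Marstrand-type projection theorem,'' and your one-line description (``ruling out $Q$ concentrating on measures whose scaling symmetry group contains $\log D$'') skips the actual mechanism, which involves analyzing the pure-point spectrum of the CP-distribution and showing that the relevant eigenvalue group does not contain $\log D$. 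What you have written is a plausible table of contents for the Hochman--Shmerkin argument, but not a proof; filling it in would amount to reproducing a significant portion of \cite{HochmanShmerkin2015}.
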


This is a special case of more general results proved in 
\cite{HochmanShmerkin2015}. 
 
\subsubsection{New results}
In the 1-dim case, our result deals with the opposite situation to the one treated
in Theorem \ref{thm:Hochman-Shmerkin}. Instead of assuming that at
least one contraction ratio of the IFS is multiplicatively independent
of the base $D$, we assume that all the contraction ratios of the
IFS are integer powers of $D$. As a substitute for the
`independence' assumption \equ{eq: independence condition}, we impose
an irrationality condition on the translation parts of the functions in the IFS. In its full generality, our result states the following: 
\begin{thm}
\label{thm:normal number is fractals}Let $K$ be the attractor of
an IFS $\Phi\df\{f_{1},...,f_{k}\}$, where for each
$i\in\Lambda\df\left\{ 1,...,k\right\} $, $f_{i}:\mathbb{R}^{d}\to\mathbb{R}^{d}$
is given by
$$f_{i}(x)=D^{-r_i}x+t_{i},$$
for some $t_{i}\in\mathbb{R}^{d}$, $r_i \in \mathbb{N}$ and 
some expanding $d\times d$ matrix $D$ with integer coefficients. Assume that
\eq{eq: irrationality cond1}{
\begin{array}{l}
\text{the set }\left\{ \left(\mathbb{I}_{d}-D^{r_{i}}\right)D^{r_{j}}t_{j}-\left(\mathbb{I}_{d}-D^{r_{j}}\right)D^{r_{i}}t_{i}:\,i,j\in\Lambda \right\} \\
\text{is not contained in any proper closed subgroup of }\mathbb{T}^{d}.
\end{array}}
Then, for every $\Phi$-Bernoulli measure $\mu$ on $K$, for
$\mu$-almost every $x\in K$, the sequence $\{D^{m}x\}_{m=1}^{\infty}$
is equidistributed
in $\mathbb{T}^{d}$ with respect to Haar measure. 
\end{thm}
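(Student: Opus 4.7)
The plan is to reduce Theorem~\ref{thm:normal number is fractals} to Theorem~\ref{thm:unique stationary measures T^n} via a random walk on $\mathbb{T}^d$ built from the inverses of the $f_i$. First I would define $h_i\colon\mathbb{T}^d\to\mathbb{T}^d$ by $h_i(y)\df D^{r_i}(y-t_i)\bmod\mathbb{Z}^d$; because $D^{r_i}$ is an integer matrix, $h_i$ is a well-defined affine toral endomorphism, the linear parts $D^{r_i}$ commute pairwise, and they are all expanding. The first substep is to verify that the irrationality condition of Theorem~\ref{thm:unique stationary measures T^n} for this walk---namely that $\{(\mathbb{I}_d-D^{r_i})\alpha_j-(\mathbb{I}_d-D^{r_j})\alpha_i\}$ with $\alpha_i=-D^{r_i}t_i$ generates a dense subgroup of $\mathbb{T}^d$---is equivalent to (or implied by) the IFS hypothesis~\eqref{eq: irrationality cond1}. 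This is an algebraic reduction: in the special case $r_1=\dots=r_k=r$ the condition elements are just $(\mathbb{I}_d-D^r)$ times the IFS elements $D^{r_j}t_i-D^{r_i}t_j$, and $\mathbb{I}_d-D^r$ is a surjective toral endomorphism because $D$ is expanding; the general case requires a little more bookkeeping using commutativity of the $D^{r_i}$. Once verified, Corollary~\ref{cor:a.s. equidistribution of trajecotries} applied with starting point $y=0$ yields that for $P^{\otimes\mathbb{N}}$-a.e.\ $\omega$, the partial sum
\[
A_n\df\sum_{k=1}^n D^{S_n-S_{k-1}}\,t_{\omega_k}\pmod{\mathbb{Z}^d},\qquad S_n\df\sum_{k=1}^n r_{\omega_k},
\]
equidistributes in $\mathbb{T}^d$ with respect to Haar measure.

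The decisive link between this random walk and the $D$-orbit is the identity
\[
D^{S_n}\,\pi_\Phi(\omega)\equiv A_n+\pi_\Phi(\sigma^n\omega)\pmod{\mathbb{Z}^d},
\]
obtained by splitting the coding series $\pi_\Phi(\omega)=\sum_k D^{-S_{k-1}}t_{\omega_k}$ at index $n$ and applying $D^{S_n}$. Under $P^{\otimes\mathbb{N}}$, $A_n$ is measurable with respect to $(\omega_1,\dots,\omega_n)$ while $\pi_\Phi(\sigma^n\omega)$ is measurable with respect to $(\omega_{n+1},\omega_{n+2},\dots)$, so the two summands are independent. Combining the Haar equidistribution of $A_n$ with this independence via Fourier analysis---each nontrivial character $\chi_\xi$ of $\mathbb{T}^d$ factors across the sum, and $E[\chi_\xi(A_n)]$ tends to $0$ as $n\to\infty$ while $E[\chi_\xi(\pi_\Phi(\sigma^n\omega))]$ is bounded by $|\hat\mu(\xi)|$---yields convergence in mean of the character sums for $\{D^{S_n}\pi_\Phi(\omega)\}_n$. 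Upgrading this to almost-sure convergence requires a second-moment bound on the partial sums together with a Borel--Cantelli argument along a sufficiently dense subsequence.

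Finally, the conclusion concerns $\{D^m\pi_\Phi(\omega)\}_{m\ge 1}$, not only the subsequence at times $S_n$. Since $S_n-S_{n-1}=r_{\omega_n}\le R\df\max_i r_i$ is uniformly bounded and $D$ preserves Haar, a gap-filling argument grouping the indices $m$ into blocks $(S_{n-1},S_n]$---on which $D^m\pi_\Phi(\omega)=D^{m-S_{n-1}}\cdot D^{S_{n-1}}\pi_\Phi(\omega)$---converts equidistribution along $\{S_n\}$ into equidistribution along all positive integers, provided one also establishes joint equidistribution of the pair $(D^{S_n}\pi_\Phi(\omega),\omega_{n+1})$ in $\mathbb{T}^d\times\Lambda$. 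I expect the principal obstacle to lie in this joint step together with the almost-sure upgrade above: the coupling is delicate because $\pi_\Phi(\sigma^n\omega)$ and the next block length $r_{\omega_{n+1}}$ share the coordinate $\omega_{n+1}$, so one must carefully interleave the random-walk convergence (which sees the past only through $A_n$) with the shift-invariance of $P^{\otimes\mathbb{N}}$ to obtain simultaneous almost-sure equidistribution.
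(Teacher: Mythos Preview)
Your high-level plan --- define the affine toral endomorphisms $h_i$, invoke Theorem~\ref{thm:unique stationary measures T^n}/Corollary~\ref{cor:a.s. equidistribution of trajecotries} to equidistribute the random walk $A_n=h_{\omega_n}\circ\cdots\circ h_{\omega_1}(0)$, and use the identity $D^{S_n}\pi_\Phi(\omega)=A_n+\pi_\Phi(\sigma^n\omega)$ --- is exactly the paper's skeleton. The divergence, and the gap, is in how you pass from equidistribution of $A_n$ to equidistribution of $D^m\pi_\Phi(\omega)$.

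Your Fourier-plus-second-moment route has a genuine hole at the almost-sure upgrade. Independence of $A_n$ and $\pi_\Phi(\sigma^n\omega)$ for a \emph{single} $n$ gives $\mathbb{E}[\chi_\xi(A_n+\pi_\Phi(\sigma^n\omega))]\to 0$, but the variance bound you would need for Borel--Cantelli involves correlations across \emph{different} $n$, and neither the $A_n$'s nor the $\pi_\Phi(\sigma^n\omega)$'s are independent in $n$; Corollary~\ref{cor:a.s. equidistribution of trajecotries} by itself gives you no quantitative decorrelation to exploit. The paper bypasses this entirely by quoting a joint-equidistribution black box from Simmons--Weiss (Proposition~\ref{prop:Simmons-Weiss-equi-of-pairs}): once $(A_n)_n$ is a.s.\ equidistributed with respect to Haar, so is $((A_n,T^n\omega))_n$ with respect to $\mathrm{Haar}\otimes\beta$, and pushing forward by the continuous map $(a,\omega')\mapsto a+\pi_\Phi(\omega')$ yields the a.s.\ conclusion in one stroke, with no moment estimates.

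For the gap-filling from $\{S_n\}$ to all integers $m$, you correctly locate the obstacle, but joint equidistribution with $\omega_{n+1}$ alone is not the right bookkeeping: to invert $n\mapsto\ell$ and weight correctly you need the residue of $S_n$ modulo $r=\max_i r_i$, not just the next symbol. The paper's device is to record $s_{b,n}\equiv -S_n\pmod r$ via a surjective morphism $\kappa$ into the cyclic group $C_r$, pass to the \emph{two-sided} shift $\bar B$, and prove (Proposition~\ref{prop: independence 2}) triple joint equidistribution of $(A_n,\,s_{b,n},\,T^nb)$ on $\mathbb{T}^d\times C_r\times\bar B$ with respect to $\mathrm{Haar}\otimes\theta\otimes\bar\beta$. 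Both the repetition count $|\{m:\ell_{b,m}=\ell_{b,n}\}|$ and the correcting power $D^{s_{b,n}}$ are then continuous functions of $(s_{b,n},T^nb)$, and a single pushforward computation finishes. Working on the two-sided shift is exactly what dissolves the coupling you flag between $\pi_\Phi(\sigma^n\omega)$ and $r_{\omega_{n+1}}$: the multiplicity function now reads a bounded window of coordinates on both sides of position $n$, independently of the forward tail that determines $\pi_\Phi$.

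In short: same architecture, but the paper replaces both of your ad-hoc probabilistic upgrades by structured joint-equidistribution statements, and these are doing real work that your second-moment sketch does not supply.
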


Note that whenever $D$ is expanding, the functions $f_i$ are indeed contractions
as defined in \S \ref{subsec:Iterated-function-systems}
(see Lemma \ref{lem:expansion implies existence of a norm}).

Observe that in the one-dimensional case, and when $r_1 = \cdots =
r_k=1$, assumption  \equ{eq:
  irrationality cond1} is equivalent to assuming that there exists a pair
$i,j\in\Lambda$ s.t. $t_{i}-t_{j}\notin\mathbb{Q}$, and in case
this condition holds, then w.r.t. any $\Phi$-Bernoulli measure
on $K$, almost every number is normal to base $D$.
In fact, the same conclusion holds for all $\Phi^{n}$-Bernoulli measures,
where $\Phi^{n}\df\left\{ f_{i_{1}}\circ\cdots\circ
  f_{i_{n}}:\,\left(i_{1},...,i_{n}\right)\in\Lambda^{n}\right\} $. 

As an example, denote by $\mathcal{C} \subset [0,1]$ the middle-thirds
Cantor set  and let $K \df \alpha \cdot \mathcal{C} + \beta$,
where $\alpha$ is any irrational number and $\beta \in \mathbb{R}$. Then $K$ is
the attractor of %the IFS
$\Phi=\left\{
  x\mapsto\frac{1}{3}x +
  \beta,\,x\mapsto\frac{1}{3}x+\frac{2\alpha}{3} + \beta\right\}
$ 
having a uniform contraction ratio of $3^{-1}$. While prior results
did not provide information regarding normality of typical points in
$K$ to base 3, from Theorem \ref{thm:normal number is fractals}
one may deduce that w.r.t. any $\Phi$-Bernoulli measure on $K$,
almost every point is normal to base 3. Thus Theorem \ref{thm:normal
  number is fractals} 
complements Theorem \ref{thm:Hochman-Shmerkin} of Hochman-Shmerkin,
and combining them we 
have the following corollary.  
\begin{cor}
With the above notations and assumptions, with respect to any
$\Phi$-Bernoulli measure on $K = \alpha \cdot \mathcal{C}+\beta$, 
a.e. point is normal to every integer base $D \geq 2$. 
\end{cor}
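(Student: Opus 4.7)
The plan is to deduce the corollary by partitioning the set of integer bases $D\ge 2$ into two classes: those that are powers of $3$, handled with Theorem \ref{thm:normal number is fractals}, and those that are not, handled with Theorem \ref{thm:Hochman-Shmerkin}. Each theorem then produces a $\mu$-conull subset of $K$ on which normality to the corresponding base $D$ holds, and a countable intersection over $D\ge 2$ yields the claimed conclusion.

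For bases $D$ that are not a power of $3$, I would apply Theorem \ref{thm:Hochman-Shmerkin} directly. Both maps of $\Phi$ are similarities of contraction ratio $1/3$, and $\Phi$ satisfies the open set condition, since $K$ is an affine image of the middle-thirds Cantor set and the two images $f_1(I)$ and $f_2(I)$ are disjoint sub-intervals of the convex hull $I$ of $K$. The assumption in \equ{eq: independence condition} becomes $\log 3 / \log D \notin \mathbb{Q}$, which holds precisely when $D$ is not a power of $3$; hence for each such $D$, $\mu$-a.e.\ $x \in K$ is normal to base $D$.

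For $D = 3$, I would apply Theorem \ref{thm:normal number is fractals} with $d=1$, $D=3$, $r_1=r_2=1$, $t_1 = \beta$, $t_2 = 2\alpha/3+\beta$. A direct computation shows that the set appearing in \equ{eq: irrationality cond1} equals $\{0,\, \pm 2\alpha\} \pmod 1$; since $\alpha$ is irrational, $2\alpha$ generates a dense subgroup of $\mathbb{T}$, so this set is not contained in any proper closed subgroup. The theorem then gives equidistribution of $\{3^m x\}_{m\ge 1}$ in $\mathbb{T}$ for $\mu$-a.e.\ $x \in K$, i.e., $\mu$-a.e.\ normality to base $3$. For $D = 3^s$ with $s \ge 2$ I would invoke the classical theorem of Maxfield (1953) that a real number is normal to base $D$ if and only if it is normal to base $D^k$ for every positive integer $k$; thus $\mu$-a.e.\ normality to base $3$ automatically upgrades to $\mu$-a.e.\ normality to base $3^s$ for every $s \ge 1$.

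Finally, taking a countable intersection of the $\mu$-conull sets thus obtained, one for each base $D \ge 2$, yields a single full-measure subset of $K$ whose points are normal to every integer base, which is the corollary. I do not anticipate any serious obstacle; the only substantive subtlety is the passage from base $3$ to base $3^s$ for $s \ge 2$, which is not immediate from Theorem \ref{thm:normal number is fractals} alone because the contraction $1/3$ is not of the form $(3^s)^{-r}$ for $r \in \mathbb{N}$, and this is precisely what the Maxfield equivalence circumvents.
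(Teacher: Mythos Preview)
Your proposal is correct and follows exactly the approach the paper intends: the corollary is stated immediately after the sentence ``Thus Theorem~\ref{thm:normal number is fractals} complements Theorem~\ref{thm:Hochman-Shmerkin} of Hochman--Shmerkin, and combining them we have the following corollary,'' with no further proof given. Your write-up fills in precisely those details --- verifying the open set condition and the independence condition \equ{eq: independence condition} for Hochman--Shmerkin when $D$ is not a power of $3$, and verifying \equ{eq: irrationality cond1} for Theorem~\ref{thm:normal number is fractals} when $D=3$ --- and your invocation of Maxfield's equivalence to pass from base $3$ to bases $3^s$ is a clean way to handle the case the paper leaves implicit (an alternative, hinted at by the paper's remark on $\Phi^n$-Bernoulli measures just before the corollary, is to view any $\Phi$-Bernoulli measure as a $\Phi^s$-Bernoulli measure and apply Theorem~\ref{thm:normal number is fractals} directly with $D=3^s$).
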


%\begin{rem}
%One useful property of normal numbers is that a number $x$ is normal
%to some integer base $D \geq 2$ if and only if it is normal to base $D^{s}$ for
%every positive integer $s$ (\cite[Thm. 4.4]{bugeaud2012distribution}).
%Using this fact, given a uniform contraction ratio $\frac{1}{D}$, 
%for every $n\in\mathbb{N}$ one may consider the IFS
%$\Phi^{n}\df\left\{ f_{i_{1}}\circ\cdots\circ
% f_{i_{n}}:\,\left(i_{1},...,i_{n}\right)\in\Lambda^{n}\right\} $. 
%Obviously, $\Phi^{n}$ and $\Phi$ have the same attractor, and
%it is not hard to show that $\Phi$ satisfies \equ{eq: irrationality cond1} if and only if
%$\Phi^{n}$ does. Hence Theorem \ref{thm:normal number is fractals}
%holds for all $\Phi^{n}$-Bernoulli measures, for every positive integer
%$n$.
%%More generally, Theorem \ref{thm:normal number is fractals} holds
%%for an IFS obtained by  maps whose contraction ratios are of the form
%%$D^{-s_i}, \, i=1, \ldots, n$, as can be seen by working with $s$
%%equal to the least common multiple of the $s_i$. 
%\end{rem}

\begin{rem}
It would be interesting to extend Theorem \ref{thm:normal number is fractals}
to the more general 
class of attractors of IFSs, in which the maps have different linear parts.
  \end{rem}

In \S\ref{sec:when the condition does not hold} we analyze
the case in which $d=1$ and assumption \equ{eq: irrationality cond1} does not
hold. We also assume that $r_1 = \cdots =r_k=1$. 
In this case we have:
\begin{thm}
  \label{thm:translations are normal to base D.}
Assume that $t_{i}-t_{j}\in\mathbb{Q}$ 
for every $i,j\in\Lambda$, and that $t_{i}$ is normal
to base $D$ for some $i\in\Lambda$. Then for any Bernoulli measure
$\mu$, a.e. $x\in K$
is normal to base $D$. 
\end{thm}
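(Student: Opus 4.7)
The plan is as follows. Since every $t_i - t_{i_0}$ lies in $\mathbb{Q}$, Wall's theorem (invoked in the introduction) transfers normality from $t_{i_0}$ to $c t_{i_0}$, where $c = D/(D-1) \in \mathbb{Q}$; in particular the $T$-orbit of $c t_{i_0}$ equidistributes to Haar on $\mathbb{T}$, so $\frac{1}{M}\sum_{m=0}^{M-1} e^{2\pi i k c D^m t_{i_0}} \to 0$ for every $k \neq 0$. Fix $N \in \mathbb{N}$ minimal with $a_i := N(t_i - t_{i_0}) \in \mathbb{Z}$ for all $i$. Summing the geometric series in the coding map decomposes each $x = \pi_\Phi(\mathbf{i}) \in K$ as $x = c t_{i_0} + y(\mathbf{i})$ with $y(\mathbf{i}) = \sum_{k \ge 1} D^{-(k-1)}(t_{i_k} - t_{i_0})$, and since $T$ is a group endomorphism of $\mathbb{T}$, $T^m x = T^m(c t_{i_0}) + T^m y(\mathbf{i})$ in $\mathbb{T}$. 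By Weyl's criterion, the theorem reduces to showing, for every nonzero integer $k$ and $\nu$-a.e.\ $\mathbf{i}$ (with $\nu$ the Bernoulli measure satisfying $\mu = (\pi_\Phi)_*\nu$), that
\[
\frac{1}{M}\sum_{m=0}^{M-1} e^{2\pi i k T^m(c t_{i_0})}\, e^{2\pi i k T^m y(\mathbf{i})} \longrightarrow 0.
\]

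Next I would analyze the stochastic factor via the integer-coefficient coding $z(\mathbf{i}) := N y(\mathbf{i}) = \sum_{k\ge 1} D^{-(k-1)} a_{i_k}$. A direct split yields $D^m z(\mathbf{i}) = I_m + z(\sigma^m \mathbf{i})$ with $I_m := \sum_{k=1}^m D^{m-k+1} a_{i_k} \in \mathbb{Z}$ and $\sigma$ the Bernoulli shift, whence $T^m y(\mathbf{i}) \equiv (r_m + z(\sigma^m \mathbf{i}))/N \pmod{1}$, where $r_m := I_m \bmod N$ obeys the first-order recursion $r_m \equiv D(r_{m-1} + a_{i_m}) \pmod{N}$. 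Hence $(\sigma^m \mathbf{i}, r_m)$ is the $m$-th iterate, starting from $(\mathbf{i}, 0)$, of the skew product
\[
\tilde\sigma : \Lambda^{\mathbb{N}} \times \mathbb{Z}/N\mathbb{Z} \to \Lambda^{\mathbb{N}} \times \mathbb{Z}/N\mathbb{Z}, \qquad \tilde\sigma(\mathbf{i}, r) = \bigl(\sigma \mathbf{i},\; D(r + a_{i_1}) \bmod N\bigr),
\]
and the exponential rewrites as $e^{2\pi i k T^m y(\mathbf{i})} = \tilde G_k\bigl(\tilde\sigma^m(\mathbf{i}, 0)\bigr)$, with $\tilde G_k(\mathbf{j}, r) := e^{2\pi i k (r + z(\mathbf{j}))/N}$ bounded. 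The minimality of $N$ forces $\gcd(\{a_i\}, N) = 1$, and a short argument (distinguishing the easy case $\gcd(D,N) = 1$ from the general one, in which the $\tilde\sigma$-orbit of $r = 0$ is trapped in a proper subgroup of $\mathbb{Z}/N\mathbb{Z}$) identifies the unique $\tilde\sigma$-stationary probability measure of the form $\tilde\nu = \nu \times p^*$ and yields ergodicity of $\tilde\sigma$ together with exponential decay of correlations, inherited from the Bernoulli shift.

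Finally, set $a_m := e^{2\pi i k c D^m t_{i_0}}$ and decompose $\tilde G_k = \bar G_k + \tilde G_k^{(0)}$ into its $\tilde\nu$-mean and its mean-zero remainder. The Weyl sum becomes
\[
\bar G_k \cdot \frac{1}{M}\sum_{m=0}^{M-1} a_m \; +\; \frac{1}{M}\sum_{m=0}^{M-1} a_m \tilde G_k^{(0)}\bigl(\tilde\sigma^m(\mathbf{i}, 0)\bigr).
\]
The first term tends to $0$ by the normality of $c t_{i_0}$. For the second, an $L^2(\nu)$-variance computation combined with the exponential covariance bound $\bigl|\operatorname{Cov}_{\tilde\nu}(\tilde G_k^{(0)}\circ \tilde\sigma^m,\; \tilde G_k^{(0)}\circ \tilde\sigma^{m'})\bigr| \lesssim \rho^{|m - m'|}$ gives a variance of order $O(1/M)$; applying Borel--Cantelli along the dyadic subsequence $M = 2^\ell$ and interpolating for general $M$ via the boundedness of $\tilde G_k^{(0)}$ then upgrades the $L^2$-convergence to $\nu$-almost sure convergence, which is exactly what Weyl's criterion needs. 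The main obstacle is the careful verification of ergodicity and exponential mixing of $\tilde\sigma$ with respect to $\tilde\nu$, particularly when $\gcd(D, N) > 1$, which requires identifying the $\tilde\sigma$-orbit closure of $(\cdot, 0)$ in the fiber and working with the induced cocycle on the appropriate coset of $\mathbb{Z}/N\mathbb{Z}$.
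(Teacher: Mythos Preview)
Your proposal is correct and follows the same initial decomposition as the paper (up to a constant shift): $D^m x \equiv \alpha_m + \eta_m(\underline{i}) + \pi(T^m\underline{i}) \pmod 1$, where $\alpha_m = D^m\frac{D}{D-1}t_1 - \frac{D}{D-1}t_1$ is deterministic and $\eta_m$ (your $r_m/N$) is a rational-valued process. The approaches diverge in how joint equidistribution is then established. The paper first proves, via Doob's martingale convergence theorem, an abstract proposition that an irreducible aperiodic finite-state Markov chain equidistributes along \emph{any} deterministic subsequence; it uses this to couple $(\alpha_m,\eta_m)$, and then invokes the Simmons--Weiss proposition to adjoin the third factor $\pi(T^m\underline{i})$. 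This yields a general intermediate theorem: the limiting measure of $(D^m x)$ is always the convolution $\nu * p * \tilde\mu_K$, and the present statement is just the special case $\nu=\mathrm{Haar}$. You instead merge the Markov and shift factors into a single skew product over the Bernoulli base, apply Weyl's criterion directly, and kill the mean-zero fluctuation by a second-moment/Borel--Cantelli argument using exponential mixing. Your route is more self-contained (no Simmons--Weiss input) but is tailored to $\nu=\mathrm{Haar}$; the paper's route gives the full description of the limit measure for free.

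Two technical remarks. First, the covariance bound you quote is under $\tilde\nu$, but your process is launched from $\nu\times\delta_0$; the discrepancy is $O(\rho^m)$ by exponential ergodicity of the fiber chain and is harmless, but should be said. Second, interpolating from $M=2^\ell$ to general $M$ by boundedness alone fails, since the dyadic gaps have size $\asymp M$; use $M=n^2$ (so the gap is $O(\sqrt{M})$) or a maximal inequality for the partial sums. Finally, the paper disposes of the $\gcd(D,N)>1$ issue you flag simply by restricting the Markov chain to its set $\tilde\Delta$ of reachable states and observing (because one increment $\delta_1=0$ vanishes) that the chain on $\tilde\Delta$ is irreducible and aperiodic; the same device works in your skew-product formulation.
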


For example, if $\mathcal{C}$ is the Cantor middle-thirds set, and
$\alpha$ is normal to base 3, then so is a.e. $x \in
\mathcal{C}+\alpha$.

\subsection{Acknowledgments} 
We gratefully
acknowledge the support of  BSF grant 2016256 and ISF grant 2095/15. 
The first named author is also grateful for the support of GIF grant 1485-304.6/2019. The second named author would like to thank the Department of Mathematics and Statistics, Indian Institute of Technology Kanpur, for providing an excellent ambience to pursue research. 
\section{Preliminaries\label{sec:Random-walks-on-tori}}
In this section we collect some preliminary results we will need. 

\subsection{Limit measures}
A key ingredient in the proof of Theorem \ref{thm:unique stationary measures T^n}
is the following result (\cite{Furstenberg1963},
see also \cite[Lemmas 1.17, 1.19, 1.21]{Benoist-QuintBook}) which
applies in the setting of random walks as described at the beginning
of section \ref{sec:Introduction}. 
\begin{prop}[Furstenberg]
\label{thm:Furstenberg}Let $\nu$ be a $\mu$-stationary probability
measure on $\mathbb{T}^{d}$, then the following hold: 
\end{prop}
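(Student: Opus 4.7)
The plan is to construct the Furstenberg limit measures via a measure-valued martingale argument. For each sample path $b = (b_1, b_2, \ldots) \in B$ and each $n \geq 0$, introduce the pushforward $\nu_n(b) \df (b_1 \cdots b_n)_* \nu$. The first step is to verify that for every fixed $\varphi \in C(\mathbb{T}^d)$, the real-valued sequence $n \mapsto \int \varphi \, d\nu_n(b)$ is a bounded martingale on $(B, \beta)$ with respect to the filtration of $\sigma$-algebras generated by the first $n$ coordinates of $b$. The martingale identity is a direct consequence of the stationarity $\mu * \nu = \nu$: conditioning on $(b_1, \ldots, b_n)$ and integrating out $b_{n+1}$ replaces the inner pushforward by $\nu$ itself, so
$$\mathbb{E}\!\left[\int \varphi \, d\nu_{n+1}(b) \,\Big|\, b_1,\ldots,b_n\right] = \int \!\!\int \varphi(b_1\cdots b_n g x)\,d\mu(g)\,d\nu(x) = \int \varphi \, d\nu_n(b).$$

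Next I would apply Doob's martingale convergence theorem. Since $C(\mathbb{T}^d)$ is separable, fix a countable dense family $\{\varphi_k\} \subseteq C(\mathbb{T}^d)$ and take the intersection of the corresponding $\beta$-full-measure convergence sets to obtain a $\beta$-conull set $B_0 \subseteq B$ on which every limit $\lim_n \int \varphi_k \, d\nu_n(b)$ exists. For each $b \in B_0$ the limiting functional extends by uniform continuity to a positive linear functional of norm one on $C(\mathbb{T}^d)$, hence by Riesz representation to a Borel probability measure $\nu_b$ on $\mathbb{T}^d$, and $\nu_n(b) \to \nu_b$ in the weak-$*$ topology. Measurability of the assignment $b \mapsto \nu_b$ follows from the construction.

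With the limit measures in place, the standard properties should drop out immediately. The identity $\nu = \int \nu_b \, d\beta(b)$ comes from the relation $\int \nu_n(b) \, d\beta(b) = \nu$ (itself another rewriting of stationarity) by testing against each $\varphi \in C(\mathbb{T}^d)$ and using dominated convergence. The equivariance relation $\nu_b = (b_1)_* \nu_{Tb}$ follows by factoring $\nu_{n+1}(b) = (b_1)_* \nu_n(Tb)$ and letting $n \to \infty$, using that pushforward by the fixed element $b_1$ is weak-$*$ continuous on the space of probability measures.

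I expect the main technical care to be the passage from a scalar-valued martingale statement for each test function to a single well-defined measure-valued map $b \mapsto \nu_b$, which requires separability of $C(\mathbb{T}^d)$ together with a diagonal argument to synthesize the countably many full-measure sets. Once the limit measures are constructed, all of the listed conclusions are routine applications of stationarity and weak-$*$ convergence; the deeper content lies in how these $\nu_b$ will be used downstream in the proof of Theorem \ref{thm:unique stationary measures T^n}.
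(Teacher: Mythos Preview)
The paper does not prove this proposition; it is quoted from Furstenberg with a pointer to \cite[Lemmas 1.17, 1.19, 1.21]{Benoist-QuintBook}. Your martingale argument is exactly the standard proof found there, and your treatment of items (1) and (2) is correct.

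There is, however, a genuine gap regarding item (3). What you establish at the end is the \emph{equivariance relation} $\nu_b = (b_1)_*\nu_{Tb}$, obtained by factoring $\nu_{n+1}(b) = (b_1)_*\nu_n(Tb)$ and passing to the limit. But item (3) asserts something different: for $\beta\times\mu^{*m}$-a.e.\ $(b,g)$ one has $\nu_b=\lim_n (b_1\circ\cdots\circ b_n\circ g)_*\nu$, i.e.\ the limit is insensitive to appending an extra word $g$ at the \emph{innermost} position. Equivariance does not give this, and it is precisely item (3) that the paper invokes later (see the line ``From \eqref{rewritten} and Proposition \ref{thm:Furstenberg}(3) we obtain\ldots'' in the proof of Theorem \ref{thm:unique stationary measures T^n}). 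To get (3) from the martingale set-up you need one more step: for a fixed test function $\varphi$, the measure-preserving maps $\Theta_n:(b,g)\mapsto(b_1,\ldots,b_n,g_1,\ldots,g_m,b_{n+1},\ldots)$ from $(B\times G^m,\beta\times\mu^{*m})$ to $(B,\beta)$ let you write $\int\varphi\,d((b_1\cdots b_n g)_*\nu)-\int\varphi\,d\nu_n(b)=(\psi_{n+m}-\psi_n)\circ\Theta_n$, and then the summability $\sum_n\|\psi_{n+m}-\psi_n\|_{L^2}^2<\infty$ (from orthogonality of martingale increments) upgrades $L^2$ convergence to a.e.\ convergence of this difference to zero.
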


\begin{enumerate}
\item The limit measures $\nu_{b}=\lim\limits
  _{n\to\infty}\left(b_{1}\circ\cdots \circ b_{n}\right)_{*}\nu$
  exist for $\beta$-a.e. $b\in B$.
  %, and the map $b\mapsto\nu_{b}$ is Borel. 
\item ${\displaystyle \nu=\int \limits _{B}\nu_{b} \, d\beta\left(b\right)}$. 
\item For all $m\in\mathbb{N}$, for
  $\beta\times\mu^{*m}$-a.e. $\left(b,g\right) \in B \times G^m$, we
  have 
$\nu_{b}=\lim\limits _{n\to\infty}\left(b_{1}\circ\cdots\circ b_{n}\circ g\right)_{*}\nu$. 
\end{enumerate}
This is a special case $X = \mathbb{T}^{d}$ of a much more general result. 

\subsection{Commuting expanding matrices}
Recall that a linear transformation (or matrix) is called expanding if all its (complex)
eigenvalues have modulus larger than 1. We shall use the following
characterization of this property. 
\begin{lem}
\label{lem:expansion implies existence of a norm}Let $\mathcal{A}$
be a finite collection of expanding commuting $d\times d$ matrices with entries
in $\mathbb{C}$. Then there exists a norm
$\left\Vert \cdot\right\Vert $ on $\mathbb{C}^{d}$ and some $\rho>1$
such that for every $A\in\mathcal{A}$ and every $x\in\mathbb{C}^{d}$,
$\left\Vert Ax\right\Vert \geq\rho\left\Vert x\right\Vert $. 
\end{lem}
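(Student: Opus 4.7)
The plan is to simultaneously triangularize the family and then use a diagonal rescaling to push all off-diagonal entries to be uniformly small, after which the Euclidean norm on the transformed coordinates does the job.

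First, since $\mathcal{A}$ is a commuting family of matrices over $\mathbb{C}$, a standard generalization of Schur's theorem (proved by induction on $d$, using a common eigenvector for the commuting family on each step) provides an invertible $P \in \mathrm{GL}_{d}(\mathbb{C})$ such that $T_{A} := P^{-1}AP$ is upper triangular for every $A \in \mathcal{A}$ simultaneously. Since the diagonal entries of $T_{A}$ are the eigenvalues of $A$, the expanding hypothesis on $\mathcal{A}$ tells us that every diagonal entry of every $T_{A}$ has modulus strictly greater than $1$. In particular, there exists $\rho_{0} > 1$ such that every diagonal entry of every $T_{A}$ has modulus at least $\rho_{0}$.

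Second, for $\varepsilon > 0$, set $E_{\varepsilon} := \mathrm{diag}(1, \varepsilon, \varepsilon^{2}, \ldots, \varepsilon^{d-1})$. A direct calculation shows that $E_{\varepsilon}^{-1} T_{A} E_{\varepsilon}$ is upper triangular, has the same diagonal as $T_{A}$, and has its $(i,j)$-entry (for $i<j$) equal to $\varepsilon^{j-i}$ times the $(i,j)$-entry of $T_{A}$. Hence, as $\varepsilon \to 0$, the matrix $E_{\varepsilon}^{-1} T_{A} E_{\varepsilon}$ converges (in operator norm) to the diagonal matrix $\Delta_{A}$ of eigenvalues of $A$, and for $\Delta_{A}$ the standard Euclidean bound gives $\|\Delta_{A} y\|_{2} \geq \rho_{0} \|y\|_{2}$ for every $y \in \mathbb{C}^{d}$. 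Since $\mathcal{A}$ is finite, we may choose a single $\varepsilon > 0$ small enough that
\[
\bigl\| E_{\varepsilon}^{-1} T_{A} E_{\varepsilon} \, y \bigr\|_{2} \geq \rho \|y\|_{2}
\]
holds for all $A \in \mathcal{A}$ and all $y \in \mathbb{C}^{d}$, with some fixed $\rho \in (1, \rho_{0})$.

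Third, define the norm on $\mathbb{C}^{d}$ by $\|x\| := \|E_{\varepsilon}^{-1} P^{-1} x\|_{2}$. Setting $y = E_{\varepsilon}^{-1} P^{-1} x$, one computes $\|Ax\| = \|E_{\varepsilon}^{-1} P^{-1} A P E_{\varepsilon} \, y\|_{2} = \|E_{\varepsilon}^{-1} T_{A} E_{\varepsilon} \, y\|_{2} \geq \rho \|y\|_{2} = \rho \|x\|$, which is the desired inequality.

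The only mild subtlety is ensuring that the rescaling parameter $\varepsilon$ can be chosen uniformly across $\mathcal{A}$; this is immediate because $\mathcal{A}$ is finite, so the off-diagonal entries of the $T_{A}$ are uniformly bounded. The main structural step is the simultaneous triangularization, which is exactly where commutativity of the family is used. Without commutativity, one could still find a norm adapted to each individual matrix, but there would be no single norm working for the whole family.
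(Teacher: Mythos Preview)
Your proof is correct and follows essentially the same approach as the paper: simultaneous upper-triangularization of the commuting family, followed by a diagonal rescaling that suppresses the off-diagonal entries. The paper uses the weighted $\ell^\infty$ norm $\max_i m^{i-1}|x_i|$ with $m$ large (and leaves the verification as a direct computation, then extracts $\rho$ via compactness of the unit sphere), while you use the Euclidean norm after the rescaling $E_\varepsilon$ and obtain $\rho$ via a perturbation argument; since $m = 1/\varepsilon$ these are the same rescaling, just paired with different base norms.
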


\begin{proof}
Since the matrices commute, there is a basis of $\mathbb{C}^{d}$ with
respect to which they can all be put in an
upper triangular form. Thus we may assume that all the matrices are in
fact upper triangular 
complex matrices. Denote by $\lambda$ the smallest modulus of an
eigenvalue of all the matrices in $\mathcal{A}$, and denote by $a$
the largest modulus of all entries of the matrices. Let $m\in\mathbb{R}$
be any number satisfying 
\[
m>\dfrac{da}{\lambda -1},
\]
and define a norm by 
\[
\left\Vert \left(\begin{array}{c}
x_{1}\\
\vdots\\
x_{d}
\end{array}\right)\right\Vert \df \max_{i}\left\{ m^{i-1}\left|x_{i}\right|\right\} .
\]
A straightforward computation, which we leave to the reader, shows
that for any $x \in \mathbb{C}^d \setminus \{0\}$ and any $A \in
\mathcal{A}$ we have $\|Ax\| >
\|x\|$. Since the unit sphere in $\mathbb{C}^d$ is compact and
$\mathcal{A}$ is finite, the minimum
$$
\rho \df \min\left\{ \frac{\|Ax\|}{\|x\|} : A \in \mathcal{A}, \, x
  \in \mathbb{C}^d, \, \|x\|=1\right\}
$$
exists, is greater than one, and has the required property. 
\end{proof}

%\subsection{Some notations}
%Given a product space of the form
%$X^{\mathbb{N}}$, we will sometimes use the notation
%$\underline{x}$ for an element of $X^{\mathbb{N}}$, and denote by
%$x_{1},x_{2}, \ldots$ its coordinates in $X$. We will denote the left
%shift on $X^{\mathbb{N}}$ by $T$. Cylinder sets will be denoted  by
%square brackets, that is for a finite word $x = (x_1, \ldots, x_n)$ we
%have 
%$$\left[x\right]=\left\{\underline{ \omega } \in
%  X^{\mathbb{N}}:\,\left(\omega_{1},...,\omega_{n}\right)=\left(x_{1},...,x_{n}\right)
%\right\}.$$ 

\subsection{Invariance of the set of accumulation points of random
  trajectories.} 
The following proposition is a general observation about random walks
on a second countable space, generated by finitely many continuous
maps. It states
that for almost every trajectory, the set of accumulation points along
the trajectory is invariant under each one of the functions. 
\begin{prop}\label{prop: limit sets}
Let $X$ be a second countable space and $f_{j}: X \to X,\
j=1, \ldots, k$
be continuous. Let $\mu \df \sum_j p_{j}\delta_{j},$ 
where $p_{j}>0$ for all $j$ and $\sum p_j =1$, and let
$\mathbb{P}\df \mu^{\bigotimes \mathbb{N}}$. Given $x_{0}\in X$
and $\underline{i}=(i_{1},i_{2},\cdots)\in\{1,\cdots,k\}^{\mathbb{N}}$,
we set $x_{n}(\underline{i})\df f_{i_{n}}\circ\cdots\circ f_{i_{1}}(x_{0})$,
% for all $n\in\mathbb{N}$,
and denote the set of all accumulation
points of $\{x_{n}(\underline{i})\}_{n=1}^{\infty}$ by $L(\underline{i})$.
Then for
$\mathbb{P}$-a.e. $\underline{i}\in\{1,\ldots,k\}^{\mathbb{N}}$ we have 
$f_{j}(L(\underline{i}))\subseteq L(\underline{i})$ for
$j =1, \ldots, k$.
\end{prop}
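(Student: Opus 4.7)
The plan is to apply L\'evy's conditional Borel-Cantelli lemma indexed over a countable base for the topology of $X$, and then exploit continuity of each $f_j$ along well-chosen subsequences of the walk.

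First I fix a countable base $\{U_m\}_{m\in\mathbb{N}}$ for $X$, provided by second countability, and let $\mathcal{F}_n=\sigma(i_1,\ldots,i_n)$. For each $m\in\mathbb{N}$, $j\in\{1,\ldots,k\}$ and $n\geq 1$, set
$$B_n^{(m)}=\{x_n(\underline{i})\in U_m\}\in\mathcal{F}_n,\qquad C_n^{(j)}=\{i_{n+1}=j\}.$$
The event $C_n^{(j)}$ is independent of $\mathcal{F}_n$ with $\mathbb{P}(C_n^{(j)})=p_j>0$, so $\mathbb{P}(B_n^{(m)}\cap C_n^{(j)}\mid\mathcal{F}_n)=p_j\,\mathbf{1}_{B_n^{(m)}}$. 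On the event $\{B_n^{(m)}\text{ i.o.}\}$ one then has $\sum_n\mathbb{P}(B_n^{(m)}\cap C_n^{(j)}\mid\mathcal{F}_n)=\infty$, and L\'evy's conditional Borel-Cantelli lemma yields $\sum_n\mathbf{1}_{B_n^{(m)}\cap C_n^{(j)}}=\infty$ almost surely on that event. Taking a union of the exceptional null sets over the countable collection of pairs $(m,j)$ produces a full-measure subset $\Omega^{\ast}\subseteq\{1,\ldots,k\}^{\mathbb{N}}$ on which the following holds: for every $(m,j)$, whenever $x_n(\underline{i})\in U_m$ for infinitely many $n$, it also holds for infinitely many $n$ satisfying additionally $i_{n+1}=j$.

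Next, fix $\underline{i}\in\Omega^{\ast}$, $j\in\{1,\ldots,k\}$, and $y\in L(\underline{i})$. Since $X$ is second countable it is also first countable, so I pick a decreasing countable neighborhood base $V_1\supseteq V_2\supseteq\cdots$ at $y$, and for each $s$ select a basis element $U_{m(s)}$ with $y\in U_{m(s)}\subseteq V_s$. Because $y$ is an accumulation point, the set $\{n:x_n(\underline{i})\in U_{m(s)}\}$ is infinite for each $s$, so the previous paragraph lets me choose recursively $n_1<n_2<\cdots$ with $x_{n_s}(\underline{i})\in U_{m(s)}\subseteq V_s$ and $i_{n_s+1}=j$. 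The nested $V_s$ force $x_{n_s}(\underline{i})\to y$, and continuity of $f_j$ yields
$$x_{n_s+1}(\underline{i})=f_j\bigl(x_{n_s}(\underline{i})\bigr)\longrightarrow f_j(y),$$
so $f_j(y)\in L(\underline{i})$. Since $y$ and $j$ are arbitrary, $f_j(L(\underline{i}))\subseteq L(\underline{i})$ for each $j$ on the full-measure set $\Omega^{\ast}$.

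The main obstacle is the conditional Borel-Cantelli step: the subsequence of times at which $x_n\in U_m$ is random and depends on the entire past, so one cannot apply the ordinary Borel-Cantelli lemma to transfer the positive density of the symbol $j$ onto that subsequence. The use of the filtration $\mathcal{F}_n$ together with the independence of $C_n^{(j)}$ from $\mathcal{F}_n$ is exactly what circumvents this, and everything else is soft: second countability handles all $y\in L(\underline{i})$ from a single full-measure set, and continuity does the final transfer of convergence.
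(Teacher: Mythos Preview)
Your proof is correct and follows essentially the same strategy as the paper: fix a countable base, show that for each basic open set $U$ and each symbol $j$ the event ``$x_n\in U$ infinitely often but $i_{n+1}=j$ only finitely often along those visits'' has probability zero, take the countable union over $(U,j)$, and then use continuity of $f_j$ along the resulting subsequence to push accumulation points forward. The one difference is how the key probability estimate is obtained: the paper argues by hand, setting up events $M_j$ (the first $j$ visits to $U$ are all followed by a symbol $\neq 1$) and showing inductively that $\mathbb{P}(M_j)\le (1-p_1)^j$, while you invoke L\'evy's conditional Borel--Cantelli lemma to reach the same conclusion in one line. Your use of the lemma is legitimate here since $B_n^{(m)}\cap C_n^{(j)}\in\mathcal{F}_{n+1}$ and $\mathbb{P}(B_n^{(m)}\cap C_n^{(j)}\mid\mathcal{F}_n)=p_j\mathbf{1}_{B_n^{(m)}}$, so both arguments amount to the same idea, with yours packaged via a named theorem and the paper's written out from first principles.
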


For the proof of Proposition \ref{prop: limit sets} we will need the following three
lemmas. We retain the same notation  as in the Proposition. We think
of $\{1, \ldots, k\}^{\mathbb{N}}$ as a 
  probability space and use probabilistic notation when discussing the
  $\mathbb{P}$-measure of subsets of this space. 
\begin{lem}
For
$\underline{i}\in \{1,\ldots,k\}^{\mathbb{N}}$,
$N\in\mathbb{N}$ and $\emptyset\neq U\subseteq X$, let
$K^N_U=K_U^N(\underline{i}) \df \{n>N 
:x_{n}(\underline{i})\in U\}.$ Then 
\begin{equation}
\mathbb{P}\left ( %\left
  % \{\underline{i}\in\{1,\ldots,k\}^{\mathbb{N}}:
  \left|K_{U}^{N}\right|=
  \infty\text{ and }\forall n\in K_{U}^{N},i_{n+1}\neq1 %\right \}
\right)=0.
\end{equation}
\end{lem}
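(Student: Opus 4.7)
The plan is to use a stopping-time argument that exploits the product structure of the Bernoulli measure $\mathbb{P}=\mu^{\otimes\mathbb{N}}$. Let $\mathcal{F}_n\df\sigma(i_1,\ldots,i_n)$ be the natural filtration. Since $x_n(\underline{i})=f_{i_n}\circ\cdots\circ f_{i_1}(x_0)$ depends only on $(i_1,\ldots,i_n)$, the event $\{x_n(\underline{i})\in U\}$ is $\mathcal{F}_n$-measurable. I would enumerate the successive visits to $U$ after time $N$ as
\[
\tau_1\df\min\{n>N:x_n(\underline{i})\in U\},\qquad \tau_{j+1}\df\min\{n>\tau_j:x_n(\underline{i})\in U\},
\]
with the convention $\min\emptyset\df\infty$. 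Each $\tau_j$ is a stopping time for $\{\mathcal{F}_n\}$, and on $\{\tau_j<\infty\}$ its value lies in $K_U^N(\underline{i})$. Moreover, $|K_U^N(\underline{i})|=\infty$ is equivalent to $\tau_j(\underline{i})<\infty$ for every $j$.

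Next, I would introduce, for each $j\geq 1$, the event
\[
B_j\df\{\tau_j<\infty\}\cap\bigcap_{l=1}^{j}\{i_{\tau_l+1}\neq 1\},
\]
and prove by induction that $\mathbb{P}(B_j)\leq (1-p_1)^j$. The key point is the strong Markov property in its simplest form for independent sequences: decomposing $B_j$ as the disjoint union $\bigsqcup_{m}(B_{j-1}\cap\{\tau_j=m\})$ and noting that $B_{j-1}\cap\{\tau_j=m\}\in\mathcal{F}_m$ while $i_{m+1}$ is independent of $\mathcal{F}_m$ with $\mathbb{P}(i_{m+1}\neq 1)=1-p_1$, one obtains
\[
\mathbb{P}(B_j)=(1-p_1)\,\mathbb{P}\bigl(B_{j-1}\cap\{\tau_j<\infty\}\bigr)\leq (1-p_1)\,\mathbb{P}(B_{j-1}).
\]

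Finally, the event in the lemma is contained in $\bigcap_{j\geq 1}B_j$, because $|K_U^N|=\infty$ forces every $\tau_j$ to be finite, and then the hypothesis that $i_{n+1}\neq 1$ for all $n\in K_U^N$ forces $i_{\tau_l+1}\neq 1$ for every $l$. Letting $j\to\infty$ in the inductive bound gives that this event has probability $0$. The argument is essentially routine; the only point that requires any care is the verification that $\tau_j$ is a stopping time and that, on $\{\tau_j=m\}$, the coordinate $i_{m+1}$ is independent of the information available up to time $m$, both of which follow immediately from the product structure of $\beta$ and the fact that $x_n$ is measurable with respect to $(i_1,\ldots,i_n)$.
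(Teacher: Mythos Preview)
Your proposal is correct and follows essentially the same approach as the paper: both arguments enumerate the successive visit times to $U$ (the paper calls them $n_1<n_2<\cdots$, you call them $\tau_1,\tau_2,\ldots$), define the nested events where the first $j$ visits are each followed by a symbol $\neq 1$ (the paper's $M_j$ is your $B_j$), and prove the bound $(1-p_1)^j$ by induction using independence of the $(m{+}1)$-th coordinate from $\mathcal{F}_m$. Your use of the filtration and stopping-time language makes the independence step cleaner and more rigorous than the paper's somewhat informal ``the entry $i_{n_{j+1}+1}$ does not depend on the previous entries,'' but the underlying argument is the same.
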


\begin{proof} We will write the elements of $K_{U}^{N}$ in increasing order, as $N<
n_{1}<n_{2}<\cdots$. For any $j\in\mathbb{N}$, let $M_{j}$ denote the
following event:  
\[
\left|K_{U}^{N}\right|\geq j\text{ and }i_{n+1}\neq1,\,\forall n\in\{n_{1},\ldots,n_{j}\}.
\]
That is, $M_j$ is the set of $\underline{i}$ for which, after the $N$-th step,  the sequence
$x_n(\underline{i})$ visits $U$ at least $j$ times, and the next
element of the sequence $\underline{i}$ following each of the first $j$ visits to $U$,
is not equal to 1. 
Observe that $\mathbb{P}(M_{j+1})=\mathbb{P}(M_{j+1}|M_{j})\mathbb{P}(M_{j})$
because $M_{j+1}\subseteq M_{j}$ for any $j$. We have
\[
\begin{split}
\mathbb{P}\left(M_{j+1}\Bigl|\,M_{j}\right)= &
\mathbb{P}\left(\left|K_{U}^{N}\right|\geq j+1\text{ and }\forall
                                               n\in\left\{
                                               n_{1},\ldots,n_{j+1}\right\}
                                               ,i_{n+1}\neq1\,\Bigl|\,M_{j}\right)
                                               \\ =& 
\mathbb{P}\left(\forall n\in\left\{ n_{1},\ldots,n_{j+1}\right\}
                                                     ,i_{n+1}\neq1\,\Bigl|\,\left|K_{U}^{N}\right|\geq
                                                     j+1\text{ and
                                                     }i_{n+1}\neq1,\,\forall
                                                     n\in\{n_{1},\ldots,n_{j}\}\right)\\
                                                   \times \, &  
\mathbb{P}\left(\left|K_{U}^{N}\right|\geq j+1\,\Bigl|\,M_{j}\right).
\end{split}
\]
Since the entry $i_{n_{j+1}+1}$ does not depend on the previous entries,
\[
\begin{split}
& \mathbb{P}\left(i_{n+1}\neq1,\,\forall
  n\in\{n_{1},\ldots,n_{j+1}\}\,\Bigl|\, \left|K_{U}^{N}\right|\geq
  j+1 \text{ and }i_{n+1}\neq1,\,\forall
  n\in\{n_{1},\ldots,n_{j}\}\right)\\ \leq \, & 
\mathbb{P}\left(i_{n_{j+1}+1}\neq1\right)=1-p_{1}.
\end{split}
\]
Hence, by induction, one has $\mathbb{P}(M_{j})\leq(1-p_{1})^{j}$,
for all $j\in\mathbb{N}$. The conclusion of the lemma is now immediate. 
\end{proof}
Denote by $\mathscr{B}$ a countable base of the topology of $X$.
It follows at once that 
\begin{lem}
\label{lem:inv. of accum. points, countable base}$\mathbb{P}\left(\exists B\in\mathscr{B},\exists N\in\mathbb{N}\text{ s.t. }\left|K_{B}^{N}\right|=\infty\text{ and }\forall n\in K_{B}^{N},i_{n+1}\neq1\right)=0$. 
\end{lem}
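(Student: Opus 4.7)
The plan is to observe that this statement is a purely set-theoretic consequence of the preceding lemma, obtained by taking a countable union. Indeed, the event in question is
\[
\bigcup_{B \in \mathscr{B}} \bigcup_{N \in \mathbb{N}} \left\{ \underline{i} : \left|K_{B}^{N}(\underline{i})\right| = \infty \text{ and } \forall n \in K_{B}^{N}(\underline{i}),\, i_{n+1} \neq 1 \right\}.
\]
The previous lemma, applied with $U = B$, tells us that each individual event in the union has $\mathbb{P}$-measure zero.

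The key point to be used is that $X$ is second countable, so the base $\mathscr{B}$ can be chosen to be countable; together with the countability of $\mathbb{N}$, this makes the index set $\mathscr{B} \times \mathbb{N}$ countable. Then countable subadditivity of $\mathbb{P}$ gives that the union has measure zero, as claimed.

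Since the argument is just a countable union of null sets, there is no real obstacle; the lemma is merely the packaged form of its predecessor that will be convenient in the proof of Proposition \ref{prop: limit sets}, where one needs a single $\mathbb{P}$-null exceptional set rather than one depending on a choice of $B$ and $N$.
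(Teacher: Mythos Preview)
Your proposal is correct and matches the paper's approach exactly: the paper simply says ``It follows at once that'' after the preceding lemma, and you have spelled out the one-line reason --- a countable union over $\mathscr{B}\times\mathbb{N}$ of the null sets provided by that lemma.
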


\begin{lem}
\label{lem:inv. of accum. points}Fix $\underline{i}\in\{1,\ldots,k\}^{\mathbb{N}}$
and $a\in L(\underline{i})$. If $f_{1}(a)\notin L(\underline{i})$
then there exist $B\in\mathscr{B}$ containing $a$, and $N\in\mathbb{N}$ such that
$\left|K_{B}^{N}\right|=\infty\text{ and }\forall n\in
K_{B}^{N},i_{n+1}\neq1$.  
\end{lem}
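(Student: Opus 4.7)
The plan is to use continuity of $f_1$ together with the hypothesis $f_1(a)\notin L(\underline{i})$ to show that whenever the trajectory visits a sufficiently small neighborhood of $a$ past a certain index, the next step cannot be $f_1$, since that would land near $f_1(a)$.

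First, since $f_1(a)\notin L(\underline{i})$, by definition there exists an open neighborhood $V$ of $f_1(a)$ such that the set $S\df\{m\in\mathbb{N}:x_m(\underline{i})\in V\}$ is finite. Next, by continuity of $f_1$ at $a$, there exists an open neighborhood $W$ of $a$ with $f_1(W)\subseteq V$. Because $\mathscr{B}$ is a base for the topology of $X$, we can pick $B\in\mathscr{B}$ with $a\in B\subseteq W$; in particular $f_1(B)\subseteq V$.

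Now set $N\df \max S$ (or $N=1$ if $S$ is empty). Since $a\in L(\underline{i})$, infinitely many $n$ satisfy $x_n(\underline{i})\in B$, and discarding finitely many indices bounded by $N$ still leaves infinitely many, so $|K_B^N|=\infty$. Moreover, if $n\in K_B^N$ and $i_{n+1}=1$, then $x_{n+1}(\underline{i})=f_1(x_n(\underline{i}))\in f_1(B)\subseteq V$, forcing $n+1\in S$ and hence $n+1\leq N$, which contradicts $n>N$. Therefore $i_{n+1}\neq 1$ for every $n\in K_B^N$, as required.

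There is really no substantive obstacle: the argument is a routine topology and continuity exercise. The only mild subtlety is keeping track of which neighborhood comes from the countable base $\mathscr{B}$ (so that the preceding lemma applies) rather than from continuity of $f_1$ — this is handled by first producing an arbitrary neighborhood $W$ via continuity and then shrinking to a basic open set $B\subseteq W$ containing $a$.
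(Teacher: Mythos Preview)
Your proof is correct and uses essentially the same idea as the paper: continuity of $f_1$ forces any visit near $a$ followed by the symbol $1$ to land near $f_1(a)$, which can happen only finitely often since $f_1(a)\notin L(\underline{i})$. The paper phrases this as a proof by contradiction (negating the conclusion yields a subsequence $x_{n_k}\to a$ with $i_{n_k+1}=1$, hence $x_{n_k+1}\to f_1(a)$), whereas you give the direct construction of $B$ and $N$; the content is the same.
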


\begin{proof}
Assume the contrary. That leads to a subsequence
$\{x_{n_{k}}(\underline{i})\}_{k=1}^{\infty}$ 
converging to $a$ such that $i_{n_{k}+1}=1$, for all $k\in\mathbb{N}$.
This shows that $f_{1}(x_{n_{k}})=x_{n_{k}+1}$ for any $k$ and hence,
from the continuity of $f_{1}$, one obtains $x_{n_{k}+1}\rightarrow f(a)$
as $k\rightarrow\infty$. Thus $f_{1}(a)\in L(\underline{i})$ which
contradicts our assumption. 
\end{proof}
\begin{proof}[Proof of Proposition \ref{prop: limit sets}]
Combining Lemmas \ref{lem:inv. of accum. points, countable base}
and \ref{lem:inv. of accum. points} we obtain that
$f_{1}(L(\underline{i}))\subseteq L(\underline{i})$ 
for $\mathbb{P}$-almost all $\underline{i}\in\{1,\ldots,k\}^{\mathbb{N}}$.
By similar arguments, one can prove the same for any $f_{j}$,
$j=1,\ldots,k$.
\end{proof}
\section{Random walks on tori}
In this section we asume the notations and assumptions of Theorem
\ref{thm:unique stationary measures 
  T^n}: we are given $n$ commuting expanding 
$d\times d$ integer matrices $D_{1},...,D_{n}$,
real numbers $\alpha_1, \ldots, \alpha_n$, and define the affine
endomorphisms $h_1, \ldots, h_n$ by $h_i(x) = D_i(x)+ \alpha_i$. We
are also given a probability
measure $\mu$ whose support is the finite set $\left\{
  h_{1},...,h_{n}\right\}$. We set $\beta = \mu^{\otimes \mathbb{N}}$, and  
assume that 
\eq{eq: irrationality cond2}{
\left\{
  \left(\mathbb{I}_{d}-D_{i}\right)\alpha_{j}-\left(\mathbb{I}_{d}-D_{j}\right)\alpha_{i}:
  \, 
  1\leq i,j  \leq n \right\} \text{ is not contained in a proper
closed subgroup of } \mathbb{T}^{d}.
}

We need the following Lemma. 
\begin{lem}
\label{lem:multidim accumulation points}If a finite set
$\{\mathbf{z}_{1},\mathbf{z}_{2},\ldots,\mathbf{z}_{\ell}\} \subseteq
\mathbb{T}^{d}$ is not contained in a proper closed
subgroup of $\mathbb{T}^{d}$, then for $\beta$-a.e.
$\underline{i}=(i_{1},i_{2},\ldots)\in\{1,\ldots,n\}^{\mathbb{N}}$ 
the set of all accumulation points of 
$$\mathscr{S}(\underline{i}) \df\{D_{i_{m}} \circ \cdots \circ 
D_{i_{1}}(\mathbf{z}_{j}) :m\in\mathbb{N},1\leq j\leq\ell\}$$
is not contained in a proper closed subgroup of $\mathbb{T}^{d}$. 
\end{lem}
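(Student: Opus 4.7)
The plan is to use Pontryagin duality to reduce to a statement about a single character, apply Proposition~\ref{prop: limit sets} to propagate an assumed constraint through iterated characters, and finally invoke a Hewitt-Savage 0-1 law together with the expanding/commuting structure to reach a contradiction with the hypothesis.

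First, every proper closed subgroup of $\mathbb{T}^{d}$ is contained in $\ker\chi_\xi$ for some $\xi\in\mathbb{Z}^{d}\setminus\{0\}$, where $\chi_\xi(x)=e^{2\pi i\langle\xi,x\rangle}$. Since $\mathbb{Z}^{d}\setminus\{0\}$ is countable, the conclusion reduces to showing that for each fixed such $\xi$ the event
\[
E_\xi \df \{\underline{i}\in B : L(\underline{i})\subseteq \ker\chi_\xi\}
\]
has $\beta$-measure zero.

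Next, applying Proposition~\ref{prop: limit sets} separately with each $\mathbf{z}_j$ as initial point and with the continuous endomorphisms $D_1,\dots,D_n$, I obtain that $\beta$-a.e.\ $\underline{i}$ satisfies $D_s(L_j(\underline{i}))\subseteq L_j(\underline{i})$ for all $s,j$, so $L(\underline{i})=\bigcup_j L_j(\underline{i})$ is itself $D_s$-invariant for each $s$. On $E_\xi$, iterating this invariance yields $L(\underline{i})\subseteq\ker\chi_{D_w^T\xi}$ for every finite word $w$ in $\{1,\dots,n\}^*$. By commutativity of the $D_s^T$'s, this means $L(\underline{i})\subseteq H_\xi\df \bigcap_{\eta\in\Lambda_\xi}\ker\chi_\eta$, a fixed proper closed subgroup of $\mathbb{T}^{d}$ (independent of $\underline{i}$), where $\Lambda_\xi\subseteq\mathbb{Z}^{d}$ is the $\{D_s^T\}$-invariant sublattice generated by $\xi$.

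Passing to the nontrivial quotient torus $\mathbb{T}^{d'}\df\mathbb{T}^{d}/H_\xi$, $E_\xi$ forces that a.s.\ on $E_\xi$, the images satisfy $\bar D_{i_m}\circ\cdots\circ \bar D_{i_1}(\bar{\mathbf{z}}_j)\to 0$ in $\mathbb{T}^{d'}$ for every $j$, where the $\bar D_s$'s act as commuting expanding endomorphisms. Since the $\bar D_s$'s commute, this convergence event depends on $\underline{i}$ only through the symbol counts $k_s(m)=\#\{k\le m:i_k=s\}$; hence it is invariant under finite permutations of $\underline{i}$, so by the Hewitt-Savage 0-1 law, $\beta(E_\xi)\in\{0,1\}$. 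Supposing $\beta(E_\xi)=1$ for contradiction, I would then argue that a.s.\ convergence of the orbit to $0$ under the random walk by commuting expanding endomorphisms of a nontrivial torus forces each $\bar{\mathbf{z}}_j$ to be torsion in $\mathbb{T}^{d'}$. Consequently $N\mathbf{z}_j\in H_\xi$ for some integer $N$ (take the least common multiple), and the proper closed subgroup $N^{-1}(H_\xi)\subsetneq\mathbb{T}^{d}$ contains every $\mathbf{z}_j$, contradicting the hypothesis.

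\textbf{Main obstacle.} The crux is the final Diophantine rigidity step: showing that a.s.\ convergence $\bar D_{i_m}\cdots\bar D_{i_1}\bar z\to 0$ under the commuting expanding random walk on $\mathbb{T}^{d'}$ forces $\bar z$ to be torsion. This requires combining the expansion of the $\bar D_s$'s with the fact (by the law of large numbers) that every symbol appears with positive density in $\underline{i}$ a.s.; the heart is ruling out irrational $\bar z$, for which the rational approximants $\bar D_1^{-k_1(m)}\cdots\bar D_n^{-k_n(m)}v_m$ of typical lifts would have to converge to the same real vector along $\beta$-a.e.\ path, a rigidity condition in the spirit of Furstenberg's $\times 2,\times 3$ phenomenon.
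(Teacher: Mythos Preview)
Your overall strategy parallels the paper's: reduce to a single fixed proper closed subgroup (the paper does this by invoking the countability of closed subgroups of $\mathbb{T}^d$ directly rather than via characters, but the effect is the same), use Proposition~\ref{prop: limit sets} to make the limit set $D_s$-invariant, pass to the quotient torus, observe that the projected orbits must converge to $0$, and derive a contradiction with the hypothesis on the $\mathbf{z}_j$. The Hewitt--Savage step is a detour; the paper's endgame is entirely deterministic and applies to each individual $\underline{i}$ in the bad set.

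The real gap is exactly your ``main obstacle'', and you have significantly overestimated its depth. No law of large numbers, and nothing resembling Furstenberg's $\times 2,\times 3$ phenomenon, is required. The paper settles it with an elementary expansion argument. By Lemma~\ref{lem:expansion implies existence of a norm} there is a norm on the universal cover $\mathbb{R}^{d'}$ of $\mathbb{T}^{d'}$ for which every induced linear map $\bar D_s$ expands by at least $\rho>1$, with operator norms bounded by some $R$. Choose a ball $\mathfrak{B}$ about $0$ on which the covering map $\pi$ is injective. If $\bar D_{i_m}\cdots\bar D_{i_1}\bar z_j\to 0$, then for all $m\ge N$ the orbit lies in $\pi(\tfrac{1}{R}\mathfrak{B})$ and lifts uniquely to $\mathbf{t}_m\in\tfrac{1}{R}\mathfrak{B}$. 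If $\mathbf{t}_N\neq 0$, then since each application of $\bar D_{i_k}$ multiplies the norm by a factor in $[\rho,R]$, some iterate $\bar D_{i_{N+s}}\cdots\bar D_{i_{N+1}}\mathbf{t}_N$ lands in the annulus $\mathfrak{B}\setminus\tfrac{1}{R}\mathfrak{B}$; injectivity of $\pi|_{\mathfrak{B}}$ then forces the orbit at step $N+s$ out of $\pi(\tfrac{1}{R}\mathfrak{B})$, contradicting the choice of $N$. Hence the orbit actually hits $0$, so $\bar z_j$ lies in the finite kernel of the surjective endomorphism $\bar D_{i_N}\cdots\bar D_{i_1}$ and is torsion. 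Pulling back gives the desired proper closed subgroup containing all the $\mathbf{z}_j$. In short, the missing step is just the observation that an expanding toral endomorphism cannot have a nonzero orbit converging to $0$.
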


\begin{proof}
Denote the set of $\underline{i}=(i_{1},i_{2},\cdots)\in\{1,\ldots,n\}^{\mathbb{N}}$
for which the conclusion fails by
$\Lambda_{0}$, and assume by contradiction that $\beta(\Lambda_0)>0$. 
For $\underline{i}\in\Lambda_{0}$, let us denote the closed subgroup
generated by all the accumulation points of $\mathscr{S}(\underline{i})$
by $K(\underline{i})$. Since a torus contains countably many closed
subgroups $K$, we can pass to a subset of $\Lambda_0$ (which we
continue to denote by $\Lambda_0$) such that $K = K(\underline{i})$ is
the same for all $\underline{i}$.
Let $p : \mathbb{R}^d \to \mathbb{T}^d$ be the natural projection
map. Then $p^{-1}(K)$ is a closed subgroup of
$\mathbb{R}^{d}$. Let $S$ be the connected component of the identity
in $p^{-1}(K)$, i.e. the largest subspace contained
in $p^{-1}(K)$. Then $S$ is a rational subspace of $\mathbb{R}^d$,
$\dim S <d$,  and
$p^{-1}(K)/S$ is discrete in $\mathbb{R}^{d}/S$. Consider the
following commutative diagram, where the vertical maps 
are the canonical projection maps and $\bar p$
is the map induced by $p$:
\[
\xymatrix{\mathbb{R}^{d}\ar[r]^{p}\ar[d] & \mathbb{T}^{d}\ar[d]\\
\mathbb{R}^{d}/p^{-1}(K)\ar[r]^{\bar p} & \mathbb{T}^{d}/K
}
\]

\noindent The diagram shows that
$(\mathbb{R}^{d}/S)/(p^{-1}(K)/S)\cong\mathbb{R}^{d}/p^{-1}(K)$ 
is isomorphic to $\mathbb{T}^{d}/K$. Hence $\mathbb{T}^{d}/K$ is
a torus of dimension $d-\dim S$, $\mathbb{R}^d/S$ is the universal cover of
$\mathbb{T}^d/K$,  and the covering map is given by  
\[
\pi:\mathbb{R}^{d}/S\longrightarrow\mathbb{T}^{d}/K,\ \ \ \pi(\mathbf{x}+S)\df
p(\mathbf{x})+K.
\]

Denote by $\mathscr{S}'(\underline{i})$ the set of accumulation
points of $\mathscr{S}(\underline{i})$, and write
$\mathscr{S}'(\underline{i})={\displaystyle
  \bigcup_{j=1}^{k}\mathscr{S}_{j}'(\underline{i})}$, 
where $\mathscr{S}_{j}'(\underline{i})$ is the set of accumulation
points of $\{D_{i_{m}}\cdots
D_{i_{1}}\mathbf{z}_{j}\in\mathbb{T}^{d}:m\in\mathbb{N}\}$, 
for fixed $j \in \{1, \ldots, \ell\}$. It follows from Proposition \ref{prop: limit sets} that
for all $j$, there is a subset of $\Lambda_0$ of full measure, of
$\underline{i}$ for which 
$D_{1}(\mathscr{S}_{j}'(\underline{i})),\ldots,D_{n}(\mathscr{S}_{j}'(\underline{i}))$ 
are all contained in $\mathscr{S}_{j}'(\underline{i})$. We replace $\Lambda_0$
with its subset of full measure for which this holds for all $j$ (and continue to
denote this set by $\Lambda_0$). For 
each $\underline{i} \in \Lambda_0$ and each $r \in \{1, \ldots, n\}$, we have
$D_{r}(K)\subseteq K$. Since $S$ is the
largest subspace contained in $p^{-1}(K)$, this implies
$D_{r}(S)\subseteq S$. Being an expanding map, $D_r$ is invertible and
hence it preserves dimensions of linear spaces, and so we must have
$D_r(S)=S$ (recall that we denote by $D_r$ both a toral endomorphism
and the corresponding linear transformation).  Therefore we may view
$D_{r}$ as inducing a map on  
both $\mathbb{T}^{d}/K$ and $\mathbb{R}^{d}/S$, which we continue to
denote by $D_r$, and we have the
following commutative diagram:
%It is easy to see that $D(\mathscr{S}')=\mathscr{S}'$, where  This implies that $D(K)=K$ and, as a consequence of this, we have $D(p^{-1}(K))=p^{-1}(K)$, and $D(S)=S$ for $S$ being the largest subspace contained in $p^{-1}(K)$., which are connected as given by the following commutative diagram. 
\[
\xymatrix{\mathbb{R}^{d}/S\ar[r]^{ D_{r}}\ar[d]_{\pi} &
  \mathbb{R}^{d}/S\ar[d]^{\pi}
  % & x+S\ar@{|->}[r]\ar@{|->}[d] & D_{r}x+S\ar@{|->}[d]
  \\
\mathbb{T}^{d}/K\ar[r]^{D_{r}} & \mathbb{T}^{d}/K %& p\left(x\right)+S\ar@{|->}[r] & D_{r}p\left(x\right)+S=p\left(D_{r}x\right)+S
}
\]
We have that both horizontal maps in this diagram are surjective endomorphisms, 
$D_r$ commutes with the natural projection
map from $\mathbb{T}^{d}\longrightarrow\mathbb{T}^{d}/K$, and the projection
of $\mathscr{S}(\underline{i})$ on $\mathbb{T}^{d}/K$ has $\overline{\mathbf{0}}=K$
as the only accumulation point for all $\underline{i}\in\Lambda_{0}$.

Fix  $\underline{i}=(i_{1},i_{2}\cdots)\in\Lambda_{0}$. 
We first observe that the
projections $\overline {\mathbf{z}}_{j}$ of $\mathbf{z}_{j}$ in
$\mathbb{T}^d/K$ satisfy
$$D_{i_{m}}\ldots
D_{i_{1}}\overline{\mathbf{z}_{j}}\neq\overline{\mathbf{0}}, \ \ \text{
  for some } j \in \{1, \ldots, \ell\} \text{ and 
  all } m\in\mathbb{N}.$$
Otherwise, there
would exist $M\in\mathbb{N}$ for which $D_{i_{M}}\circ \cdots \circ
D_{i_{1}}\overline{\mathbf{z}_{j}}=\overline{\mathbf{0}}$, 
for all $j\in\{1,2,\ldots,\ell\}$. We then consider the closed subgroup
in $\mathbb{T}^{d}/K$ generated by $\{\overline{\mathbf{z}_{i}}:1\leq i\leq\ell\}$.
It is obvious that this subgroup is contained in the kernel of the
surjective endomorphism $D_{i_{M}}\cdots
D_{i_{1}}:\mathbb{T}^{d}\longrightarrow\mathbb{T}^{d}$. 
Since the kernel is finite, so is the subgroup we considered and hence
proper in $\mathbb{T}^{d}/K$. Pulling it back to $\mathbb{T}^{d}$,
one obtains a proper closed subgroup that contains both $K$ and all
$\overline{\mathbf{z}_{j}}$'s. This contradicts our hypothesis. Since
$\overline{\mathbf{0}}$ is the only accumulation point of the sequence
$\{D_{i_{m}}\cdots D_{i_{1}}\overline{\mathbf{z}_{i}}\}_{m=0}^{\infty}$,
it follows from compactness that ${\displaystyle \lim_{m\rightarrow\infty}D_{i_{m}}\cdots D_{i_{1}}\overline{\mathbf{z}_{j}}=\overline{\mathbf{0}}}$
in $\mathbb{T}^{d}/K$, for each $j$. \\

Our next observation is as follows. If we choose a basis of $S$ and
then extend it to a basis $\mathcal{B}$ of $\mathbb{R}^{d}$, then for each $r$, the
matrix representation of $D_{r}$, with respect to this basis, is of the form 
\[
\left(\begin{array}{cc}
D_{S}^{(r)} & \ast \\
0 & D^{(r)}
\end{array}\right).
\]
By hypothesis, all eigenvalues
of $D_{S}^{(r)}$ and $D^{(r)}$ have absolute value $>1$. The matrix
representing $D_r$ with respect to the projection of $\mathcal{B}$ to
$\mathbb{R}^d/S$ is $D^{(r)}$. 
Using Lemma \ref{lem:expansion implies existence of a norm}, there is
a norm $||\cdot||$ on $\mathbb{R}^{d}/S$
such that 
\[
{\displaystyle \rho\df\inf_{1\leq r\leq n,\,\mathbf{v}\in\mathbb{R}^{d}/S,\,||\mathbf{v}||=1}||D_{r}\mathbf{v}||>1.}
\]
%As a consequence, each $D_{r}$ is expanding as a map 
%on $\mathbb{R}^{d}/S$.
We let $||D_{r}||_{\mathrm{op}}$
denote the operator norm of the linear map  $D_{r}$,
in $\mathbb{R}^{d}/S$ with respect to
the norm on $\mathbb{R}^{d}/S$ chosen above, and let  $R\df\max_{1\leq
  r\leq k}||D_{r}||_{\mathrm{op}}$. 

As $\pi$ is a covering homomorphism, we can take a small enough open
ball $\mathfrak{B}$ in $\mathbb{R}^{d}/S$ centered at $\mathbf{0}$
which evenly covers $\pi(\mathfrak{B})$, i.e. 
$\pi|_{\mathfrak{B}}$ is a homeomorphism onto its image. 
Since ${\displaystyle \lim_{m\rightarrow\infty}D_{i_{m}}\cdots
  D_{i_{1}}\overline{\mathbf{z}_{j}}=\overline{\mathbf{0}}}$, 
there is $N\in\mathbb{N}$ such that $D_{i_{m}}\cdots D_{i_{1}}\overline{\mathbf{z}_{j}}$
lies in $\pi\left(\frac{1}{R}\mathfrak{B}\right)$ for all $m\geq N$.
We denote the lift of $D_{i_{m}}\cdots D_{i_{1}}\overline{\mathbf{z}_{j}}$
in $\frac{1}{R}\mathfrak{B}$ by $\mathbf{t}_{m}$, for all $m\geq N$.
Note that $\mathbf{t}_{m}\neq\overline{\mathbf{0}}$ for any $m\geq
N$. Since each application of $D_r$ increases the norm of a nonzero
vector by at least
$\rho$ and at most $R$, there exists $s\in\mathbb{N}$ such that $D_{i_{m+s}}\cdots
D_{i_{m+1}}\mathbf{t}_{N}\in\mathfrak{B}\setminus\frac{1}{R}\mathfrak{B}$. Since
$\pi|_{\mathfrak{B}}$ evenly covers its 
image, we see that $\pi(D_{i_{N+s}}\cdots
D_{i_{N+1}}\mathbf{t}_{N})\in\pi(\mathfrak{B})\setminus\pi\left(\frac{1}{R}
  \mathfrak{B}\right)$, i.e.,
%On the other hand, 
%$\pi(D_{i_{N+s}}\cdots D_{i_{N+1}}\mathbf{t}_{N})=D_{i_{N+s}}\cdots D_{i_{N+1}}\pi(\mathbf{t}_{N})$
%is nothing but $D_{i_{N+s}}\cdots D_{i_{N+1}}(D_{i_{N}}\cdots D_{i_{1}}\overline{\mathbf{z}_{j}})=D_{i_{N+s}}\cdots D_{i_{N+1}}D_{i_{N}}\cdots D_{i_{1}}\overline{\mathbf{z}_{j}}$.
%This leads to a contradiction as
$D_{i_{N+s}}\cdots D_{i_{N+1}}D_{i_{N}}\cdots
D_{i_{1}}\overline{\mathbf{z}_{j}}\notin\pi\left(\frac{1}{R}\mathfrak{B}\right)$. This
contradicts our choice of $N$.
%Our lemma is hereby established. 
\end{proof}

We are now ready for the 
%It is now time to start the proof of Theorem \ref{thm:unique stationary measures T^n}: 
\begin{proof}[Proof of Theorem \ref{thm:unique stationary measures
    T^n}]
A straightforward induction shows that for any finite sequence
$j_{1},j_{2},\ldots,j_{m}\in\{1,2,\ldots,n\}$, 
\begin{equation}
h_{j_{1}}\circ\,\cdots\circ\,h_{j_{m}}(\mathbf{x})=D_{j_{1}}\circ
\cdots \circ
D_{j_{m}}(\mathbf{x}) +\sum_{s=1}^{m}D_{j_{1}}\circ \cdots \circ D_{j_{s-1}}(\alpha_{j_{s}})
\end{equation}
(with $+$ denoting addition in $\mathbb{T}^d$). 
From this it is clear that, for any
finite sequence $j_{1},\ldots,j_{m}$ and any pair of indices $\ell,s\in\{1,2,\ldots,n\}$,
one has
\noindent 
\begin{equation}
\begin{split}
&
h_{j_{1}}\circ\,\cdots\circ\,h_{j_{m}}\circ\,h_{\ell}\circ\,h_{s}(\mathbf{x})\\=\ &
h_{j_{1}}\circ\,\cdots\circ\,h_{j_{m}}\circ\,h_{s}\circ\,h_{\ell}(\mathbf{x})+D_{j_{1}}\circ
\cdots \circ 
D_{j_{m}}((\mathbb{I}_{d}-D_{\ell})(\alpha_{s})-(\mathbb{I}_{d}-D_{s})(\alpha_{\ell})).
\end{split}\label{above1}
\end{equation}
For a given vector $\mathbf{a}\in\mathbb{T}^{d}$, let
$$R_{\mathbf{a}}: \mathbb{T}^d \to \mathbb{T}^d, \ \ R_{\mathbf{a}}(x)
\df  x+ \mathbf{a}.$$
We also denote, for $m \in \mathbb{N}$ and $\underline{j} \in \{1,
\ldots, n\}^{\mathbb{N}}$, 
\[
\mathbf{a}_{m, \underline{j}}^{\ell,s}\df D_{j_{1}}\circ \cdots\circ
D_{j_{m}}((\mathbb{I}_{d}-D_{\ell})(\alpha_{s})-(\mathbb{I}_{d}-D_{s})
(\alpha_{\ell})).
\]
With these notations we can rewrite  (\ref{above1})  as 
\begin{equation}
h_{j_{1}}\circ\cdots\circ h_{j_{m}}\circ
h_{\ell}\circ\,h_{s}(\mathbf{x})=R_{\mathbf{a}_{m,
    \underline{j}}^{\ell,s}} \circ h_{j_{1}}\circ\cdots\circ
h_{j_{m}}\circ h_{s} \circ\,h_{\ell}(\mathbf{x}).\label{rewritten}
\end{equation}
\\
 Suppose now that $\nu$ is a $\mu$-stationary measure on $\mathbb{T}^{d}$.
From (\ref{rewritten}) and Proposition \ref{thm:Furstenberg}(3) we
obtain that there is a subset $B_0 \subset B$, with $\beta(B_0) =1$,
such that for any $b \in B_0$ and 
any $\ell,s\in\{1,2,\ldots,n\}$, 
\eq{eq: before continuity}{
\nu_{b}=\lim_{m\rightarrow\infty}(b_{1}\circ\,\cdots\circ\,b_{m}\circ
\,h_{\ell}\circ\,h_{s})_{*}\nu=\lim_{m\rightarrow\infty}\left(R_{\mathbf{a}_{m,
  \underline{j}}^{\ell,s}} \right)_{*}
(b_{1}\circ\,\cdots\circ\,b_{m}\circ\,h_{s}\circ h_{\ell})_{*}\nu\,.
}
Here we identify $B$ with $\{1, \ldots, n\}^{\mathbb{N}}$ in the obvious way
and use the same notation $\beta$ for the Bernoulli measure on the symbol
space $\{1, \ldots, n\}^{\mathbb{N}}$, and identify $\underline{j}$
with $b$; this should cause no
confusion.
Let $\mathrm{Prob}(\mathbb{T}^d)$ denote the space of Borel
probability measures on $\mathbb{T}^d$, equipped with the weak-*
topology. 
The addition map $$\mathbb{T}^d \times \mathbb{T}^d \to \mathbb{T}^d,\ \
\ (\mathbf{a}, x) \mapsto R_{\mathbf{a}}(x) 
$$
is continuous,
and thus the induced map
$$\ \mathbb{T}^d \times
\mathrm{Prob}(\mathbb{T}^d) \to \mathrm{Prob}(\mathbb{T}^d),\  \ \
(\mathbf{a}, \theta) \mapsto 
(R_{\mathbf{a}})_* \theta$$
is also continuous. Thus, by passing to subsequences and using
\equ{eq: before continuity}, we find that
for any $b \in B_0$,
for any accumulation point $\mathbf{a}'$ of  the sequence
$\{\mathbf{a}_{m, \underline{j}}^{\ell,s}\}_{m=0}^{\infty}$, 
the measure $\nu_{b}$ is invariant under $R_{\mathbf{a}'}$. 

We now appeal to condition \equ{eq: irrationality cond2} and Lemma
\ref{lem:multidim accumulation points},  which ensure that for
$\beta$-a.e. $\underline{j}\in\{1,\ldots ,n\}^{\mathbb{N}}$,
the accumulation points of the set
$$\mathbf{A}_{\underline{j}} \df \left\{
  \mathbf{a}_{m, \underline{j}}^{\ell,s}:\,\ell,s\in\left\{ 1,...,n\right\}
  ,\,m\in\mathbb{N}\right\}$$
generate a dense subgroup of $\mathbb{T}^{d}$.
Upon possibly  replacing $B_0$ with its subset, we still
have $\beta(B_0) =1$, and for all $\underline{j} \in B_0$ we also have that the
accumulation points of $\mathbf{A}_{\underline{j}}$ generate a dense subgroup of
$\mathbb{T}^d$. We obtain that for $b \in B_0$,
$\nu_{b}$ is invariant under a dense subgroup of $\mathbb{T}^d$, and
since the stabilizer of a measure is a closed subgroup, that
$\nu_b$ is the
Haar measure on $\mathbb{T}^{d}$. The conclusion of Theorem
\ref{thm:unique stationary measures T^n} now follows from Proposition
\ref{thm:Furstenberg}(2). 
\end{proof}

\subsection{Further remarks.}
We first note that in the one-dimensional case, a stronger version of
Lemma \ref{lem:multidim accumulation points} is true:  its conclusion
holds for {\em every} sequence
$\underline{i}=(i_{1},i_{2},\cdots)\in\{1,\cdots,k\}^{\mathbb{N}}$.  
\begin{lem}
Let $\alpha\in\mathbb{T\setminus}\mathbb{Q}$, and let $D_{1},...,D_{n}
\geq 2$ be integers, which we think of also as toral endomorphisms $D_i: \mathbb{T}
\to \mathbb{T}$. Given a sequence $i_{1},i_{2},...\in\left\{
  1,...,n\right\} ^{\mathbb{N}},$ 
denote $x_{k} \df D_{i_{1}} \circ \cdots \circ D_{i_{k}}(\alpha
)$. Then the set of all accumulation points of the sequence
$\left(x_{k}\right)_{k\in\mathbb{N}}$ is infinite.%
\end{lem}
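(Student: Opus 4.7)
My plan is to argue by contradiction: assume that the accumulation set $F$ of $(x_k)_{k\in\mathbb{N}}$ is finite and derive a contradiction from the irrationality of $\alpha$. Note first that since the $D_i$ are commuting integer multiplications on $\mathbb{T}$, we have $x_{k+1}=D_{i_{k+1}} x_k$ and $x_k = N_k\alpha \pmod 1$ where $N_k\df D_{i_1}\cdots D_{i_k}$ is a strictly increasing sequence of integers. In particular the $x_k$ are pairwise distinct, for otherwise $\alpha$ would be rational.

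Set $C\df \max_j D_j$ and pick $\delta>0$ small enough to satisfy three conditions: (i) $(C+1)\delta$ is smaller than the minimum distance between distinct elements of $F$; (ii) $(C+1)\delta$ is smaller than the minimum distance from any element of $\bigcup_j D_j(F)\setminus F$ to $F$ (vacuous if this set is empty, and positive otherwise since $F$ is finite); and (iii) $C\delta<1/2$, so that multiplication by any $D_j$ on a $\delta$-ball about any point acts as an exact $D_j$-fold expansion of distances on $\mathbb{T}$. There exists $K_0$ such that $x_k$ lies within $\delta$ of $F$ for every $k\geq K_0$; let $\phi(k)\in F$ be the unique nearest element.

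The central step is the recursion $\phi(k+1)=D_{i_{k+1}}\phi(k)$ in $\mathbb{T}$. Using (iii), the distance $d(x_{k+1},D_{i_{k+1}}\phi(k))$ equals $D_{i_{k+1}}d(x_k,\phi(k))<C\delta$; combined with $d(x_{k+1},\phi(k+1))<\delta$, the triangle inequality places $D_{i_{k+1}}\phi(k)$ within $(C+1)\delta$ of $F$. Condition (ii) then forces $D_{i_{k+1}}\phi(k)\in F$, and condition (i) identifies this element with $\phi(k+1)$. Consequently the signed error $\epsilon_k\df x_k-\phi(k)\in\mathbb{T}$ satisfies $\epsilon_{k+1}=D_{i_{k+1}}\epsilon_k$; by (iii) this passes to the numerical identity $|\epsilon_{k+1}|=D_{i_{k+1}}|\epsilon_k|\geq 2|\epsilon_k|$. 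Iterating yields $|\epsilon_{K_0+m}|\geq 2^m|\epsilon_{K_0}|$, and since these errors are bounded by $\delta$, we must have $\epsilon_{K_0}=0$; but then $\epsilon_k=0$ for every $k\geq K_0$, so $x_k\in F$ for every such $k$, contradicting the fact that $(x_k)$ takes infinitely many distinct values while $F$ is finite.

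The main subtlety is condition (ii): it is what upgrades the coarse observation that ``$\phi(k+1)$ is near $D_{i_{k+1}}\phi(k)$'' to the exact identity needed so that $|\epsilon_{k+1}|/|\epsilon_k|$ is a precise integer factor $\geq 2$, which in turn drives the doubling argument. Once this is in place, the contradiction reduces to the familiar principle that an irrational angle cannot be arbitrarily well approximated by a sequence of rational numbers whose denominators grow by bounded multiplicative factors.
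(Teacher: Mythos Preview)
Your proof is correct and follows essentially the same approach as the paper's (commented-out) argument: assume the accumulation set $F$ is finite, show that for large $k$ the nearest-point map $\phi$ satisfies $\phi(k+1)=D_{i_{k+1}}\phi(k)$, and then use the exact expansion $|\epsilon_{k+1}|=D_{i_{k+1}}|\epsilon_k|\geq 2|\epsilon_k|$ to force $\epsilon_{K_0}=0$, contradicting irrationality. Your condition (ii) is a clean device to secure $D_{i_{k+1}}\phi(k)\in F$ directly; the paper instead first argues that such an invariance forces $F\subset\mathbb{Q}$ via a cycle in $\phi$, but the two routes are equivalent in spirit.
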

We will not be using this result, and we leave the proof to the
reader. 

The following shows that condition \equ{eq: irrationality cond2}
cannot be relaxed, at least in case $d=1$: 
%Looking at the proof of Theorem \ref{thm:unique stationary measures T^n},
%it may seem like the condition that the set 
%\[
%\left\{ \left(\mathbb{I}_{d}-D_{i}\right)\alpha_{j}-\left(\mathbb{I}_{d}-D_{j}\right)\alpha_{i}:\,i,j\in\left\{ 1,...,n\right\} \right\} 
%\]
%is not contained in any proper closed subgroup of $\mathbb{T}^{d}$
%is too strong and even if it doesn't hold it may be possible to find
%other (longer) words $h_{l_{1}}\cdots h_{l_{t}},\,h_{s_{1}}\cdots h_{s_{p}}$%
%whose linear parts are equal, which we can use in equation (\ref{above1})
%instead of $h_{i}h_{j}$ and $h_{j}h_{i}$. However, it turns out
%that the condition in the theorem is in fact optimal, as may be seen
%in the following proposition for the 1-dim case. 
\begin{prop}\label{prop: when the condition does not hold}
Assume that for some $i_{0}\in\Lambda$, $\left(1-D_{i_{0}}\right)\alpha_{j}-\left(1-D_{j}\right)\alpha_{i_{0}}\in\mathbb{Q}$
for every $j\in\Lambda$. Then there exists a finitely supported $\mu$-stationary
measure on $\mathbb{T}$. 
\end{prop}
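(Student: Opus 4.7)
The plan is to construct a finite subset $F \subset \mathbb{T}$ that is forward-invariant under each $h_j$, and then obtain a $\mu$-stationary probability measure supported on $F$ by invoking finite Markov chain theory.

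To build $F$, I would start from the unique fixed point of $h_{i_0}$ on $\mathbb{R}$, namely $x_{i_0}\df\alpha_{i_0}/(1-D_{i_0})$, which is well-defined because $|D_{i_0}|\geq 2$. A direct computation yields the key identity
\[
(1-D_{i_0})\bigl(h_j(x_{i_0})-x_{i_0}\bigr)\;=\;(1-D_{i_0})\alpha_j-(1-D_j)\alpha_{i_0},
\]
whose right-hand side is rational by hypothesis. Hence $h_j(x_{i_0})-x_{i_0}\in\mathbb{Q}/\mathbb{Z}$ for every $j\in\Lambda$. Choosing a single positive integer $N$ that is a common denominator for the finitely many rationals $h_j(x_{i_0})-x_{i_0}$, so that $h_j(x_{i_0})\equiv x_{i_0}+m_j/N\pmod{1}$ with $m_j\in\mathbb{Z}$, I set
\[
F\;\df\;\bigl\{x_{i_0}+\tfrac{k}{N}\bmod 1 : 0\leq k<N\bigr\}\subset\mathbb{T},
\]
which has cardinality at most $N$. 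The computation
\[
h_j\!\left(x_{i_0}+\tfrac{k}{N}\right)\;=\;h_j(x_{i_0})+\tfrac{D_j k}{N}\;\equiv\;x_{i_0}+\tfrac{m_j+D_j k}{N}\pmod{1}
\]
then shows $h_j(F)\subseteq F$ for every $j$, establishing forward-invariance.

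To conclude, the restriction of the random walk to the finite set $F$ is governed by the stochastic matrix $P(x,y)\df\sum_{j:h_j(x)=y}\mu(\{h_j\})$. Any stationary distribution $\nu$ for this finite-state Markov chain (whose existence is immediate from Perron--Frobenius, or equivalently from extracting a weak-$*$ accumulation point of $\tfrac{1}{M}\sum_{k=0}^{M-1}P^k\delta_x$) satisfies $\mu*\nu=\nu$ when viewed as a measure on $\mathbb{T}$, and is finitely supported on $F$. I do not anticipate any real obstacle: the algebraic identity in the second paragraph, which converts the stated rationality hypothesis into rationality of $h_j(x_{i_0})-x_{i_0}$, is the only substantive step, after which the construction of $F$ and the appeal to finite Markov chain theory are routine.
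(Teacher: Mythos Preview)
Your proposal is correct and follows essentially the same approach as the paper's proof: your fixed point $x_{i_0}=\alpha_{i_0}/(1-D_{i_0})$ coincides with the paper's $x_0=-\alpha_1/(D_1-1)$, your quantities $h_j(x_{i_0})-x_{i_0}$ are exactly the paper's $\beta_j=\alpha_j-\tfrac{D_j-1}{D_1-1}\alpha_1$, and your invariant set $F$ is the paper's $A+x_0$. The only cosmetic differences are that you make the algebraic identity linking the hypothesis to rationality of $h_j(x_{i_0})-x_{i_0}$ explicit, and you spell out the Perron--Frobenius step where the paper simply asserts that the finite invariant set supports a stationary measure.
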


\begin{proof}
Without loss of generality assume that $i_{0}=1$. Denote 
\[
\beta_{j}=\alpha_{j}-\dfrac{D_{j}-1}{D_{1}-1}\alpha_{1}.
\]
By assumption $\beta_{j}\in\mathbb{Q}$ for every $j\in\Lambda$.
Let $q\in\mathbb{N}$ be a common denominator for all the $\beta_{i}$,
and denote $A=\left\{ 0,\frac{1}{q},...,\frac{q-1}{q}\right\} $.
Denote also $x_{0}=-\dfrac{\alpha_{1}}{D_{1}-1}$ ,
so that $h_{1}\left(x_{0}\right)=x_{0}$.

We now claim that for all $ i\in\Lambda$, $h_{i}\left(A+x_{0}\right)\subseteq A+x_{0}$.
Indeed, for all $a\in A$ and all $ i\in\Lambda$, 
\[
\begin{array}{ll}
h_{i}\left(a+x_{0}\right) & =D_{i}(a)+\alpha_{i}+D_{i}(x_{0})\\
 & =D_{i}(a)+\alpha_{i}+\left(D_{i}-1\right)(x_{0})+x_{0}\\
 & =D_{i}(a)+\alpha_{i}-\dfrac{D_{i}-1}{D_{1}-1}\alpha_{1}+x_{0}\\
 & =D_{i}(a)+\beta_{i}+x_{0}\in A+x_{0}
\end{array}
\]
Hence $A+x_{0}$ supports a $\mu$-stationary measure. 
\end{proof}

\begin{rem}
It may be verified that the hypothesis of Proposition \ref{prop: when the condition does not hold} 
is equivalent to the seemingly stronger assumption that for every $i, j\in\Lambda$, 
$\left(1-D_{i}\right)\alpha_{j}-\left(1-D_{j}\right)\alpha_{i}\in\mathbb{Q}$, 
which is the exact converse of condition \equ{eq: irrationality cond2} in the special case treated here.
This equivalence is also implied by combining Proposition \ref{prop: when the condition does not hold} 
and Theorem \ref{thm:unique stationary measures T^n}.

\end{rem}

Our method of proof also gives another proof of the following
well-known fact: 
\begin{prop}\label{prop: CD}
  Suppose $D_{i}=1$ for every $i\in\Lambda$, that is, each $h_i$ is a
  translation by $\alpha_i$. Then Haar measure
is the unique $\mu$-stationary probability measure on $\mathbb{T}$
if and only if $\alpha_i$ is irrational for some  $i\in\Lambda$.
\end{prop}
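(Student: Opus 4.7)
The plan is to prove each direction separately. The harder direction, that irrationality of some $\alpha_{i_{0}}$ implies uniqueness of Haar as a $\mu$-stationary measure, can be proved by specializing the method used for Theorem \ref{thm:unique stationary measures T^n}. Let $\nu$ be a $\mu$-stationary probability measure on $\mathbb{T}$. By parts (1) and (2) of Proposition \ref{thm:Furstenberg}, the limit measures $\nu_{b}=\lim_{m\to\infty}(b_{1}\circ\cdots\circ b_{m})_{*}\nu$ exist for $\beta$-a.e.\ $b\in B$, and $\nu=\int_{B}\nu_{b}\,d\beta(b)$. Applying part (3) with $m=1$, for each $i\in\Lambda$ and $\beta$-a.e.\ $b$, one also has $\nu_{b}=\lim_{m\to\infty}(b_{1}\circ\cdots\circ b_{m}\circ h_{i})_{*}\nu$. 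In the translation case, $h_{i}$ commutes with each $b_{j}$, so
\[
(b_{1}\circ\cdots\circ b_{m}\circ h_{i})_{*}\nu=(h_{i})_{*}(b_{1}\circ\cdots\circ b_{m})_{*}\nu.
\]
Using the continuity of the pushforward operation $\theta\mapsto(h_{i})_{*}\theta$ on $\mathrm{Prob}(\mathbb{T})$ (with respect to the weak-$*$ topology), one may pass the pushforward outside the limit to obtain $\nu_{b}=(h_{i})_{*}\nu_{b}$ for every $i\in\Lambda$ and $\beta$-a.e.\ $b$. Thus the stabilizer of $\nu_{b}$ in $\mathbb{T}$ is a closed subgroup containing every $\alpha_{i}$; since $\alpha_{i_{0}}$ is irrational, this subgroup must equal all of $\mathbb{T}$, so $\nu_{b}$ is Haar measure. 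Integrating gives $\nu=\int\nu_{b}\,d\beta=\mathrm{Haar}$.

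For the converse, suppose all the $\alpha_{i}$ are rational and let $q$ be a common denominator. Each $h_{i}$ then permutes the finite set $A_{q}\df\{0,1/q,\ldots,(q-1)/q\}\subset\mathbb{T}$, so the uniform probability measure $\nu_{q}$ on $A_{q}$ satisfies $(h_{i})_{*}\nu_{q}=\nu_{q}$ for every $i$, hence $\mu*\nu_{q}=\nu_{q}$. Since $\nu_{q}$ is supported on a finite set, it is not Haar measure, so Haar measure is not the unique $\mu$-stationary probability measure.

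There is no serious obstacle here: the key identity $\nu_{b}=(h_{i})_{*}\nu_{b}$ falls out immediately once one observes that the $h_{i}$ commute with one another and that the translation action on $\mathrm{Prob}(\mathbb{T})$ is continuous, so the whole delicate accumulation-point argument of Lemma \ref{lem:multidim accumulation points} collapses into the trivial observation that the single values $\alpha_{1},\ldots,\alpha_{n}$ lie in the stabilizer of $\nu_{b}$. The converse is an elementary construction, and the equivalence follows.
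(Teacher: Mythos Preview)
Your proof is correct and follows essentially the same approach as the paper: use Proposition~\ref{thm:Furstenberg}(3) together with the commutativity of the rotations to deduce $\nu_b = (h_i)_*\nu_b$, and conclude that $\nu_b$ is Haar from invariance under an irrational rotation. The paper's write-up is slightly terser (it invokes only the single irrational $h_1$ and dismisses the converse as ``trivial''), while you make the converse explicit via the uniform measure on $\{0,1/q,\ldots,(q-1)/q\}$; these are cosmetic differences, not substantive ones.
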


\begin{proof}
Assume first (without loss of generality) that $\alpha_{1}\notin\mathbb{Q}$.
Since the functions $h_{i}$ are now only rotations, they
commute with each other. Hence, if $\nu$ is some $\mu$-stationary
measure, by Proposition \ref{thm:Furstenberg}, for $\beta$-a.e. $b\in B$,
\[
\begin{array}{l}
\nu_{b}=\lim\limits _{k\to\infty}\left(b_{1}\circ\cdots\circ b_{k}\circ h_{1}\right)_{*}\nu=\lim\limits _{k\to\infty}\left(h_{1}\right)_{*}\left(b_{1}\circ\cdots\circ b_{k}\right)_{*}\nu=\left(h_{1}\right)_{*}\nu_{b}.\end{array}
\]
Since $h_{1}$ is an irrational rotation, $\nu_{b}$ has to be Haar
measure and hence $\nu$ is Haar measure.

The other implication is trivial. 
\end{proof}

We remark that Proposition \ref{prop: CD} could also be obtained as a
corollary of a result of Choquet and Deny (see \cite[\S1.5]{Benoist-QuintBook} for the
statement and a similar
argument). 
\section{Normal numbers in fractals \label{sec:Application}}

\begin{comment}
In general, note that 
\[
f_{i_{1}}\circ\cdots\circ f_{i_{n}}\left(x\right)=
\dfrac{1}{D^{n}}x+\sum_{j=1}^{n} \dfrac{1}{D^{j-1}}t_{i_{j}}
\]
\end{comment}

Using an idea of \cite{SimmonsWeiss2019}, we will show how to derive
information about orbits in self-affine sets from random walks on tori.
In particular, we will obtain information on normal numbers in self-similar sets. 
We will then use the results of \S\ref{sec:Random-walks-on-tori}
to prove Theorem \ref{thm:normal number is fractals}. In order to make
the idea more transparent, we first prove a special case of Theorem
\ref{thm:normal number is fractals}, namely we assume $r_1 = \cdots =
r_k=1$, or in other words,
\eq{eq: extra assumption}{
  \text{ there is an expanding integer matrix } D 
 \text{ such that } f_i (x ) =D^{-1} x + t_i, \ \ i= 1, \ldots, k.}
In this case the irrationality assumption \equ{eq: irrationality cond1} simplifies to 
\eq{eq: irrationality cond3}{
\{t_i - t_j : 1 \leq i, j \leq k \} \text{ is not contained in any
  proper  closed subgroup of } \mathbb{T}^d.
 }

We will need the following result\footnote{We use here and further
  below results of \cite[\S 5]{SimmonsWeiss2019}, where 
  $\Gamma$ is taken to be a group rather than a semigroup. However the
  more general case of semigroups follows from the same proof.}:
\begin{prop}[\cite{SimmonsWeiss2019}, Prop. 5.1]
  \label{prop:Simmons-Weiss-equi-of-pairs}
Let $\Gamma$ be a semigroup acting on a space $X$, let $\mu$ be a
probability measure on $\Gamma$, and denote the infinite product
measure $\mu^{\otimes \mathbb{N}}$ by $\beta$. 
  Given any $x_{0}\in X$,
assume that for $\beta$-a.e. $b\in B$, the random path
$\left(b_{n}\cdots b_{1}x_{0}\right)_{n\in\mathbb{N}}$ 
is equidistributed w.r.t. a measure $\nu$ on $X$. Then
for $\beta$-a.e. $b\in B$, the sequence 
\[
\left(b_{n}\cdots b_{1}x_{0}, \, T^{n}b\right)_{n\in\mathbb{N}}
\]
is equidistributed w.r.t. the product measure $\nu\otimes\beta$ on
$X\times B$.  
\end{prop}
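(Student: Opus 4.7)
The plan is to reduce the pair equidistribution to Cesàro convergence against product test functions via Stone--Weierstrass, and then to exploit the independence built into the Bernoulli structure. Since $X \times B$ is compact, the linear span of functions of the form $F(x,b) = \varphi(x)\psi(b)$, with $\varphi \in C(X)$ and $\psi \in C(B)$ depending on only finitely many coordinates $b_1,\ldots,b_k$, is uniformly dense in $C(X \times B)$. It therefore suffices, for each such pair $(\varphi,\psi)$ and for $\beta$-a.e.\ $b$, to establish
\[
\tfrac{1}{N}\sum_{n=0}^{N-1} \varphi\bigl(b_n\cdots b_1 x_0\bigr)\psi(T^n b) \xrightarrow[N\to\infty]{} \int_X \varphi\,d\nu \cdot \int_B \psi\, d\beta,
\]
and then to intersect the resulting conull sets over a countable dense family of pairs.

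Write $x_n(b) \df b_n\cdots b_1 x_0$ and $\mathcal{F}_n \df \sigma(b_1,\ldots,b_n)$. The decisive point is that $x_n$ is $\mathcal{F}_n$-measurable, while $\psi\circ T^n$ depends only on $(b_{n+1},\ldots,b_{n+k})$, which is independent of $\mathcal{F}_n$ under $\beta$. Setting $c_\psi \df \int \psi\,d\beta$, I split
\[
\varphi(x_n)\psi(T^n b) = c_\psi\,\varphi(x_n) + Z_n(b), \qquad Z_n(b) \df \varphi(x_n)\bigl(\psi(T^n b) - c_\psi\bigr).
\]
The Cesàro average of $c_\psi\,\varphi(x_n)$ converges to $c_\psi\int \varphi\,d\nu$ for $\beta$-a.e.\ $b$ directly from the hypothesis applied to $\varphi$. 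By independence, $\mathbb{E}[Z_n \mid \mathcal{F}_n]=0$; and for $m > n+k$ the variable $Z_n$ is $\mathcal{F}_m$-measurable while $\mathbb{E}[Z_m \mid \mathcal{F}_m]=0$, so $\mathbb{E}[Z_n Z_m]=0$. Combined with the uniform bound $\|Z_n\|_\infty \le 2\|\varphi\|_\infty\|\psi\|_\infty$, this gives
\[
\Bigl\|\tfrac{1}{N}\sum_{n=0}^{N-1} Z_n\Bigr\|_{L^2(\beta)}^2 = O(k/N).
\]

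The main obstacle I foresee is upgrading this $L^2$ bound to $\beta$-almost-sure convergence of $\tfrac{1}{N}\sum Z_n$. For this, I apply Chebyshev's inequality along the subsequence $N_j = j^2$: the corresponding tail probabilities are summable, so Borel--Cantelli gives a.s.\ convergence along $(N_j)$, and the uniform boundedness of $Z_n$ controls the fluctuations for $N\in[N_j, N_{j+1}]$ at negligible cost, yielding a.s.\ convergence for all $N$. The whole argument hinges on the Bernoulli-independence decoupling of the ``past'' (determining $x_n$) from the ``near future'' (determining $\psi\circ T^n$), which is precisely what makes both $\mathbb{E}[Z_n\mid\mathcal{F}_n]=0$ and the off-diagonal orthogonality of the $Z_n$'s hold.
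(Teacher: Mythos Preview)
Your argument is correct. The paper does not itself prove this proposition; it is quoted from \cite{SimmonsWeiss2019}. However, the paper does prove a closely related statement (Proposition~\ref{prop:subsequence of a Markov chain}) and explicitly says that proof ``uses some of the ideas in the proof of Proposition~\ref{prop:Simmons-Weiss-equi-of-pairs}, given in \cite{SimmonsWeiss2019}.'' That proof proceeds differently from yours: rather than bounding the variance of the centred sums and invoking Borel--Cantelli along $N_j=j^2$, it packages the centred terms into a martingale $M_m=\sum_k \mathbb{E}[\mathbf{1}_{[\omega]}(T^{n_k}x)-\mu([\omega])\mid \mathcal{B}_m]$ and applies Doob's convergence theorem directly to obtain almost-sure convergence without a subsequence step. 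Your $L^2$-orthogonality route is more elementary (it avoids martingale theory) and makes the quantitative decay $O(k/N)$ explicit; the martingale route is cleaner in that it yields a.s.\ convergence in one stroke and adapts readily to the Markov-chain setting where exact independence is replaced by exponential mixing. Both exploit the same structural fact you identified: the ``past'' $b_1,\ldots,b_n$ determining $x_n$ is decoupled from the ``near future'' $b_{n+1},\ldots,b_{n+k}$ determining $\psi(T^n b)$.

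One small caveat: you assert ``$X\times B$ is compact'' to invoke Stone--Weierstrass, but the proposition as stated does not assume $X$ compact. In the paper's applications $X=\mathbb{T}^d$ and $\mathrm{supp}\,\mu$ is finite, so this is harmless, but strictly speaking your reduction to product test functions needs that hypothesis.
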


\begin{proof}[Proof of Theorem \ref{thm:normal number is fractals}
  assuming \equ{eq: extra assumption}]
Let $\mu$ be a Bernoulli measure on $K$ given by
$\mu=\pi_{*}\, \sigma$, where $\sigma$ is a Bernoulli measure
on the symbolic space $\Lambda^{\mathbb{N}}$ and $\pi:
\Lambda^{\mathbb{N}} \to K$ is the coding map. 

By a routine induction, we find that for any $n,m\in\mathbb{N}$
with $n<m$, 
\[\begin{split}
& f_{i_{1}}\circ f_{i_{2}}\circ\cdots\circ
f_{i_{m}}(0)=  D^{-1}f_{i_{2}}\circ\cdots\circ
  f_{i_{m}}(0)+t_{i_{1}}  = \cdots \\ \cdots = &   D^{-n}f_{i_{n+1}}\circ\cdots\circ
  f_{i_{m}}(0) + \sum_{j=1}^{n}D^{-(j-1)}(t_{i_{j}}).
\end{split}
\]
Multiplying by $D^n$ and taking the limit as $m \to \infty$ we obtain
that for any $x\in K$ and 
$n\in\mathbb{N}$, 
\begin{equation}
D^{n}(x)=\sum_{j=1}^{n}D^{n-(j-1)}(t_{i_{j}})+\pi(T^{n}(\underline{i})),\label{eq: D^nx}
\end{equation}
where $x=\pi(\underline{i})$ for some
$\underline{i}=(i_{1},i_{2},...)\in\Lambda^{\mathbb{N}}$.  

Define, for each $s\in\Lambda$, $h_{s}:\mathbb{T}^{d}\to\mathbb{T}^{d}$
by $h_{s}(x)\df D(x+t_{s})$.
Note that
\begin{equation}
h_{i_{n}}\circ\cdots\circ h_{i_{1}}(0)=\sum_{j=1}^{n}D^{n-(j-1)}(t_{i_{j}}),\label{eq:h_i(0)}
\end{equation}
and that $\left(h_{i_{n}}\circ\cdots\circ h_{i_{1}}(0)\right)_{n\in\mathbb{N}}$
is in fact a random walk trajectory governed by the probability measure
$\sigma$ on $\Lambda^{\mathbb{N}}$. Condition \equ{eq:
  irrationality cond1} in this case implies
condition \equ{eq: irrationality cond2} of Theorem \ref{thm:unique
  stationary measures T^n}. Denote by $\lambda$ the Haar measure on $\mathbb{T}^d$. Applying Corollary 
\ref{cor:a.s. equidistribution of trajecotries}, 
we get that for $\sigma$-a.e. $\underline{i}\in\Lambda^{\mathbb{N}}$,
the sequence \eqref{eq:h_i(0)} %$\sum_{j=1}^{n}D^{n-(j-1)}t_{i_{j}}$
equidistributes
in $\mathbb{T}^{d}$ w.r.t. to $\lambda$. Next, we apply Proposition
\ref{prop:Simmons-Weiss-equi-of-pairs}, with $X = \mathbb{T}^d$ and
$\nu = \lambda$, and obtain the equidistribution
of the sequence $\left(h_{i_{n}}\circ\cdots\circ
  h_{i_{1}}(0),\,T^{n}(\underline{i}) \right)_{n=1}^{\infty}$
w.r.t. the product measure $\lambda \otimes\sigma$ on $\mathbb{T}^d
\times B$, for $\sigma$-a.e.
$\underline{i}\in\Lambda^{\mathbb{N}}$. Since the coding map $\pi$
is continuous, this implies that for
$\sigma$-a.e. $\underline{i}\in\Lambda^{\mathbb{N}}$, 
the joint sequence 
\begin{equation}
\left(h_{i_{n}}\circ\cdots\circ
  h_{i_{1}}(0),\,\pi(T^{n}(\underline{i}))
\right)_{n=1}^{\infty} \label{eq: Pair to equidistribute} 
\end{equation}
is equidistributed in $\mathbb{T}^{d}\times K$ with respect to the
product measure $\lambda\otimes\mu$.

Consider the addition map
$$F:\mathbb{T}^{d}\times K\longrightarrow\mathbb{T}^{d}, \ \ 
F(x,y)=x+y.$$ It follows easily from the fact that Haar measure is
invariant under addition in $\mathbb{T}^d$, that
\begin{equation}\label{eq: from this}
  F_{*}(\lambda\otimes \mu)=\lambda.
  \end{equation}
%To see this, consider any measurable
%$f:\mathbb{T}\longrightarrow[0,\infty]$.
%Then one has 
%\[
%{\displaystyle {\displaystyle \int_{\mathbb{T}^{d}}f\,d(F_{*}(\lambda\times\mu))=\int_{K}\int_{\mathbb{T}^{d}}f(x+y)\,d\lambda(x)\,d\mu(y).}}
%\]
%The invariance of $\lambda$ under addition in $\mathbb{T}^{d}$ implies
%at once that the above integral is 
%\[
%{\displaystyle \int_{K}f\,d\lambda\,\int_{\mathbb{T}^{d}}d\mu=\int_{\mathbb{T}^{d}}f\%,d\lambda\,.}
%\]
The equidistribution of $\left(D^{n}x\right)_{n=1}^{\infty}$
for $\mu$-almost every $x\in K$ follows immediately from \eqref{eq:
  from this},
from (\ref{eq: D^nx}), (\ref{eq:h_i(0)}) and from the equidistribution
of (\ref{eq: Pair to equidistribute}).
\end{proof}

For the general case of Theorem \ref{thm:normal number is fractals},
we will need an extension of Proposition
\ref{prop:Simmons-Weiss-equi-of-pairs}. We let $C_r \df \mathbb{Z} /
r\mathbb{Z}$ denote the cyclic group of order $r$, and let $\theta$
denote the uniform measure on 
  $C_r$. Recall that we have a random walk on $\mathbb{T}^d$ driven by
  a finitely supported measure $\mu$ on a semigroup $\Gamma$ of affine toral
  endomorphisms. We write $\bar B \df (\mathrm{supp} \mu)^{\otimes
    \mathbb{Z}}$ and $\bar \beta \df \mu^{\otimes \mathbb{Z}}$. Note this is the
  two-sided shift space. We continue to denote by $T$ the shift map
  (but this time on $\bar B$).  We say that a map
  $\kappa: \Gamma \to C_r$ is a {\em 
    morphism} if it satisfies
  $$
\forall \gamma_1, \gamma_2 \in
  \Gamma, \ \kappa(\gamma_1 \gamma_2) = \kappa (\gamma_1) +
  \kappa(\gamma_2). 
  $$
 
  \begin{prop}\label{prop: independence 2} With the notation above,
let $r \in \mathbb{N}$ and let $\kappa
  : \Gamma \to C_r$ be a surjective morphism. 
Given any $x_{0}\in\mathbb{T}^{d}$,
assume that for $\bar \beta$-a.e. $b\in \bar B$, the random path
$\left(b_{n}\cdots b_{1}x_{0}\right)_{n\in\mathbb{N}}$ 
is equidistributed w.r.t. a measure $\nu$ on $\mathbb{T}^{d}$. Then
for $\bar \beta$-a.e. $b\in \bar B$, the sequence 
\[
\left(b_{n}\cdots b_{1}x_{0}, \, \kappa(b_n \cdots b_1), \, T^{n}b \right)_{n\in\mathbb{N}}
\]
is equidistributed w.r.t. the product measure $\nu\otimes
\theta\otimes  \bar \beta$ on
$\mathbb{T}^{d} \times C_r \times \bar B$.  
\end{prop}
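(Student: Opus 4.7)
The plan is to extend the strategy of Proposition~\ref{prop:Simmons-Weiss-equi-of-pairs} by overlaying a Fourier decomposition on the finite abelian group $C_r$. By Stone--Weierstrass, equidistribution of the triple $(b_n\cdots b_1 x_0,\kappa(b_n\cdots b_1),T^n b)$ with respect to $\nu\otimes\theta\otimes\bar\beta$ is equivalent to showing that for every $\varphi\in C(\mathbb T^d)$, every character $\chi$ of $C_r$, and every cylinder function $\psi$ on $\bar B$,
\[
\frac{1}{N}\sum_{n=1}^{N}\varphi(x_n)\,\chi(S_n)\,\psi(T^n b)\longrightarrow\int\varphi\,d\nu\cdot\int\chi\,d\theta\cdot\int\psi\,d\bar\beta\quad\text{for $\bar\beta$-a.e.\ }b,
\]
where I write $x_n\df b_n\cdots b_1 x_0$ and $S_n\df\kappa(b_n\cdots b_1)$. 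For the trivial character this is exactly Proposition~\ref{prop:Simmons-Weiss-equi-of-pairs}, so the content lies in the case of a non-trivial character, where the right-hand side is zero and the goal is to show the average tends to zero almost surely.

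For non-trivial $\chi$ I would first eliminate the $\psi$ factor by an independence argument mirroring the proof of Proposition~\ref{prop:Simmons-Weiss-equi-of-pairs}: since $\psi$ depends on only finitely many coordinates $b_{n+1},\ldots,b_{n+L}$, writing $\psi=\int\psi\,d\bar\beta+\tilde\psi$ with $\mathbb E[\tilde\psi(T^n b)\mid\mathcal F_n]=0$ and phase-decomposing the resulting sum modulo $L$ produces a genuine martingale-differences sequence whose Cesaro averages vanish a.s.\ by Kolmogorov's strong law. This reduces everything to
\[
\frac{1}{N}\sum_{n=1}^{N}\varphi(x_n)\,\chi(S_n)\longrightarrow 0\quad\text{for $\bar\beta$-a.e.\ }b,
\]
which I would attack by a second-moment computation. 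Since $\kappa$ is a morphism, $\chi(S_n)=\prod_{j=1}^{n}\chi(\kappa(b_j))$ is a product of i.i.d.\ unit complex numbers with mean $c\df\int\chi\circ\kappa\,d\mu$, and surjectivity of $\kappa$ together with the fact that $\Gamma$ is the sub-semigroup generated by $\mathrm{supp}\,\mu$ forces $\kappa(\mathrm{supp}\,\mu)$ to generate $C_r$, whence $|c|<1$ unless $\chi\circ\kappa$ is constant on $\mathrm{supp}\,\mu$. Conditioning on $\mathcal F_n$ for $n\le m$ gives
\[
\mathbb E\!\left[\varphi(x_n)\overline{\varphi(x_m)}\,\chi(S_n)\overline{\chi(S_m)}\right]=\mathbb E\!\left[\varphi(x_n)\,\overline{(P_\chi^{m-n}\varphi)(x_n)}\right],
\]
where $(P_\chi\varphi)(x)\df\int\chi(\kappa(g))\,\varphi(gx)\,d\mu(g)$ is the twisted Markov operator, and the i.i.d.\ cancellation $|\mathbb E[\chi(\kappa(b_{n+1})+\cdots+\kappa(b_m))]|=|c|^{m-n}$ feeds into the conditional-expectation identity to produce summable cross-terms, yielding $\mathbb E\bigl[\bigl|\sum_{n=1}^{N}\varphi(x_n)\chi(S_n)\bigr|^2\bigr]=O(N)$. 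Chebyshev combined with Borel--Cantelli along the subsequence $N_k=k^2$, plus routine interpolation, then delivers the desired almost-sure convergence. The leftover case $|c|=1$ is easier: $\chi(S_n)=z^n$ is deterministic with $z$ a non-trivial root of unity, and summation by parts against the uniformly bounded sequence $\varphi(x_n)$ yields an $O(1/N)$ bound.

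The main obstacle is to control the cross-correlation $\mathbb E[\varphi(x_n)\,\overline{(P_\chi^{m-n}\varphi)(x_n)}]$ uniformly in $m-n$: the abstract operator $P_\chi$ on $C(\mathbb T^d)$ need not have a spectral gap, so one cannot simply iterate trivial norm bounds. Extracting the necessary geometric decay relies crucially on the i.i.d.\ structure of the $\kappa(b_j)$'s and on the finiteness of $C_r$, which together ensure that each application of $\chi\circ\kappa$ contributes a sharp phase of average modulus $|c|<1$; the case analysis on $|c|$ is what ultimately closes the estimate.
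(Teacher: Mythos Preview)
Your Fourier-analytic framework is natural, and the reductions for the trivial character and for the boundary case $|c|=1$ are sound. The gap is in the heart of the argument, the case $|c|<1$. You correctly identify the off-diagonal term as $\mathbb{E}\bigl[\varphi(x_n)\,\overline{(P_\chi^{\,m-n}\varphi)(x_n)}\bigr]$, but the asserted geometric decay does not follow from the identity $\bigl|\mathbb{E}[\chi(S_m-S_n)]\bigr|=|c|^{\,m-n}$. The difficulty is that $\varphi(x_m)=\varphi(b_m\cdots b_{n+1}x_n)$ and $\chi(S_m-S_n)=\prod_{j=n+1}^m\chi(\kappa(b_j))$ share the \emph{same} randomness $b_{n+1},\ldots,b_m$; the expectation does not factor, and $P_\chi$ need have no decay whatsoever. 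Concretely, take $h_1(x)=3x$, $h_2(x)=3x+\tfrac12$ on $\mathbb{T}$ with equal weights, $r=2$, $\kappa(h_i)=i-1$, and $\varphi(x)=e^{2\pi ix}$. Then
\[
P_\chi\varphi(x)=\tfrac12\, e^{6\pi i x}-\tfrac12\, e^{2\pi i(3x+1/2)}=e^{6\pi i x},
\]
and inductively $P_\chi^{\,k}\varphi(x)=e^{2\pi i\,3^{k}x}$, so $\|P_\chi^{\,k}\varphi\|_\infty=\|P_\chi^{\,k}\varphi\|_{L^2(\mathrm{Haar})}=1$ for all $k$. There is no spectral damping to harvest, in $C(\mathbb{T})$ or in $L^2$. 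Your closing paragraph concedes the absence of a spectral gap but then invokes ``the i.i.d.\ structure'' to supply the decay; as this example shows, i.i.d.\ phases alone cannot do this once they are entangled with the orbit. Since your second-moment bound uses neither the equidistribution hypothesis on $(x_n)$ nor the expanding nature of the maps, it is effectively claiming an estimate in far greater generality than can hold.

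The paper closes the gap by a different mechanism. It regards $(x_n,S_n)$ as the trajectory of a single random walk on the enlarged space $\mathbb{T}^d\times C_r$ under $\gamma\cdot(x,s)=(\gamma x,\,s+\kappa(\gamma))$, observes that each generator acts on the $\mathbb{T}^d$ factor as an expanding affine endomorphism and is therefore mixing, and invokes \cite[Cor.~5.5]{SimmonsWeiss2019} to deduce that $(x_n,S_n)$ equidistributes with respect to $\nu\otimes\theta$. The full triple statement with $T^n b$ then follows by applying the bi-infinite version of Proposition~\ref{prop:Simmons-Weiss-equi-of-pairs} (namely \cite[Prop.~5.2]{SimmonsWeiss2019}) with state space $X=\mathbb{T}^d\times C_r$. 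In short, the decay you need comes from the expansion on $\mathbb{T}^d$, not from cancellation of phases on $C_r$; once that dynamical input is used, the character decomposition becomes unnecessary.
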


\begin{proof}
This is not stated explicitly in \cite{SimmonsWeiss2019} but is proved
along the same lines as \cite[Thm. 2.2]{SimmonsWeiss2019}. For completeness we
sketch the proof.

An extension of Proposition \ref{prop:Simmons-Weiss-equi-of-pairs} to
the bi-infinite sequence space $\bar B$ is given in
\cite[Prop. 5.2]{SimmonsWeiss2019}. Putting this result to use, it is enough to
show that for any $x_0 \in \mathbb{T}^d$, for  $\bar \beta$-a.e. $b$,
the sequence $\left(b_n \cdots b_1 x_0 , \kappa(b_n \cdots
  b_1)\right)$ is equidistributed, with
respect to $  \nu \otimes\theta$ on $\mathbb{T}^d \times C_r$. Indeed,
once this is established we can apply
\cite[Prop. 5.2]{SimmonsWeiss2019}  with 
$X = \mathbb{T}^d \times C_r$.

Now note that the action of $\Gamma$ on
$\mathbb{T}^d \times C_r$ by $\gamma(x, s) \mapsto (\gamma x, s+
\kappa(\gamma))$ is ergodic. This follows from the fact that each
individual element of $\gamma$ is an expansive toral endomorphism, and
hence is mixing. Thus the required equidistribution statement follows
from \cite[Cor. 5.5]{SimmonsWeiss2019}.
\end{proof}

We will also need the following lemma:
\begin{lem}\label{lem: equidistribution of powers}
Let $(X, T, \mu, \mathscr{B})$ be a probability preserving dynamical system, where $X$ is compact, $\mathscr{B}$ is the Borel $\sigma$-algebra, and $T$ is continuous.
Assume that for some point $x\in X$, and some integer $p>0$,  the sequence $(T^{pn}x)_{n=0}^{\infty}$ is equidistributed w.r.t. $\mu$, then so is the sequence  $(T^{n}x)_{n=0}^{\infty}$.
\end{lem}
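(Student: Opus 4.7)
The plan is to split the orbit $(T^n x)_{n \geq 0}$ into its $p$ arithmetic progressions modulo $p$, show that each subsequence is equidistributed with respect to $\mu$, and then recombine. The invariance of $\mu$ under $T$, continuity of $T$, and compactness of $X$ are the three ingredients that make everything work.

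More precisely, for each residue $r \in \{0, 1, \ldots, p-1\}$, I observe that $T^{pn+r}x = T^r(T^{pn}x)$. If the empirical measures
\[
\mu_N \df \frac{1}{N}\sum_{n=0}^{N-1} \delta_{T^{pn}x}
\]
converge weak-$*$ to $\mu$ by hypothesis, then since $T^r$ is continuous and $X$ is compact, the push-forward map $(T^r)_{*} : \mathrm{Prob}(X) \to \mathrm{Prob}(X)$ is continuous in the weak-$*$ topology, so
\[
(T^r)_{*}\mu_N = \frac{1}{N}\sum_{n=0}^{N-1} \delta_{T^{pn+r}x} \xrightarrow[N\to\infty]{} (T^r)_{*}\mu = \mu,
\]
where the last equality uses that $\mu$ is $T$-invariant. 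Hence each arithmetic subsequence $(T^{pn+r}x)_{n\geq 0}$ is equidistributed w.r.t. $\mu$.

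To recombine, I write any $N \in \mathbb{N}$ as $N = pM + s$ with $0 \leq s < p$, and split
\[
\frac{1}{N}\sum_{n=0}^{N-1}\delta_{T^n x} = \frac{1}{N}\sum_{r=0}^{p-1}\sum_{k=0}^{M_r - 1}\delta_{T^{pk+r}x},
\]
where $M_r \in \{M, M+1\}$ depends on $r$ and $s$. The coefficient $M_r/N$ tends to $1/p$ as $N \to \infty$, so this expression is asymptotically a convex combination with equal weights of the empirical measures of the $p$ arithmetic subsequences. By the previous paragraph each tends weak-$*$ to $\mu$, so the full average tends to $\mu$ as well. The boundary term arising from the incomplete block of size $s < p$ contributes total mass $O(1/N)$ and is negligible. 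This proves equidistribution of $(T^n x)_{n\geq 0}$ with respect to $\mu$.

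There is no real obstacle here; the only thing worth checking is the continuity of the push-forward $(T^r)_{*}$ on $\mathrm{Prob}(X)$ (which follows at once from continuity of $T^r$ and the definition of the weak-$*$ topology on a compact space) and the identity $(T^r)_{*}\mu = \mu$ (which is just the hypothesis that $T$ preserves $\mu$).
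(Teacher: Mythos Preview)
Your proof is correct and follows essentially the same approach as the paper: both show that each arithmetic subsequence $(T^{pn+r}x)_{n\geq 0}$ is equidistributed with respect to $\mu$ (the paper tests against $f\circ T^r\in C(X)$, you push forward by $(T^r)_*$, which is the same thing), and then average over the $p$ residue classes. The recombination step and the handling of the $O(1/N)$ boundary term are likewise the same in substance.
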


\begin{proof}
Given a continuous function $f\in C(X)$, we have
$$\frac{1}{N} \sum_{n=0}^{N-1}{f(T^{pn}x)} \underset{N\to\infty}{\longrightarrow}\int{fd\mu} .$$
For every integer $0\leq m<p$, $f\circ T^m$ is also continuous, and therefore
$$\frac{1}{N} \sum_{n=0}^{N-1}{f(T^{pn+m}x)} \underset{N\to\infty}{\longrightarrow}\int{f\circ T^m d\mu}=\int{fd\mu} .$$
Summing over all possible values of $m$, we get
$$\frac{1}{N} \sum_{n=0}^{N-1}{f(T^{pn}x)+f(T^{pn+1}x)+\cdots + f(T^{pn+(p-1)}x)} \underset{N\to\infty}{\longrightarrow} p\int{fd\mu} .$$
Since the left hand side is just $\frac{1}{N} \sum\limits_{n=0}^{pN-1}{f(T^{n}x)}$, dividing both sides by $p$, we get
$$\frac{1}{pN} \sum_{n=0}^{pN-1}{f(T^{n}x)} \underset{N\to\infty}{\longrightarrow}\int{fd\mu} .$$
Since $f$ is bounded, the claim follows.
\end{proof}

\begin{proof}[Proof of Theorem \ref{thm:normal number is fractals},
  general case]
  We have a set $K$ which is the attractor of the maps
  $f_i (x) = D^{-r_i} \cdot x + t_i \ (i=1, \ldots, k),$ for
  an expanding $d\times d$ integer matrix $D$. We
  can assume with no loss of generality that
  \eq{eq: gcd}{\gcd(r_1, \ldots, r_k) = 1;}
  indeed, if this does  not hold, we can replace $D$ with a power of
  $D$ and (using Lemma \ref{lem: equidistribution of powers}) reduce to the situation in
which \equ{eq: gcd} holds. Write
$$D_i \df  D^{r_i}, \ \ r \df \max r_i, \ \text{ and }\bar D \df D^r.$$
Again by Lemma \ref{lem: equidistribution of powers}, it suffices to show that
the $\bar{D}$-orbit of a.e. $x \in K$ is equidistributed. Computing as in \eqref{eq: D^nx}
and \eqref{eq:h_i(0)} we see that for any $b = (i_1, \ldots) \in B$ and $n \in
\mathbb{N}$, 
\eq{eq: first}{
D_{i_1} \circ \cdots \circ D_{i_n} \pi(b) = \pi(T^nb) + h_{i_n} \circ
\cdots \circ h_{i_1}(0). 
}
For each $n$ and $b$, let $\ell =\ell_{b,n} \in \mathbb{N}$ satisfy
$r(\ell-1) <r_{i_1}+\cdots+r_{i_n} \leq r\ell$, and let
$s = s_{b,n}$ so that
\eq{eq: second}{
\bar{D}^\ell = D_{i_1} \cdots D_{i_n} D^s. 
}
That is,
\eq{eq: cei}{
\ell \df \left \lceil \frac{r_{i_1}+ \cdots + r_{i_n}}{r} \right \rceil
\ \ \text{ and } \ s \df r\ell - \left(r_{i_1} + \cdots + r_{i_n}
\right) \in \{0, \ldots, r-1\}.
}
If we consider $s$ as an element of $C_r$ (i.e. consider its class
modulo $r$) then we see that $s =
\kappa(b_n \cdots b_1)$ where $\kappa$ is the 
morphism mapping $i_j$ to $-r_j$, considered as an element of $
C_r$. The morphism $\kappa$ is surjective in view of \equ{eq: gcd}.

By Corollary \ref{cor:a.s. equidistribution of trajecotries} and
Proposition \ref{prop: independence 2}, the sequence 
$$
\left( h_{i_n} \circ \cdots \circ h_{i_1}(0) , \, s_{b,n}, \,  T^nb\right) \in
\mathbb{T}^d \times C_r \times \bar B
$$
is equidistributed with respect to $\mathrm{Haar} \otimes \theta
\otimes \bar \beta$ for
$\bar \beta$-a.e. $b$.

For $b \in \bar B$ we continue to use the notation $\pi$ to denote the
map $\pi(b) = \pi(i_1, i_2, \ldots)$ where $b = ( \ldots, i_{-1},
i_0, i_1, \ldots) $.
Let $x_0 = \pi(b)$, where $b$ belongs to the subset of $\bar B$ of full
measure for which this equidistribution result holds, we have from
\equ{eq: first} and \equ{eq: second} that 
\eq{eq: two sides}{
\bar{D}^{\ell_{b,n}} (x_0)  = D^{s_{b,n}} \left(\pi(T^nb) + h_{i_n} \circ \cdots \circ h_{i_1}(0) \right).
}
We consider this sequence as a sequence depending on the index $n$,
and note that it is the image of an equidistributed sequence under the continuous map
$$\Psi: \mathbb{T}^d \times C_r \times \bar B \to \mathbb{T}^d, \ \ \Psi (x, c, b) \df
D^c( \pi(b) + x).$$
Thus it is equidistributed with respect to $\lambda$,
%Haar measure on$\mathbb{T}^d$
 as a sequence of the parameter $n$.

However, our objective is to show equidistribution of the sequence $(\bar{D}^z(x_0))_{z=1}^{\infty}$.
Note that given $b$ as above, the sequence $(\ell_{b,n})_{n\in\mathbb{N}}$ is monotonically increasing, but might have repetitions. To handle these repetitions we introduce the following notation:
for fixed $b$, and for each $n \in\mathbb{N}$, let
$$t_{b} (n)\df \left| \left\{m \in \mathbb{N} : \ell_{b,n} =
    \ell_{b,m} \right\} \right|^{-1} .$$
It is clear from \equ{eq: cei} and the definition of $r$ that
 $t_b(n)\in \left\{\frac{1}{r},\frac{2}{r},\dots,1 \right\}$. 
It is not hard to see that we can compute
$ t_b(n)$ from $s_{b,n}$ and from the symbols $\left(T^n
  b\right)_j$ where $|j| \leq 
  r$. That is, there is a continuous function $\hat{t}$ on $C_r \times
  \bar B$ such that ${ t}_b(n)= \hat{t}(s_{b,n},
  T^nb)$. 
  Now given $f \in C(\mathbb{T}^d)$, we define
  $$
F: \mathbb{T}^d \times C_r \times \bar B \ \ \text{ by } \ F(x, c, b)
\df \left( f \circ \Psi \right) \cdot \hat{t}(c, b) 
$$
so that
$$
F(x, s_{b,n}, T^nb) = \frac{f\left(D^s(\pi(T^n b)+x) \right)}{|\{m\in \mathbb{N}:
  \ell_{b,n} = \ell_{b,m}\}|}.
$$
This definition ensures that the Birkhoff sums of the two sides of
\equ{eq: two sides} satisfy
$$
\sum_{z = 0}^{L-1} f\left(\bar D^{z} x_0 \right) = \sum_{n =0}^{N_L-1}
F\left(h_{i_n} \circ \cdots \circ h_{i_1}(0)  , s_{b,n}, T^nb \right) + O(1),
$$
where $N_L=|\{m\in\mathbb{N}:\ell_{b,m}<L\}|$. Thus
\eq{eq: thus}{
\frac{1}{N_L}
\sum_{z = 0}^{L-1} f\left(\bar D^{z} x_0 \right) \underset{L \to \infty}{\longrightarrow} \int_{\mathbb{T}^d \times C_r \times \bar B} F
\, d( \mathrm{\lambda} \otimes \theta \otimes \bar{\beta}).
}
Applying this with the constant function $f \equiv 1 $ gives the
existence of the limit
$$\xi \df  \int \hat{t} \, d\theta \otimes d \bar \beta =\lim_{L
  \to \infty} \frac{L}{N_L} .$$
Dividing both sides of \equ{eq: thus} by $\xi$ and using Fubini to
compute the right hand side, we see that
$$
\frac{1}{L}
\sum_{z = 0}^{L-1} f\left(\bar D^{z} x_0 \right) \underset{L \to \infty}{\longrightarrow} \int_{\mathbb{T}^d} f 
\, d \mathrm{\lambda} ,
$$
as required.
  \end{proof}
\section{\label{sec:when the condition does not hold}When all the
  differences 
are rational.}

In this section we shall analyze the situation in which condition
\equ{eq: irrationality cond1} in Theorem \ref{thm:normal number is fractals}
does not hold. We focus on the one-dimensional case where $r_{1}=\cdots=r_{k}=1$,
thus we assume throughout this section that all the differences $t_{i}-t_{j}$
are rational. For all $i\in\Lambda$, denote $\delta_{i}\df t_{i}-t_{1}\in\mathbb{Q}$.
Note that $\delta_{1}=0$. Given $\underline{i}\in\Lambda^{\mathbb{N}}$,
recall that 
\[
\pi\left(\underline{i}\right)=\lim_{n\to\infty}f_{i_{1}}\circ\cdots\circ f_{i_{n}}\left(0\right)\in K.
\]
By equation (\ref{eq: D^nx}), for every $m\in\mathbb{N}$, 
\[
D^{m}\left(\pi\left(\underline{i}\right)\right)=\sum_{j=1}^{m}D^{j}\left(t_{1}\right)+\sum_{j=1}^{m}D^{m-(j-1)}\left(\delta_{i_{j}}\right)+\pi(T^{m}(\underline{i}))\,(\text{mod } 1 ).
\]
Denote 
\[
%\begin{split}\end{split}
\alpha_{m}\df%
\sum_{j=1}^{m}D^{j}\left(t_{1}\right)%
,\ \ \ \eta_{m}\left(\underline{i}\right)\df%
\sum_{j=1}^{m}D^{m-(j-1)}\left(\delta_{i_{j}}\right)%{split}
\]
so that 
\[
D^{m}\left(\pi\left(\underline{i}\right)\right)=\alpha_{m}+\eta_{m}\left(\underline{i}\right)+\pi(T^{m}(\underline{i}))\,(\text{mod } 1 ).
\]
Note that $\eta_{m}\left(\underline{i}\right)$ stays inside the finite
set $\Delta=\left\{ 0,\frac{1}{q},...,\frac{q-1}{q}\right\} $, where
$q$ is a common denominator for $\delta_{2},\delta_{3},...,\delta_{k}$.
Also note that $\alpha_{m}$ is a deterministic sequence (does not
depend on $\underline{i}$), and 

\[
\begin{array}{cl}
\alpha_{m} & =\sum_{j=1}^{m}D^{j}\left(t_{1}\right)\\
 & =\dfrac{D^{m+1}-D}{D-1}t_{1}\,\text{(mod 1)}\\
 & =D^{m}\dfrac{D}{D-1}\,t_{1}-\dfrac{D}{D-1}\,t_{1}\,\text{(mod 1)}.
\end{array}
\]

Recall that $\underline{i}$ is sampled according to some Bernoulli
measure on the space of symbols $\Lambda^{\mathbb{N}}$. In what follows,
we analyze $\eta_{m}\left(\underline{i}\right)$ as a Markov process
with a finite state space. Therefore, we now recall some basic properties
of such processes. A good reference for this topic is \cite{LevinPeresWilmer2009}. 

Recall that a Markov process with a finite state space $\Omega$ is
characterized by a transition matrix $P:\Omega\times\Omega\to\left[0,1\right]$,
where $P\left(\omega_{1,}\omega_{2}\right)$ indicates the probability
of moving from the state $\omega_{1}\in\Omega$ to the state $\omega_{2}\in\Omega$
in one step. The process is called \emph{irreducible} if for every
two states $\omega_{1},\,\omega_{2}\in\Omega$, there exists some
integer $n>0$, such that $P^{n}\left(\omega_{1},\omega_{2}\right)>0.$
An irreducible Markov chain is called \emph{aperiodic} if for some
(equivalently, every) $\omega\in\Omega$, 
\[
\text{g.c.d}\left\{ n>0:\,P^{n}\left(\omega,\omega\right)>0\right\} =1.
\]
It is well known that an irreducible Markov chain with a finite state
space has a unique stationary measure, and in case the process is
aperiodic as well, the distribution of the process at time $n$ converges,
as $n\to\infty$, to the unique stationary measure, regardless of
the starting point, and at an exponential rate (see e.g. \cite[Theorem 4.9]{LevinPeresWilmer2009}).
\begin{lem}
\label{lem: markov} $\eta_{m}\left(\underline{i}\right)$ is an aperiodic,
irreducible Markov process with a finite state space. 
\end{lem}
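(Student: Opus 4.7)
The plan is to extract a first-order recursion for $\eta_m$ from its definition and then read off the Markov property, finite state space, aperiodicity, and irreducibility in turn.

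A direct computation starting from $\eta_m(\underline{i}) = \sum_{j=1}^m D^{m-j+1}\delta_{i_j}$ yields
\[
\eta_{m+1}(\underline{i}) = D\bigl(\eta_m(\underline{i}) + \delta_{i_{m+1}}\bigr) \pmod{1}.
\]
Since the $i_j$'s are i.i.d., this exhibits $(\eta_m)$ as a time-homogeneous Markov chain whose one-step transition from a state $\eta$ under input $i$ is $\eta \mapsto D(\eta + \delta_i) \pmod{1}$. Finiteness of the state space is immediate by induction, since $D \in \mathbb{Z}$ and each $\delta_i$ has denominator dividing $q$ give $\eta_m \in \Delta$ for all $m$. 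For aperiodicity, the fact $\delta_1 = 0$ makes the transition with $i_{m+1} = 1$ at $\eta_m = 0$ return to $0$; so state $0$ carries a self-loop of positive probability $p_1$, forcing the communicating class containing $0$ to have period $1$.

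The substantive step is irreducibility. After multiplying by $q$, the chain is conjugate to the affine chain on $\mathbb{Z}/q\mathbb{Z}$ with transitions $\sigma \mapsto D\sigma + Dc_i$, where $c_i \df q\delta_i$. Let $S$ denote the set of states in $\mathbb{Z}/q\mathbb{Z}$ accessible from $0$ with positive probability, and let $H \leq \mathbb{Z}/q\mathbb{Z}$ be the subgroup generated by $\{Dc_i : i \in \Lambda\}$. I would first show $S = H$. The inclusion $S \subseteq H$ is a straightforward induction on the number of steps, using that every subgroup of the cyclic group $\mathbb{Z}/q\mathbb{Z}$ is of the form $m\mathbb{Z}/q\mathbb{Z}$ and is therefore automatically stable under multiplication by $D$. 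For the reverse inclusion, every element of the finite abelian group $H$ is a non-negative integer combination of the generators $\{Dc_i\}$ (by replacing any negative coefficient $n$ of $Dc_i$ with $n + N_i$, where $N_i$ is the order of $Dc_i$), and such a combination can be converted into an explicit input sequence realizing that element as a reachable state. Once $S = H$ is known to be a subgroup, irreducibility follows quickly: the increasing sets $S_n \subseteq S$ of states reachable from $0$ in exactly $n$ steps stabilize to $S = H$, so for any $\sigma \in S$ and $n$ large enough, $-D^n \sigma$ lies in $H = S_n$ and the corresponding input sequence of length $n$ sends $\sigma$ to $0$. Combined with the defining property of $S$, this shows that any two states in $S$ communicate.

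The main technical hurdle is the identification $S = H$, and in particular the reverse inclusion $H \subseteq S$, which requires keeping careful track of the powers of $D$ that appear as coefficients in the reachable sums $\sum_{\ell} D^{n-\ell+1}c_{i_\ell}$; the remaining claims follow almost immediately from the recursion and the self-loop at $0$.
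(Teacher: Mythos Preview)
Your derivation of the recursion, the finite state space, the Markov property, and aperiodicity via the self-loop at $0$ are correct and coincide with the paper's.

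For irreducibility the paper takes a much shorter route than your subgroup identification. It simply \emph{declares} the state space to be $\tilde\Delta=\{a\in\Delta:\exists m,\ \mathbb P(\eta_m=a)>0\}$, the set of states that are ever visited; accessibility from $0$ to every state is then built into the definition, and the only remaining task is to show that every $a\in\tilde\Delta$ leads back to $0$ (after which the chain restarts, since $\eta_{m+1}\mid\eta_m=0$ has the law of $\eta_1$). No algebraic description of the reachable set is attempted. (It is worth noting that the paper's specific justification for the return to $0$ --- that $q$ applications of input $1$ yield $D^q a\equiv 0$ --- is itself not valid in general, e.g.\ $D=3$, $q=2$, $a=\tfrac12$; so neither argument is fully clean as written.)

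Your subgroup approach has a genuine gap at the step $H\subseteq S$. You assert that a non-negative integer combination $\sum_i m_i\,(Dc_i)$ ``can be converted into an explicit input sequence realizing that element as a reachable state,'' but the $n$-step reachable states are exactly the sums $\sum_{\ell=1}^{n}D^{\,n-\ell+1}c_{i_\ell}$, which carry the \emph{distinct} weights $D,D^2,\dots,D^n$, not a repeated factor $D$. Converting $\sum_i m_i\,Dc_i$ into such a weighted sum is precisely the difficulty, and your own remark that this ``requires keeping careful track of the powers of $D$'' correctly locates it --- but the proposal does not carry out that tracking. A real argument here would use the eventual periodicity of $D^a\bmod q$ together with $c_1=0$ to realize arbitrary elements of $\langle c_j\rangle$ as the accumulated coefficient of each recurring value of $D^a$, and would then have to deal with the pre-periodic powers separately. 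Either supply that argument, or bypass the identification $S=H$ entirely and argue directly, as the paper intends, that every reachable state can return to $0$.
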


\begin{proof}
For each $j$ denote $\tilde{\delta}_{j}\df D\cdot\delta_{j}\in\Delta$.
Then 
\[
\eta_{m}\left(\underline{i}\right)=\sum_{j=1}^{m}D^{m-j}\left(\tilde{\delta}_{i_{j}}\right).
\]
This process may be defined by the following recurrence relation:
\[
\begin{array}{l}
\eta_{1}=\tilde{\delta}_{i_{1}}\\
\forall m\geq1,\,\eta_{m+1}=D\cdot\eta_{m}+\tilde{\delta}_{i_{m+1}}
\end{array}
\]

For every $y\in\Delta$, define the map $T_{y}:\Delta\to\Delta$,
by 
\[
T_{y}\left(x\right)=D\left(x\right)+y.
\]
Note that $T_{0}\left(x\right)=D\left(x\right)$, and the recurrence
relation may be written as 
\[
\eta_{m+1}=T_{\tilde{\delta}_{i_{m+1}}}\left(\eta_{m}\right).
\]
Denote by $\Gamma$ the semigroup generated by the functions $\left\{ T_{\tilde{\delta}_{0}},\,T_{\tilde{\delta}_{1}},\,\dots,\,T_{\tilde{\delta}_{k}}\right\} $,
and let $\tilde{\Delta}\subseteq\Delta$ be defined as $\tilde{\Delta}=\left\{ F\left(0\right):\,F\in\Gamma\right\} .$
Note that $\eta_{1}=T_{\tilde{\delta}_{i_{1}}}\left(0\right)$, and
so 
\[
\tilde{\Delta}=\left\{ a\in\Delta:\,\exists m\in\mathbb{N},\,\mathbb{P}\left(\eta_{m}=a\right)>0\right\} .
\]
Since the variables $\left(\tilde{\delta}_{i_{j}}\right)_{j=1}^{\infty}$
are IID, this is indeed a Markov process on the finite state space
$\tilde{\Delta}$. In order to show that it is irreducible, it is
enough to show that for every $F\in\Gamma$, $\exists G\in\Gamma$
s.t. $G\circ F\left(0\right)=0$. Given such $F\in\Gamma$, denote
$x=F\left(0\right)$. Note that for every $\alpha\in\Delta$, $F\left(\alpha\right)=D^{m}\left(\alpha\right)+x$
for some constant $m\in\mathbb{N}$. Since $\tilde{\Delta}$ is finite,
the orbit $\left\{ D^{l}\left(x\right):\,l\in\mathbb{N}\right\} $
is eventually periodic, and so there exist some $l,\,s$ such that
$D^{l}\left(x\right)=D^{l+s}\left(x\right)$. We may assume that $s\geq m$,
otherwise we may replace $s$ by a large enough positive integer multiple
of $s$. We denote $y=D^{l}\left(x\right)$, so that $D^{s}\left(y\right)=y$.
Now, we note that for every $w\in\mathbb{N}$, 
\[
\begin{array}{cl}
w\cdot y\,\text{(mod 1)} & =D^{s\left(w-1\right)}\left(y\right)+D^{s\left(w-2\right)}\left(y\right)+\cdots+y\\
 & =D^{l}\left(D^{s\left(w-1\right)}\left(x\right)+D^{s\left(w-2\right)}\left(x\right)+\cdots+x\right)\\
 & =T_{0}^{l}\circ\left(F\circ T_{0}^{s-m}\right)^{w-1}\left(x\right)
\end{array}
\]
And so, denoting $G=T_{0}^{l}\circ\left(F\circ T_{0}^{s-m}\right)^{w-1}$,
we have found $G\in\Gamma$ such that $G\left(x\right)=w\cdot y\,\text{(mod 1)}$.
In particular, we may find $G\in\Gamma$ such that $G\left(x\right)=q\cdot y\,\text{(mod 1)}=0$,
which proves irreducibility.

The Markov process is also aperiodic since $\mathbb{P}\left(\eta_{m+1}=0\,\bigl|\,\eta_{m}=0\right)>0$. 
\end{proof}
From Lemma \ref{lem: markov} it follows that the process $\eta_{m}$
has a unique stationary measure $p$. 
\begin{thm}
\label{thm:when all differences are rational} %Assume that $t_{i}-t_{j}\in\mathbb{Q}$
 %for every $i,j\in\Lambda$.
 Assume that $\alpha_{m}$ is equidistributed w.r.t. some measure
$\nu$ on $\mathbb{T}$. Then for $\mu_{K}$-a.e. $x\in K$, the orbit
$\left(D^{m}\left(x\right)\right)_{m=0}^{\infty}$ is equidistributed
w.r.t. the measure $\nu*p*\widetilde{\mu}_{K}$ (where $\widetilde{\mu}_{K}$
is the projection of $\mu_{K}$ to $\mathbb{T}$). 
\end{thm}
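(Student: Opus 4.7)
The plan is to deduce equidistribution of $(D^{m}x)_{m=0}^\infty$ from a joint equidistribution statement for the triple $(\alpha_m,\eta_m(\underline{i}),\pi(T^m(\underline{i})))$, viewed as taking values in $\mathbb{T}^3$ via the natural projection $\mathbb{R}\to\mathbb{T}$ on the third coordinate, with respect to the product measure $\nu\times p\times\widetilde{\mu}_K$, for $\sigma$-a.e. $\underline{i}\in\Lambda^{\mathbb{N}}$ (where $\sigma$ is the Bernoulli measure on $\Lambda^{\mathbb{N}}$ with $\pi_*\sigma=\mu_K$). Once this is in hand, the theorem follows immediately by pushing forward along the continuous addition map $\mathbb{T}^3\to\mathbb{T}$, $(a,z,k)\mapsto a+z+k$, which sends the product measure to the convolution $\nu*p*\widetilde{\mu}_K$ and the triple to $D^{m}x$.

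To prove the joint equidistribution I would test Birkhoff averages against product test functions $F(a,z,k)=g(a)h(z)\phi(k)$ with $g,\phi\in C(\mathbb{T})$ and $h:\Delta\to\mathbb{C}$; by Stone--Weierstrass this suffices. Since the $f_i$ are contractions, $\phi(\pi(T^m(\underline{i})))$ can be approximated uniformly in $\underline{i}$ by the cylinder function $\phi_L(\underline{j})\df\phi(f_{j_1}\circ\cdots\circ f_{j_L}(0))$ to within $O(\rho_0^L)$ for some $\rho_0\in(0,1)$. Setting $W_m\df g(\alpha_m)h(\eta_m)\phi_L(T^m(\underline{i}))$, the decisive feature is an \emph{independence structure} under $\sigma$: with $\mathcal{F}_m\df\sigma(i_1,\ldots,i_m)$, the factor $h(\eta_m)$ is $\mathcal{F}_m$-measurable while $\phi_L(T^m(\underline{i}))$ depends only on $(i_{m+1},\ldots,i_{m+L})$ and is therefore independent of $\mathcal{F}_m$.

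I would decompose $W_m=E[W_m]+(W_m-E[W_m])$ and handle the two pieces separately. For the first piece, Lemma~\ref{lem: markov} asserts that $(\eta_m)$ is an irreducible aperiodic Markov chain on a finite state space, so $E[h(\eta_m)]\to\int h\,dp$ exponentially fast; combined with the hypothesis that $\alpha_m$ equidistributes w.r.t.\ $\nu$, a routine Ces\`aro argument yields $\tfrac{1}{N}\sum_{m=1}^N E[W_m]\to\int g\,d\nu\cdot\int h\,dp\cdot\int\phi_L\,d\sigma$, which matches the desired limit up to an $O(\rho_0^L)$ error that I later let go to zero.

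The main obstacle is to show $\tfrac{1}{N}\sum_{m=1}^N (W_m-E[W_m])\to 0$ almost surely. My plan is to establish exponential decay of correlations $|E[(W_m-EW_m)(W_{m'}-EW_{m'})]|\leq C\rho^{|m'-m|-L}$ for some $\rho\in(0,1)$: for $m'>m+L$, first condition on $\mathcal{F}_{m'}$ to replace $\phi_L(T^{m'}(\underline{i}))$ by its mean, then condition on $\mathcal{F}_{m+L}$ and apply the uniform Markov mixing estimates $|E[h(\eta_{m'})\mid\eta_{m+L}]-\int h\,dp|\leq C'\rho^{m'-m-L}$ and $|E[h(\eta_{m'})]-\int h\,dp|\leq C'\rho^{m'}$. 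The resulting variance bound $\mathrm{Var}\bigl(\tfrac{1}{N}\sum(W_m-EW_m)\bigr)=O(1/N)$, combined with a standard Borel--Cantelli argument along the subsequence $N_k=k^2$ (with gaps handled by the deterministic bound $|W_m|\leq\|g\|_\infty\|h\|_\infty\|\phi\|_\infty$), yields the claimed almost sure convergence. A diagonal argument over a countable dense family $(g,h,\phi)$ then produces a single full-$\sigma$-measure set of $\underline{i}$ on which joint equidistribution holds, and the theorem follows.
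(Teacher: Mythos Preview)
Your argument is correct in outline, and it takes a genuinely different route from the paper's proof. The paper proceeds in two modular steps. First it invokes Corollary~\ref{cor:product of equi sequences} (itself proved via Proposition~\ref{prop:subsequence of a Markov chain} using Doob's martingale convergence theorem) to deduce that for a.e.\ $\underline{i}$ the pair $(\alpha_m,\eta_m(\underline{i}))$ is equidistributed with respect to $\nu\otimes p$, hence $\alpha_m+\eta_m(\underline{i})$ is equidistributed with respect to $\nu*p$. Second, since $\alpha_m+\eta_m(\underline{i})=h_{i_m}\circ\cdots\circ h_{i_1}(0)$ is a genuine random walk trajectory, the paper applies Proposition~\ref{prop:Simmons-Weiss-equi-of-pairs} (the Simmons--Weiss result) exactly as in the proof of Theorem~\ref{thm:normal number is fractals} to upgrade this to joint equidistribution with $T^m\underline{i}$, and then pushes forward by addition. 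By contrast, you bypass both black boxes and establish the full triple equidistribution directly, via an explicit second-moment computation: independence of $\eta_m$ from the future block $(i_{m+1},\ldots,i_{m+L})$ under the Bernoulli measure, exponential Markov mixing for $\eta_m$, a covariance bound, and a Borel--Cantelli argument along $N_k=k^2$. The paper's proof is shorter and more conceptual because it recycles existing machinery; yours is more self-contained and elementary, trading the martingale and Simmons--Weiss inputs for a bare-hands variance estimate.

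One small point to tidy: your uniform approximation of $\phi(\pi(T^m\underline{i}))$ by $\phi_L$ ``to within $O(\rho_0^L)$'' is stated for arbitrary $\phi\in C(\mathbb{T})$, but for general continuous $\phi$ the error is $\omega_\phi(C\rho_0^L)$ rather than $O(\rho_0^L)$. This is harmless---either restrict to a dense family of Lipschitz test functions, or simply carry the modulus of continuity through the argument---but it should be said precisely.
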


In order to prove Theorem \ref{thm:when all differences are rational},
we will need the following property of aperiodic, irreducible Markov
chains. The proof of the following proposition uses some of the ideas
in the proof of Proposition \ref{prop:Simmons-Weiss-equi-of-pairs},
given in \cite{SimmonsWeiss2019}. 
\begin{prop}
\label{prop:subsequence of a Markov chain}Let $x=\left(x_{1},x_{2},...\right)\in\Omega^{\mathbb{N}}$
be an aperiodic, irreducible Markov chain with a finite state space
$\Omega$ and a transition matrix $P$. Let $p$ be the unique stationary
measure for the process and let $\mu$ be the corresponding measure
on $\Omega^{\mathbb{N}}$ w.r.t. $p$ as the starting probability
for the process (i.e., $\mu\left(\left\{ \omega\in\Omega^{\mathbb{N}}:\,\omega_{1}\in A\right\} \right)=p\left(A\right)$
for every $A\subseteq\Omega$). Then for every strictly increasing
sequence of positive integers $\left(n_{k}\right)_{k=1}^{\infty}$,
for $\mu$-a.e. $x\in\Omega^{\mathbb{N}}$, the sequence $\left(T^{n_{k}}\left(x\right)\right)_{k=1}^{\infty}$
is equidistributed w.r.t. $\mu$. 
\end{prop}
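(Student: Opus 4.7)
The plan is to exploit exponential mixing of the Markov shift---available because $P$ is finite-state, irreducible, and aperiodic---together with a second-moment variance estimate and a Borel-Cantelli argument along a sparse subsequence, in the spirit of the proof of Proposition \ref{prop:Simmons-Weiss-equi-of-pairs}.

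First I would reduce the desired weak-$\ast$ equidistribution on the compact metrizable space $\Omega^{\mathbb{N}}$ to a countable collection of scalar assertions. Since cylinder sets are clopen and their indicators uniformly approximate every continuous function, it suffices to show that for each cylinder $A \subseteq \Omega^{\mathbb{N}}$,
\[
\frac{1}{N} \sum_{k=1}^{N} \mathbf{1}_A(T^{n_k} x) \longrightarrow \mu(A) \quad \text{for } \mu\text{-a.e.\ } x,
\]
and then intersect the resulting full-measure sets over the countable family of cylinders.

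Next I would invoke Perron-Frobenius: aperiodicity and irreducibility of $P$ yield a spectral gap, hence constants $\rho \in (0,1)$ and $C(A,B) > 0$ for each pair of cylinders $A, B$ such that
\[
|\mu(A \cap T^{-n}B) - \mu(A)\mu(B)| \leq C(A,B)\rho^n \quad \text{for all } n \geq 0.
\]
For a fixed cylinder $A$ and $\varphi \df \mathbf{1}_A - \mu(A)$, this bound combined with $n_j - n_k \geq j-k$ (strict monotonicity of $(n_k)$ in $\mathbb{N}$) gives
\[
\int \Bigl| \frac{1}{N} \sum_{k=1}^{N} \varphi(T^{n_k} x) \Bigr|^2 d\mu(x) \leq \frac{C'}{N^2} \sum_{j,k=1}^{N} \rho^{|j-k|} = O(1/N).
\]
Chebyshev's inequality combined with the Borel-Cantelli lemma along $N_\ell \df \ell^2$ then delivers almost-sure convergence along $(N_\ell)$, after which the boundedness $|\mathbf{1}_A| \leq 1$ and the estimate $N_{\ell+1} - N_\ell = O(\ell)$ let me interpolate to all $N$.

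The main technical obstacle is establishing the exponential mixing bound in a form sharp enough to support the variance estimate; the crucial role of the hypothesis that $(n_k)$ is strictly increasing in $\mathbb{N}$ is to collapse the double sum of mixing coefficients to a geometric series, independent of the particular subsequence. Once this is in place, the remaining Chebyshev-Borel-Cantelli and interpolation steps are routine.
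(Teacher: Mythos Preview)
Your argument is correct, but it differs from the paper's route. Both proofs start from the same input---exponential decay of correlations for cylinder functions, coming from the spectral gap of an aperiodic irreducible finite Markov chain---and both reduce to cylinder sets. From there you run a second-moment computation: the variance of the empirical average is $O(1/N)$, Chebyshev plus Borel--Cantelli along $N_\ell=\ell^2$ gives a.s.\ convergence along that subsequence, and boundedness of $\mathbf{1}_A$ handles the interpolation. The paper instead packages the exponential mixing into a martingale: it sets $M_m \df \sum_{k\geq 1}\mathbb{E}\!\left[\mathbf{1}_{[\omega]}(T^{n_k}x)-\mu([\omega])\,\middle|\,\mathcal{B}_m\right]$, where $\mathcal{B}_m$ is the $\sigma$-algebra of the first $n_m$ coordinates, checks (using the same geometric decay and $n_k-n_m\geq k-m$) that the tail of this series is uniformly bounded, and then invokes Doob's martingale convergence theorem to conclude that the centered partial sums stay bounded, hence the averages tend to zero.

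The trade-off: your approach is slightly more elementary (no martingale theory, just Chebyshev and Borel--Cantelli) but requires the extra interpolation step along a sparse subsequence; the paper's martingale argument handles all $N$ at once but needs one to verify that the infinite sum $M_m$ defines a genuine $L^1$-bounded martingale. Either way the essential content---that strict monotonicity forces $n_j-n_k\geq j-k$ and hence the mixing coefficients sum geometrically regardless of the particular subsequence---is identical.
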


\begin{proof}
For the rest of the paper, given a finite sequence $\omega=\left(\omega_{1},...,\omega_{\ell}\right)\in\Omega^{\ell}$,
we denote the corresponding cylinder set by $\left[\omega\right]\df\left\{ \xi\in\Omega^{\mathbb{N}}:\,\left(\xi_{1},...,\xi_{\ell}\right)=\left(\omega_{1},...,\omega_{\ell}\right)\right\} $.

Let $\mathcal{B}_{m}$ be the $\sigma$-algebra generated by the first
$m$ coordinates of $\Omega^{\mathbb{N}}$. Given some fixed $\omega=\left(\omega_{1},...,\omega_{\ell}\right)\in\Omega^{\ell}$
for any $\ell\in\mathbb{N}$, define 
\[
\varphi_{k,m,\omega}\df\mathbb{E}\left[\boldsymbol{1}_{\left[\omega\right]}\circ T^{n_{k}}-\mu\left(\left[\omega\right]\right)|\,\mathcal{B}_{m}\right].
\]
Note that $\boldsymbol{1}_{\left[\omega\right]}\left(T^{n_{k}}\left(x\right)\right)$
only depends on the first $n_{k}+l$ coordinates of $x$, and so for
$n_{k}\leq m-\ell$, 
\[
\varphi_{k,m,\omega}=\boldsymbol{1}_{\left[\omega\right]}\circ T^{n_{k}}-\mu\left(\left[\omega\right]\right).
\]

On the other hand, for $n_{k}>m,$ the random variable $\varphi_{k,m,\omega}$
is constant on atoms of the $\sigma$-algebra $\mathcal{B}_{m}$,
which are just the cylinders $\left[\tau\right]$ for $\tau\in\Omega^{m}$.
For each such cylinder $\left[\tau\right]$ such that $\mu\left(\left[\tau\right]\right)>0$,
by the definition of conditional expectation we have $\forall y\in\left[\tau\right],$
\[
\varphi_{k,m,\omega}\left(y\right)=\dfrac{1}{\mu\left(\left[\sigma\right]\right)}\cdot\mu\left(T^{-n_{k}}\left[\omega\right]\cap\left[\sigma\right]\right)-\mu\left(\left[\omega\right]\right).
\]
Note that $\dfrac{1}{\mu\left(\left[\sigma\right]\right)}\cdot\mu\left(T^{-n_{k}}\left[\omega\right]\cap\left[\sigma\right]\right)$
is nothing but the measure of $T^{-n_{k}}\left[\omega\right]$ conditioned
on the atom $\left[\sigma\right]$, or in a more probabilistic language,
it is
\[
\mathbb{P}\left[x\in T^{-n_{k}}\left[\omega\right]\vert x\in\left[\sigma\right]\right].
\]

Since the process is irreducible and aperiodic, there are constants
$C>0$ and $\alpha\in\left(0,1\right)$, such that for $\mu$ almost
every $x\in\Omega^{\mathbb{N}}$, 
\[
\left|\varphi_{k,m,\omega}\left(x\right)\right|\leq C\cdot\alpha^{n_{k}-m}
\]
(see e.g. \cite[Theorem 4.9]{LevinPeresWilmer2009}).

In view of the above, we see that the sum $\sum_{k=1}^{\infty}\varphi_{k,m,\omega}$
almost surely converges, and so we may define the random variables
\[
M_{m}\df\sum_{k=1}^{\infty}\varphi_{k,m,\omega}.
\]

Denote $K_{1}\left(m\right)=\max\left\{ k:\,n_{k}\leq m-l\right\} ,\,K_{2}\left(m\right)=\max\left\{ k:\,n_{k}\leq m\right\} $.
Note that $K_{1}$ and $K_{2}$ are both increasing functions of $m$
(but not strictly increasing in general). Writing $M_{m}$ as 
\[
M_{m}=\sum_{k=1}^{K_{1}\left(m\right)}\left[\boldsymbol{1}_{\left[\omega\right]}\circ T^{n_{k}}-\mu\left(\left[\omega\right]\right)\right]+\sum_{k=K_{1}\left(m\right)+1}^{K_{2}\left(m\right)}\varphi_{k,m,\omega}+\sum_{k=K_{2}\left(m\right)+1}^{\infty}\varphi_{k,m,\omega}
\]
and noting that $\left|\sum\limits _{k=K_{1}+1}^{K_{2}}\varphi_{k,m,\omega}\right|\leq\ell$,
we have 
\[
M_{m}-L\leq\sum_{k=1}^{K_{1}\left(m\right)}\left[\boldsymbol{1}_{\left[\omega\right]}\left(T^{n_{k}}x\right)-\mu\left(\left[\omega\right]\right)\right]\leq M_{m}+L
\]
for some constant $L>0$. Since $M_{m}$ is a martingale w.r.t. the
increasing sequence of $\sigma$-algebras $\left(\mathcal{B}_{m}\right)_{m\in\mathbb{N}}$,
by Doob's martingale convergence theorem, $M_{m}$ converges almost
surely to some number, which implies that for $\mu$-almost every
$x$, 
\[
\dfrac{1}{m}\sum_{k=1}^{m}\left[\boldsymbol{1}_{\left[\omega\right]}\left(T^{n_{k}}x\right)-\mu\left(\left[\omega\right]\right)\right]\underset{m\to\infty}{\longrightarrow}0.
\]
\begin{comment}
It is immediate that 
\[
\dfrac{1}{K_{1}\left(m\right)}\sum_{k=1}^{K_{1}\left(m\right)}\left[\boldsymbol{1}_{\left[\omega\right]}\left(T^{n_{k}}x\right)-\mu\left(\left[\omega\right]\right)\right]\underset{m\to\infty}{\longrightarrow}0.
\]

But note that $K_{1}\left(m+1\right)\in\left\{ K_{1}\left(m\right),\,K_{1}\left(m\right)+1\right\} $
and so putting $m$ instead of $K_{1}\left(m\right)$ is just the
same after removing repetitions in the sequence indexed $K_{1}\left(m\right)$.
\end{comment}

Since the countable family of cylinder sets generates the Borel $\sigma$-algebra
of subsets of $\Omega^{\mathbb{N}}$, we get that for $\mu$-almost
every $x$, for every function $f\in C\left(\Omega^{\mathbb{N}}\right)$,
\[
\dfrac{1}{m}\sum_{k=1}^{m}f\left(T^{n_{k}}x\right)\underset{m\to\infty}{\longrightarrow}\int fd\mu.
\]
\end{proof}
\begin{cor}
\label{cor:product of equi sequences}Let $\gamma_{m}$ be an equidistributed
sequence w.r.t. some Borel probability measure $\sigma$ on a compact
metric space $X$, and let $\left(\Omega^{\mathbb{N}},T,\mu\right)$
be as above. Then for $\mu$-a.e. $i\in\Omega^{\mathbb{N}}$, the
sequence $\left(\gamma_{m},T^{m}i\right)_{m=1}^{\infty}$ is equidistributed
w.r.t. $\sigma\otimes\mu$. 
\end{cor}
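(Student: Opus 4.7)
The plan is to reduce the equidistribution statement against general continuous test functions to a countable dense subfamily, and then establish the Birkhoff-type convergence on this subfamily via a second moment argument exploiting the exponential decay of correlations of the Markov chain.

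First, since cylinders form a clopen basis of $\Omega^{\mathbb{N}}$ and $X$ is compact second countable, by Stone--Weierstrass the algebra generated by products $f(x)\,\mathbf{1}_{[\omega]}(i)$ (with $f$ in a countable dense subset of $C(X)$ and $[\omega]$ ranging over cylinders) is uniformly dense in $C(X\times\Omega^{\mathbb{N}})$. A standard diagonal argument reduces the corollary to showing that for each such pair $(f,[\omega])$, for $\mu$-a.e.\ $i$,
\[
\frac{1}{M}\sum_{m=1}^{M} f(\gamma_{m})\,\mathbf{1}_{[\omega]}(T^{m}i)\;\longrightarrow\;\left(\int_{X} f\,d\sigma\right)\mu([\omega]).
\]

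Subtracting means, set $\bar f\df f-\int f\,d\sigma$ and $\bar g\df \mathbf{1}_{[\omega]}-\mu([\omega])$. Expanding the product $f\cdot\mathbf{1}_{[\omega]}$ splits the Birkhoff sum into four pieces: the constant piece already gives the desired limit; the piece $\frac{\mu([\omega])}{M}\sum\bar f(\gamma_{m})$ tends to $0$ by the assumed equidistribution of $(\gamma_{m})$; and the piece $\frac{\int f\,d\sigma}{M}\sum\bar g(T^{m}i)$ tends to $0$ for $\mu$-a.e.\ $i$ by Birkhoff's ergodic theorem applied to the ergodic Markov chain. The only non-trivial contribution is the cross term $S_{M}(i)\df\frac{1}{M}\sum_{m=1}^{M}\bar f(\gamma_{m})\bar g(T^{m}i)$.

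To handle $S_M$, I would compute its second moment directly. By Fubini,
\[
\mathbb{E}_{\mu}[S_{M}^{2}]=\frac{1}{M^{2}}\sum_{m,m'=1}^{M}\bar f(\gamma_{m})\bar f(\gamma_{m'})\,\mathbb{E}_{\mu}\!\left[\bar g(T^{m}i)\bar g(T^{m'}i)\right].
\]
For an aperiodic, irreducible, finite-state Markov chain, the covariance of a mean-zero cylinder function satisfies $|\mathbb{E}_{\mu}[\bar g(T^{m}i)\bar g(T^{m'}i)]|\leq C\alpha^{|m-m'|}$ for some $\alpha\in(0,1)$; this is precisely the mixing bound already appealed to in the proof of Proposition \ref{prop:subsequence of a Markov chain}. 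Combined with the uniform bound $\|\bar f\|_{\infty}<\infty$, this yields $\mathbb{E}_{\mu}[S_{M}^{2}]=O(1/M)$, so $S_{M}\to 0$ in $L^{2}(\mu)$. To upgrade to almost-sure convergence I would apply Borel--Cantelli along the sparse subsequence $M=N^{2}$, and fill in the gaps between $N^{2}$ and $(N+1)^{2}$ using that $\|\bar f\bar g\|_{\infty}$ is bounded, so each gap contributes only $O(1/N)$.

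No serious obstacle is expected: the only ingredient requiring care is the uniform exponential decay of correlations for $\bar g$, which is classical for finite-state aperiodic irreducible chains and identical to what is used in Proposition \ref{prop:subsequence of a Markov chain}; everything else is routine.
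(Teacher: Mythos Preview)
Your argument is correct, but it takes a genuinely different route from the paper. The paper does not compute second moments at all; instead it reduces to indicator test functions $\mathbf{1}_{I\times[\omega]}$, sets $A=\{m:\gamma_m\in I\}$, enumerates $A$ as $(m_k)$, and then applies Proposition~\ref{prop:subsequence of a Markov chain} to this particular subsequence to get $\frac{1}{k}\sum_{j\le k}\mathbf{1}_{[\omega]}(T^{m_j}i)\to\mu([\omega])$ a.s.; multiplying by the density $|A\cap\{1,\ldots,N\}|/N\to\sigma(I)$ gives the product limit. In other words, the paper regards Proposition~\ref{prop:subsequence of a Markov chain} (equidistribution of the chain along \emph{every} deterministic subsequence, proved there via a martingale/Doob argument) as the real engine, and the corollary falls out in a few lines. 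You instead bypass that proposition entirely and attack the joint averages directly with an $L^{2}$ bound plus Borel--Cantelli along squares, using only the exponential decay of correlations. Both proofs rest on the same mixing input; the paper's detour buys a stronger standalone statement (the subsequence proposition) of independent interest, while your direct method is more self-contained and, by testing against continuous $f$ rather than indicators of open sets, sidesteps the implicit $\sigma$-continuity-set issue that the paper's proof glosses over.
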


\begin{proof}
Recall that a Borel set $U\subset X$ is called a continuity set with
respect to the measure $\sigma$, if $\sigma(\partial U)=0$. As a
compact metric space, $X$ has a countable basis consisting only of
continuity sets (see \cite[Chapter 3.2]{KuipersNiederreiter2012}),
which we denote by $\mathcal{U}$. Given a set $I\times\left[\omega\right]\subseteq X\times\Omega^{\mathbb{N}}$,
where $I$ is a continuity set w.r.t. $\sigma$, and $\omega\in\Omega^{\ell}$
for some $\ell\in\mathbb{N}$, consider the sum 
\[
\frac{1}{N}\sum_{m=1}^{N}\boldsymbol{1}_{I\times\left[\omega\right]}\left(\gamma_{m},T^{m}i\right).
\]
Define $A=\left\{ m\in\mathbb{N}:\,\gamma_{m}\in I\right\} $ and
let $\left(m_{k}\right)_{k\in\mathbb{N}}$ be an increasing enumeration
of all elements in $A$. Since $I$ is a continuity set w.r.t. $\sigma$,
by the equidistribution of $\gamma_{m}$ we know that 
\[
\lim_{N\to\infty}\frac{\left|A\cap\left\{ 1,...,N\right\} \right|}{N}=\sigma\left(I\right).
\]
By Proposition \ref{prop:subsequence of a Markov chain}, $\left(T^{m_{k}}i\right)_{k=1}^{\infty}$
is a.s. equidistributed w.r.t. $\mu$, hence a.s. 
\[
\frac{1}{k}\sum_{j=1}^{k}\boldsymbol{1}_{\left[\omega\right]}\left(T^{m_{j}}i\right)\longrightarrow\mu\left(\left[\omega\right]\right).
\]
Denote $B=B\left(i\right)=\left\{ m\in A:\,T^{m}i\in\left[\omega\right]\right\} $.
Then 
\[
\lim_{N\to\infty}\frac{\left|B\cap\left\{ 1,...,N\right\} \right|}{\left|A\cap\left\{ 1,...,N\right\} \right|}=\mu\left(\left[\omega\right]\right)\text{ a.s.}
\]
Hence, 

\[
\begin{split} & {\displaystyle \frac{1}{N}\sum_{m=1}^{N}\boldsymbol{1}_{I\times\left[\omega\right]}\left(\gamma_{m},T^{m}i\right)}={\displaystyle \dfrac{\left|B\cap\left\{ 1,...,N\right\} \right|}{N}}\\
 & {\displaystyle =\dfrac{\left|A\cap\left\{ 1,...,N\right\} \right|}{N}\cdot\dfrac{\left|B\cap\left\{ 1,...,N\right\} \right|}{\left|A\cap\left\{ 1,...,N\right\} \right|}\underset{N\to\infty}{\longrightarrow}\text{\ensuremath{\sigma}}\left(I\right)\cdot\mu\left(\left[\omega\right]\right)}
\end{split}
\]
We have obtained that for every continuity set $I$, and every $\omega\in\bigcup\limits _{l\in\mathbb{N}}\Omega^{\ell}$,
the following holds for $\mu$-a.e. $i\in\Omega^{\mathbb{N}}$:
\begin{equation}
\frac{1}{N}\sum_{m=1}^{N}\boldsymbol{1}_{I\times\left[\omega\right]}\left(\gamma_{m},T^{m}i\right)\underset{N\to\infty}{\longrightarrow}\text{\ensuremath{\sigma}}\left(I\right)\cdot\mu\left(\left[\omega\right]\right)=\int\boldsymbol{1}_{I\times\left[\omega\right]}d(\sigma\otimes\mu).\label{eq:equidistribution of a joint sequence}
\end{equation}
Let $\mathcal{A}$ be the algebra generated by $\mathcal{U}$. $\mathcal{A}$
is countable, and it may be easily verified that all the elements
of $\mathcal{A}$ are continuity sets as well. Therefore, for $\mu$-a.e.
$i$, \eqref{eq:equidistribution of a joint sequence} holds for every
$I\in\mathcal{A}$, and $\omega\in\bigcup\limits _{l\in\mathbb{N}}\Omega^{\ell}$.
The claim now follows from \cite[Theorem 1.9]{prokhorov1956convergence}%
\begin{comment}
The countable collection $\left\{ I\times\left[\omega\right]:\,I\in\mathcal{A},\,\omega\in\bigcup\limits _{l\in\mathbb{N}}\Omega^{\ell}\right\} $
contains a basis for the product topology, and is closed under finite
intersections.
\end{comment}
.
\end{proof}

%\begin{rem}
%Corollary \ref{cor:product of equi sequences} is somewhat similar
%to Proposition \ref{prop:Simmons-Weiss-equi-of-pairs}. The difference
%between them is that in Proposition \ref{prop:Simmons-Weiss-equi-of-pairs}
%the sequence whose equidistribution is proved is a Cartesian product
%of two sequences, both of them are random sequences depending on the
%chosen (random) sequence $b\in B$, where in Corollary \ref{cor:product of equi sequences}
%the first sequence of the Cartesian product is a deterministic sequence. 
%\end{rem}

\begin{proof}[Proof of Theorem \ref{thm:when all differences are rational}]
By Corollary \ref{cor:product of equi sequences}, for $\beta$-a.e.
$i\in\Lambda^{\mathbb{N}}$, the sequence $\left(\alpha_{m},\eta_{m}\left(i\right)\right)$
is equidistributed w.r.t. $\nu\otimes p$, which implies that the sequence
$\alpha_{m}+\eta_{m}\left(i\right)$ is equidistributed
w.r.t. $\nu*p$. Using Proposition \ref{prop:Simmons-Weiss-equi-of-pairs}
exactly in the same way as in the proof of Theorem \ref{thm:normal number is fractals},
we may deduce that for $\beta$-a.e. $i\in\Lambda^{\mathbb{N}}$,
$\alpha_{m}+\eta_{m}\left(i\right)+\pi(T^{m}(i))$
is equidistributed w.r.t. the measure $\nu*p*\widetilde{\mu_{K}}$. 
\end{proof}

From Theorem \ref{thm:when all differences are rational}
we readily obtain:

\begin{proof}[Proof of Theorem \ref{thm:translations are normal to base D.}]
%If for some $k,j\in\Lambda$, $t_{k}-t_{j}\notin\mathbb{Q}$, then
%the conclusion follows from Theorem \ref{thm:normal number is fractals}.
%Otherwise, for all $j,k\in\Lambda$, $t_{j}-t_{k}\in\mathbb{Q}$.
Suppose $t_{1}$ is normal to base $D$. Then so is $\dfrac{D}{D-1}t_{1}$,
and therefore $\alpha_{m}$ is equidistributed w.r.t. Haar measure.
The conclusion now follows immediately from Theorem \ref{thm:when all differences are rational}. 
\end{proof}

\begin{comment}
\begin{rem}
Note that adding some number $\alpha\in\mathbb{R}$ to all the translations
$t_{i}$, $i\in\Lambda$, amounts to translating the attractor of
the IFS by $\dfrac{D}{D-1}\alpha$%
if $\tilde{f_{i}}=f_{i}+\alpha$, and $\tilde{t_{i}}=t_{i}+\alpha$,
then 
\[
\tilde{f_{i_{1}}}\circ\cdots\circ\tilde{f_{i_{n}}}\left(x\right)=\dfrac{1}{D^{n}}x+\sum_{j=1}^{n}\dfrac{1}{D^{j-1}}\tilde{t}_{i_{j}}=f_{i_{1}}\circ\cdots\circ f_{i_{n}}\left(x\right)+\alpha\sum_{j=0}^{n-1}\dfrac{1}{D^{j}}=f_{i_{1}}\circ\cdots\circ f_{i_{n}}\left(x\right)+\alpha\dfrac{D}{D-1}
\]

so that 
\[
\lim\tilde{f_{i_{1}}}\circ\cdots\circ\tilde{f_{i_{n}}}\left(0\right)=\lim f_{i_{1}}\circ\cdots\circ f_{i_{n}}\left(0\right)+\alpha\dfrac{D}{D-1}
\]

. Hence, Theorem \ref{thm:translations are normal to base D.} implies
for example that w.r.t. any Bernoulli measure on $\mathcal{C}+\alpha$,
where $\mathcal{C}$ is the middle thirds Cantor set and $\alpha$
is some number which is normal to base 3 (and therefore, so is $\dfrac{D}{D-1}\alpha$),
almost every number is normal to base 3. Of course, it is not hard
to think of an irrational $\alpha$ such that no number in $\mathcal{C}+\alpha$
is normal to base 3. %

For example, take an $\alpha$ whose base 3 digital expansion is something
like $0...010...01...$ for large blocks of zeros. Then for every
$x\in\mathcal{C}+\alpha$ the trajectory of the $\times3$ map rarely
leaves a very small neighborhood of $\mathbb{\mathcal{C}}$, hence
can not be equidistributed w.r.t. Haar. 
\end{rem}
\end{comment}
It is not hard to find an example in which the $t_i$ are irrational
and not normal to
base $D$, and the conclusion of Theorem \ref{thm:translations are
  normal to base D.} fails. For example, this will hold when the $t_i$
have many appearances of long strings of the digit 0 in base $D$.
Nevertheless, 
the converse to Theorem \ref{thm:translations are normal to base D.}
is also false. 
Here is a counterexample.
\begin{example}
Denote 
\[
f_{1}^{\alpha}\left(x\right) \df\frac{1}{4}x+\alpha,\ \ \
f_{2}^{\alpha}\left(x\right)\df \frac{1}{4}x+\frac{1}{2}+\alpha.
\]
Let $K_{\alpha}$ be the attractor of the IFS $\left\{ f_{1}^{\alpha},f_{2}^{\alpha}\right\} $
for a given value of $\alpha$. Note that changing $\alpha$ corresponds
to translating the fractal $K_{0}$. More precisely, $K_{\alpha}=K_{0}+c_{\alpha}$
where $c_{\alpha}=\dfrac{4}{3}\alpha$. Let $\mu_{\alpha}$ be the
$\left(\frac{1}{2},\frac{1}{2}\right)$-Bernoulli measure on $K_{\alpha}$.
Note that for all  $ n\in\mathbb{Z}$, 
\[
\hat{\mu}_{\alpha}\left(n\right)=e^{2\pi inc_{\alpha}}\hat{\mu}_{0}\left(n\right),
\]
hence $\hat{\mu}_{\alpha}\left(n\right)=0$ if and only if
$\hat{\mu}_{0}\left(n\right)=0$. 

Denoting $\Delta_{1}=0,\,\Delta_{2}=\frac{1}{2}$ and $\Lambda=\left\{ 1,2\right\} $,
the Fourier transform of $\mu_{0}$ may be calculated as follows (see
\cite[proof of Theorem 6.1]{bugeaud2012distribution}): 
\[
\begin{split}
\hat{\mu}_{0}\left(n\right)= & 
{\displaystyle \lim\limits _{N\to\infty}2^{-N}\sum\limits _{j\in\Lambda^{N}}\exp\left(2\pi in\sum_{s=1}^{N}4^{-s+1}\Delta_{j_{s}}\right)}\\
= & \lim\limits _{N\to\infty}2^{-N}\prod\limits
_{s=0}^{N-1}\left(1+\exp\left(2\pi in4^{-s}\frac{1}{2}\right)\right) .
\end{split}
\]
Therefore
\begin{equation}
\left|\hat{\mu}_{0}\left(n\right)\right|=\prod_{s=0}^{\infty}\left|\cos\left(4^{-s}\frac{1}{2}\pi n\right)\right|.\label{eq:Fourier transform as product}
\end{equation}
Using equation \ref{eq:Fourier transform as product}, we see that for
all $k, m \in \mathbb{Z}$ with $k\geq0$, we have
$\hat{\mu}_{0}\left(4^{k}\left(2m+1\right)\right)=0$. 

Let $\nu$ be the $\left(\frac{1}{2},\frac{1}{2}\right)$-Bernoulli
measure defined on the attractor of the IFS $\left\{ x\mapsto\frac{1}{4}x,\,x\mapsto\frac{1}{4}x+\frac{1}{4}\right\} $.
Analyzing $\hat{\nu}$ in the same way we analyzed $\hat{\mu}_{0}$,
we see that 
\[
\left|\hat{\nu}\left(n\right)\right|=\prod_{s=0}^{\infty}\left|\cos\left(4^{-s}\frac{1}{4}\pi n\right)\right|,
\]
and hence for all $k,m\in\mathbb{Z}$ with $k\geq0$, we have
$\hat{\nu}\left(4^{k}2\left(2m+1\right)\right)=0$. 

Since $\nu$ is ergodic for the $\times4$ map, it
has generic points. Let $t$ be a generic point for $\nu$, and
$\tilde{t} \df \frac{3}{4}t$. By equation \eqref{eq: D^nx}, we see that
for every $x\in K_{\tilde{t}}$, if $x=\lim\limits _{n\to\infty}f_{i_{1}}^{\tilde{t}}\circ\cdots\circ f_{i_{n}}^{\tilde{t}}\left(0\right)$
then for every $n\in\mathbb{N}$, 
\[
%\begin{split}
  4^{n}x %&
  = \sum\limits
  _{j=1}^{n}4^{j}\tilde{t}+\pi\left(T^{n}\left(i\right)\right) %\\ &
  = 
4^{n}\frac{4}{3}\tilde{t}-\frac{4}{3}\tilde{t}+\pi\left(T^{n}\left(i\right)\right)
 = 
4^{n}t+\pi\left(T^{n}\left(i\right)\right)-t,
%\end{split}
\]
where $\pi$ is the coding map for the IFS $\left\{ f_{1}^{\tilde{t}},f_{2}^{\tilde{t}}\right\} $.
By Corollary \ref{cor:product of equi sequences} and the computation
above, we get that for $\mu_{\tilde{t}}$-a.e. $x\in K_{\tilde{t}}$,
the orbit $\left(4^{n}x\right)_{n=1}^{\infty}$ is
equidistributed w.r.t. the measure $\nu*\mu_{\tilde{t}}$ translated
by $-t$. 
\begin{claim*} For all nonzero 
$w\in\mathbb{Z}$, there exist $k\in\mathbb{N}\cup\left\{ 0\right\} $
and 
$ m\in\mathbb{Z}$ such that either  $w=4^{k}\left(2m+1\right)$ or
$w=4^{k}\left(4m+2\right)$.  
\end{claim*}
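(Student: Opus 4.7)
The plan is to argue by considering the 2-adic valuation of $w$. Since $w \neq 0$, we may write uniquely $w = 2^j u$ with $j \in \mathbb{N} \cup \{0\}$ and $u \in \mathbb{Z}$ odd. The conclusion is then a straightforward case analysis on the parity of $j$.

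First I would handle the case in which $j$ is even, say $j = 2k$. Then $w = 4^k u$, and since $u$ is odd we can write $u = 2m+1$ for some $m \in \mathbb{Z}$. This puts $w$ into the form $4^k(2m+1)$, which is the first alternative in the claim.

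Next I would handle the case in which $j$ is odd, say $j = 2k+1$ with $k \geq 0$. Then $w = 2 \cdot 4^k \cdot u = 4^k(2u)$. Because $u$ is odd, $2u \equiv 2 \pmod 4$, so we can write $2u = 4m+2$ for some $m \in \mathbb{Z}$. Hence $w = 4^k(4m+2)$, which is the second alternative.

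There is no real obstacle here; the only thing to check is that the two forms together cover exactly the nonzero integers, which is immediate from the uniqueness of the $2$-adic decomposition. The point of the claim (in the context of the example) is that the Fourier coefficients $\hat\mu_0$ vanish on integers of the first form while $\hat\nu$ vanishes on integers of the second form, so together $\widehat{\nu * \mu_{\tilde t}}$ vanishes off $0$ on all nonzero integers and the convolution is Haar measure — hence the claim suffices for the intended application.
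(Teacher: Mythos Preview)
Your proof is correct and essentially the same as the paper's: the paper reduces to the case $4\nmid w$ (equivalent to extracting the maximal power of $4$, i.e., your $4^k$) and then splits into the odd and even cases, which is exactly your parity-of-$j$ dichotomy in the $2$-adic decomposition $w=2^j u$.
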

\begin{proof}[Proof of Claim]
Clearly, it's enough to prove the statement for the case $4\nmid w$.
If $w$ is odd, then $w=4^{0}\left(2m+1\right)$ for some $m\in\mathbb{Z}$.
Otherwise, $w$ is even, and since we assume $4\nmid w$, then
$w=4^{0}\left(4m+2\right)$ 
for some $m\in\mathbb{Z}$. 
\end{proof}
By the Claim and the analysis of the Fourier transforms of $\nu$
and $\mu_{0}$ given above, we get that for every $0\neq n\in\mathbb{Z}$,
\[
\widehat{\nu*\mu}_{\tilde{t}}\left(n\right)=
\hat{\nu}\left(n\right)\cdot\hat{\mu}_{\tilde{t}}\left(n\right)=0.  
\]
This implies that $\nu*\mu_{\tilde{t}}$ is Haar measure on $\mathbb{T}$,
and ultimately we get that for $\mu_{\tilde{t}}$-a.e. $x\in K_{\tilde{t}}$,
the orbit $\left(4^{n}x\right)_{n=1}^{\infty}$ is
equidistributed w.r.t. Haar measure on $\mathbb{T}$ although $\tilde{t}$
is not normal to base $4$. 
\end{example}

\begin{rem}
The convolution in the example above may also be viewed as
follows. The probability measure $\mu_{0}$ is the law of the random variable
$\sum_{j=1}^{\infty}4^{-j}\xi_{j}$, 
where the $\xi_{j}$ are IID variables which assume the values $0,\,2$
with probability $\frac{1}{2}$. The measure $\nu$ is the law of the random
variable $\sum_{j=1}^{\infty}4^{-j}\chi_{j}$, where the $\chi_{j}$
are IID variables which assume the values $0,\,1$ with probability
$\frac{1}{2}$. Hence, $\nu*\mu_{0}$ is the law of the random variable 
$\sum_{j=1}^{\infty}4^{-j}\chi_{j}+\sum_{j=1}^{\infty}4^{-j} \xi_{j}$.
But 
\[
\sum_{j=1}^{\infty}4^{-j}\chi_{j}+\sum_{j=1}^{\infty}4^{-j}\xi_{j}=\sum_{j=1}^{\infty}4^{-j}\left(\chi_{j}+\xi_{j}\right)
\]
and since $\chi_{j}+\xi_{j}$ are IID random variables that take the
values $0,1,2,3$ with probability $\frac{1}{4}$ each, $\nu*\mu_{0}$
is actually Haar measure on $\mathbb{T}$. Therefore, $\nu*\mu_{\tilde{t}}$
is also Haar measure. 
\end{rem}

 \bibliographystyle{abbrv}
\bibliography{all}

\end{document}